\numberwithin{equation}{section}
\theoremstyle{plain}
\newtheorem{theorem}{Theorem}
\newtheorem{lemma}[theorem]{Lemma}
\newtheorem{corollary}[theorem]{Corollary}
\theoremstyle{definition}
\newtheorem{definition}[theorem]{Definition}
\newtheorem{remark}[theorem]{Remark}
\def\beqn{\begin{equation}}
\def\eeqn{\end{equation}}
\newcommand{\remove}[1]{}
\newcommand{\BX}{{\bf X}}
\newcommand{\bt}{{\bf t}}
\def\P{\mathbb{P}}
\def\E{\mathbb{E}}
\def\V{\mathbb{VAR}}
\def\Pn{\mathcal{P}_n}
\def\cP{\mathcal{P}}
\newcommand{\reals}{{\mathbb R}}
\newcommand{\bbr}{\reals}
\newcommand{\bbn}{{\mathbb N}}
\newcommand{\X}{{\mathcal{X}}}
\newcommand{\md}{\mathrm{d}}
\newcommand{\one}{{\bf 1}}
\newcommand{\ta}{\theta}
\newcommand{\Tngamma}{S_n^{(\gamma)}}
\newcommand{\Ungamma}{U_n^{(\gamma)}}
\newcommand{\sumXkPn}{\sum_{(X_1,\dots,X_k) \in \mathcal P_{n,\neq}^k}}
\newcommand{\barrhona}{\bar \rho_{n,\alpha}}
\newcommand{\angamma}{a_n^{(\gamma)}}
\newcommand{\Bdalpha}{B_d^{(\alpha)}}
\newcommand{\Bdzeta}{B_d^{(\zeta)}}
\newcommand{\ingam}{\int_0^{\gamma R_n}}
\newcommand{\Dk}{d_{(k)}}
\newcommand{\Di}{d_{(i)}}
\newcommand{\Done}{d_{(1)}}
\newcommand{\gngamma}{g_{n,\gamma}}
\newcommand{\hngamma}{h_{n,\gamma}}
\newcommand{\rhona}{\rho_{n,\alpha}}
\newcommand{\C}{\mathcal C}
\newcommand{\BT}{{\bf T}}
\newcommand{\Ta}{\Theta}
\newcommand{\red}[1]{\textcolor{red}{\textnormal{#1}}}
\numberwithin{theorem}{section}
\begin{document}




\title[Hyperbolic Random Geometric Graph]{Sub-tree counts on hyperbolic random geometric graphs}
\author{Takashi Owada}
\address{(TO) Department of Statistics\\
Purdue University \\
West Lafayette, 47907, USA}
\email{owada@purdue.edu}

\author{D. Yogeshwaran}
\address{(DY) Theoretical Statistics and Mathematics unit\\
Indian Statistical Institute \\
Bangalore, India}
\thanks{DY's research was supported in part by DST-INSPIRE Faculty fellowship and CPDA from the Indian Statistical Institute.}
\email{d.yogesh@isibang.ac.in}

\subjclass[2010]{Primary : 60F05, 60D05 ; Secondary : 05C80, 51M10}
\keywords{Hyperbolic spaces, Random geometric graphs, Poisson point process, sub-tree counts, central limit theorem, Malliavin-Stein method.}

\normalfont

\begin{abstract}
In this article, we study the hyperbolic random geometric graph introduced recently in \citep{krioukov:papadopoulos:kitsak:vahdat:boguna:2010}. For a sequence $R_n \to \infty$, we define these graphs to have the vertex set as Poisson points distributed uniformly in balls $B(0,R_n) \subset \Bdalpha$, the $d$-dimensional Poincar\'e ball (i.e., the unit ball on $\bbr^d$ with the Poincar\'e metric $d_{\alpha}$ corresponding to negative curvature $-\alpha^2, \alpha > 0$) by connecting any two points within a distance $R_n$ according to the metric $d_{\zeta}, \zeta > 0$. Denoting these graphs by $HG_n(R_n ; \alpha, \zeta)$, we study asymptotic counts of copies of a fixed tree $\Gamma_k$ (with the ordered degree sequence $d_{(1)} \leq \ldots \leq d_{(k)}$) in $HG_n(R_n ; \alpha, \zeta)$. Unlike earlier works, we count more involved structures, allowing for $d > 2$, and in many places, more general choices of $R_n$ rather than $R_n = 2[\zeta (d-1)]^{-1}\log (n/ \nu), \nu \in (0,\infty)$. The latter choice of $R_n$ for $\alpha / \zeta > 1/2$ corresponds to the thermodynamic regime in which the expected average degree is asymptotically constant. We show multiple phase transitions in $HG_n(R_n ; \alpha, \zeta)$ as $\alpha / \zeta$ increases, i.e., the space $\Bdalpha$ becomes more hyperbolic. In particular, our analyses reveal that the sub-tree counts exhibit an intricate dependence on the degree sequence $d_{(1)},\ldots,d_{(k)}$ of  $\Gamma_k$ as well as the ratio $\alpha/\zeta$. 
Under a more general radius regime $R_n$ than that described above, we investigate the asymptotics of the expectation and variance of sub-tree counts. Moreover, we prove the corresponding central limit theorem as well. Our proofs rely crucially on a careful analysis of the sub-tree counts near the boundary using Palm calculus for Poisson point processes along with estimates for the hyperbolic metric and measure. For the central limit theorem, we use the abstract normal approximation result from \citep{last:peccati:schulte:2016} derived using the Malliavin-Stein method. 

\end{abstract}

\date{\today}
\maketitle
%
%
\section{{\bf Introduction}}
\label{sec:intro}

In this article, we shall continue the study of random geometric graphs on the $d$-dimensional Poincar\'{e} ball, a canonical model for negatively curved spaces and hyperbolic geometry (\citep{cannon:floyd:kenyon:parry:1997,Ratcliffe2006}). The random geometric graph on the Euclidean space was introduced in \citep{Gilbert1961} as a model of radio communications and since then it has been a thriving research topic in probability, statistical physics and wireless networks (see \citep{Meester1996,penrose:2003,Yukich2006,Baccelli2009,Baccelli2010,Haenggi2012}). In recent times, the study of random geometric graphs has formed the base for study of random geometric complexes and its applications to topological data analysis (see \citep{bobrowski:kahle:2014}). In its simplest form, the random geometric graph can be constructed by taking a random set of iid points $\X_n = \{X_1,\ldots,X_n\}$ on a metric space as its vertex set and placing an edge between any two distinct points within a distance $r_n$. As is to be expected, most studies of such graphs assume that the underlying metric space is Euclidean or some compact, convex Euclidean subset. But various applications, especially the newish ones in topological data analysis, necessitate studies of geometric and topological structures on $\X_n$ with more general underlying metric spaces. Such extensions to compact manifolds without a boundary have been investigated recently in \citep{bobrowski:mukherjee:2015,penrose2013}, and a crude one-line summary of these studies is that the behaviour of the graph on a ``nice'' $d$-dimensional manifold is similar to that on a $d$-dimensional Euclidean space, though the proofs and the precise mathematical assumptions are quite challenging. \\

Given that the class of ``nice'' $d$-dimensional manifolds as considered above includes $d$-dimensional spheres (having constant positive curvature) and $d$-dimensional Euclidean spaces (having zero curvature), it is natural to ask about random geometric graphs on negatively curved spaces. Such an investigation was initiated recently in \citep{krioukov:papadopoulos:kitsak:vahdat:boguna:2010} on hyperbolic spaces and even more recently in \citep{cunningham2017} on more general spaces such as Lorentzian manifolds. However, since the $d$-dimensional Poincar\'{e} ball is one of the canonical and well-understood models of non-Euclidean and non-compact spaces, we shall restrict our attention to the same. Apart from the mathematical curiosity to understand random geometric graphs on negatively curved spaces, another reason to investigate hyperbolic random graphs arise from them being good models of many complex networks  exhibiting sparsity, power-law degree distribution, small-world phenomena, and clustering. For more details, see the introductions in \citep{krioukov:papadopoulos:kitsak:vahdat:boguna:2010,gugelmann:panagiotou:peter:2012,fountoulakis2015geometrization}. This graph is sometimes also referred to as the disc model or the KPKVB model after the authors of \citep{krioukov:papadopoulos:kitsak:vahdat:boguna:2010}, but we shall use the term hyperbolic random geometric graph. Though our work is a natural successor to this literature on hyperbolic random geometric graphs, our work can be considered, in a broader sense, as an addition to the developing literature about random structures on hyperbolic spaces (see also \citep{brooks2004,Benjamini2011,Benjamini2013,Lyons2016, Lalley2014,Petri2016}).  \\ 

The rest of the article is organized as follows : In the following subsections - Sections \ref{sec:hgg} and \ref{sec:resultsample} - we informally introduce the hyperbolic random geometric graphs, present some heuristics based on simulations, give a preview of our results and also discuss the background literature. Then, in Section \ref{sec:setup}, we introduce our setup in detail, mention some basic lemmas and state all our results. This is followed by the proofs in Section \ref{sec:proof}, where we also introduce basic lemmas on the hyperbolic metric, the hyperbolic measures as well as an abstract normal approximation bound in \citep{last:peccati:schulte:2016} derived from Malliavin-Stein method. Finally, in Section \ref{sec:Appendix}, we conclude with appendices on Palm theory for the Poisson point process and comparison with Euclidean random geometric graphs. 

\subsection{Hyperbolic random graphs:} 
\label{sec:hgg}
We shall quickly introduce the Poincar\'{e} ball and the hyperbolic random geometric graphs to give a preview of our results. Though there are other models of hyperbolic spaces, they are all isometric to the Poincar\'{e} ball (\citep[Section 7]{cannon:floyd:kenyon:parry:1997}). The Poincar\'{e} $d$-ball $\Bdzeta$ with negative curvature $-\zeta^2$ is the $d$-dimensional open unit ball equipped with the Riemannian metric 
\begin{equation} \label{e:hyp.metric}
ds^2 := \frac{4}{\zeta^2} \frac{|dx|^2}{(1-|x|^2)^2} = \frac{4}{\zeta^2} \frac{dx_1^2 + \cdots + dx_d^2}{(1-x_1^2 - \cdots - x_d^2)^2} , 
\end{equation}
where $|\cdot|$ denotes the Euclidean norm. We shall denote the metric by $d := d_{\zeta}$. See Section \ref{sec:setup} for more detailed definitions. We shall use $d$ to denote both the hyperbolic metric, and the dimension of an underlying space, but the context can distinguish the two sufficiently. Though $B_d^{(\zeta)}$ is topologically the same as any open Euclidean ball, what matters to us is the metric, and this is different from that of the Euclidean one. On compact sets of the unit ball, the hyperbolic metric is equivalent to the Euclidean metric, and the differences surface only as $|x| \uparrow 1$ but in a very significant way. To get an idea of the differences with the Euclidean space, see lines and circles on the Poincar\'e disk in Figure \ref{fig:poincare}. As is evidently expected, the unit line segments near the boundary look much smaller than those closer to the center, and line segments near the center are closer to straight lines, while those near the boundary are curved. While circles are always circles, the centers of the circles closer to the boundary are far away from the respective Euclidean centers. 
\begin{figure}[!htbp]
\centering
\includegraphics[width=3in,height=3in]{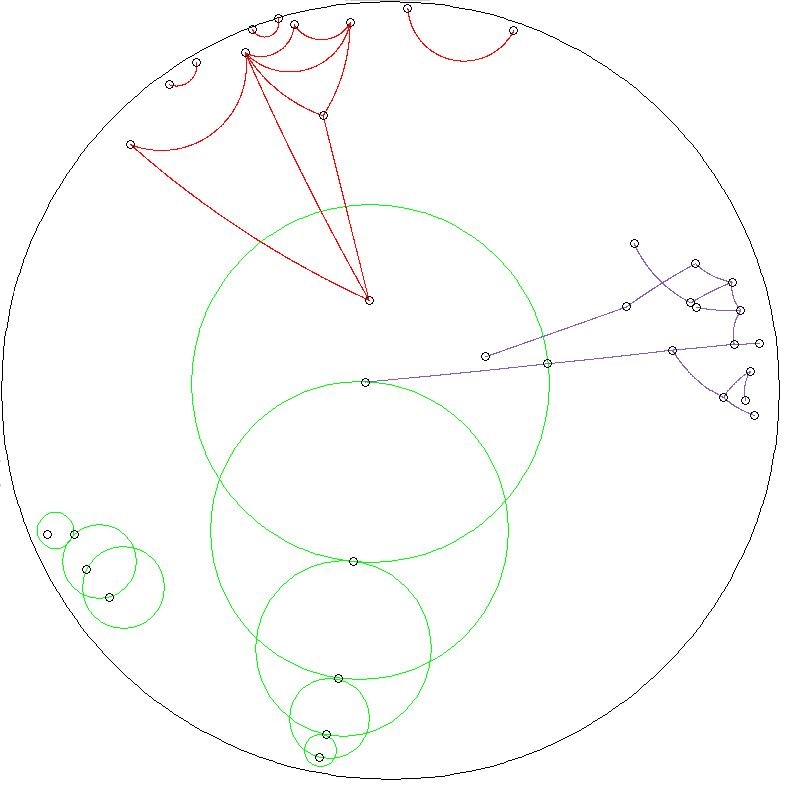} 
\caption{Geodesic line segments and triangles between points (red lines), geodesic line segments of unit length (violet lines), and unit circles with centers on the Poincar\'e disk (green circles) with $\zeta = 1$. These figures are drawn using the applet Noneuclid \citep{Noneuclid}.}
\label{fig:poincare}
\end{figure} 

Let $B(0,R)$ denote the hyperbolic ball of radius $R$ centred at the origin. In particular, if $d = 2$, the area of $B(0,R)$ is $2\pi \bigl( \cosh(\zeta R) - 1 \bigr)/\zeta^2$. Even in a higher-dimensional case, the volume of $B(0,R)$ grows exponentially in terms of the radius, and this is yet another aspect of hyperbolic spaces. Apart from the curvature parameter $-\zeta^2$, our hyperbolic random geometric graph shall involve a second curvature parameter $-\alpha^2$ of another Poincar\'{e} $d$-ball $\Bdalpha$. We shall choose a sequence of radii $R_n \to \infty$ as $n \to \infty$ and select $N_n \stackrel{d}{=}$ Poisson$(n)$ iid ``uniform" points $X_1,\ldots,X_{N_n}$ in $B(0,R_n) \subset \Bdalpha$ and project them onto $\Bdzeta$ preserving their polar coordinates. Then, we connect any two points $X_i,X_j$ if $0 < d_{\zeta}(X_i,X_j) < R_n$, i.e., we sample points uniformly in growing balls of $\Bdalpha$ and form the random geometric graph on $\Bdzeta$. We denote this random geometric graph by $HG_n(R_n ; \alpha,\zeta)$, for which there are four parameters involved : the dimension $d$, two curvature paramaters $\alpha$ and $\zeta$, and the radii regime $R_n$. We remark here that if we assume $R_n$ to be bounded by $R < \infty$, then $\Bdzeta \cap B(0,R)$ is metrically equivalent to a compact Euclidean ball, and thus, 
the asymptotics for such $HG_n(R_n ; \alpha,\zeta)$ will be very much the same as that of Euclidean random geometric graphs. For asymptotics of Euclidean random geometric graphs, see Section \ref{sec:comparison}. To illustrate the hyperbolic random geometric graph, we present five simulations for $d = 2$ in Figure \ref{fig:hgg} for different choices of $\alpha$ but with $n = 1000, \zeta = 1, R_n = 2\log 1000 = 13.82$. See also Figure \ref{fig:ergg} for simulations of two analogous Euclidean random geometric graphs.

There are few things about these figures we wish to point out.  Though $\zeta,\alpha$ are two parameters, we have fixed $\zeta = 1$ and varied $\alpha$ in our simulations. The reason for doing so is that the ratio $\alpha / \zeta$ is what matters and this will be obvious in the next subsection. It is useful to keep in mind that for small $\alpha$, the space behaves more like Euclidean in the sense that there are more points near the center which affect the asymptotics, whereas for large $\alpha$, the points near the boundary alone dominate the asymptotics. Further, by the geometry of the hyperbolic spaces, points near the center can connect easily to all the points, and so, the presence of such points changes the connectivity structure of graphs. 

One of the main characteristics of hyperbolic geometric graphs on the Poincar\'e ball is the presence of \textit{tree-like structures}, implying that the vertices on $B_d^{(\zeta)}$ are classified into large groups of smaller subgroups, which themselves consist of further smaller subgroups (see \citep{krioukov:papadopoulos:kitsak:vahdat:boguna:2010}).  This is reflected in our simulations,  indicating that there seem to be more sub-trees embedded than their Euclidean counterparts. To uncover the spatial distribution of such tree-like structures, this article will focus on sub-tree counts in the hyperbolic random geometric graph, i.e., the number of copies of a given tree $\Gamma_k$ of $k$ vertices in $HG_n(R_n ; \alpha, \zeta)$. In usual graph-theoretic language, we count the number of graph homomorphisms from $\Gamma_k$ to $HG_n(R_n ; \alpha, \zeta)$. We may notice a phase transition in the connectivity of the graph at $\alpha = 1$ due to appearance of points closer to the center. Some of the above observations that have been crystallized into rigorous mathematical theorems shall be mentioned in the next subsection, but many more still await to be explored. 
\begin{figure}[!htbp]
\centering
\caption{Simulations of $HG_{1000}(2\log 1000 = 13.82 ; \alpha, 1)$ for $d = 2$ with different $\alpha$. Isolated vertices have been omitted.}
\includegraphics[width=2.4in,height=2.4in]{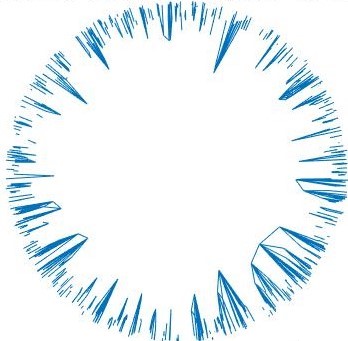} \hspace*{2.5cm}
\includegraphics[width=2.4in,height=2.3in]{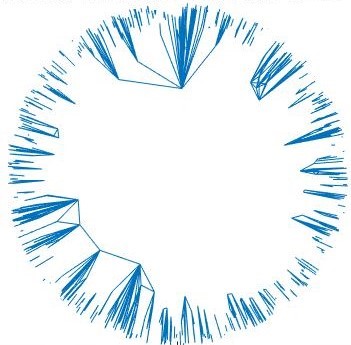} 
\caption*{$\alpha = 1.2$ \hspace*{6.9cm} $\alpha = 1.05$.}
\includegraphics[width=2.3in,height=2.3in]{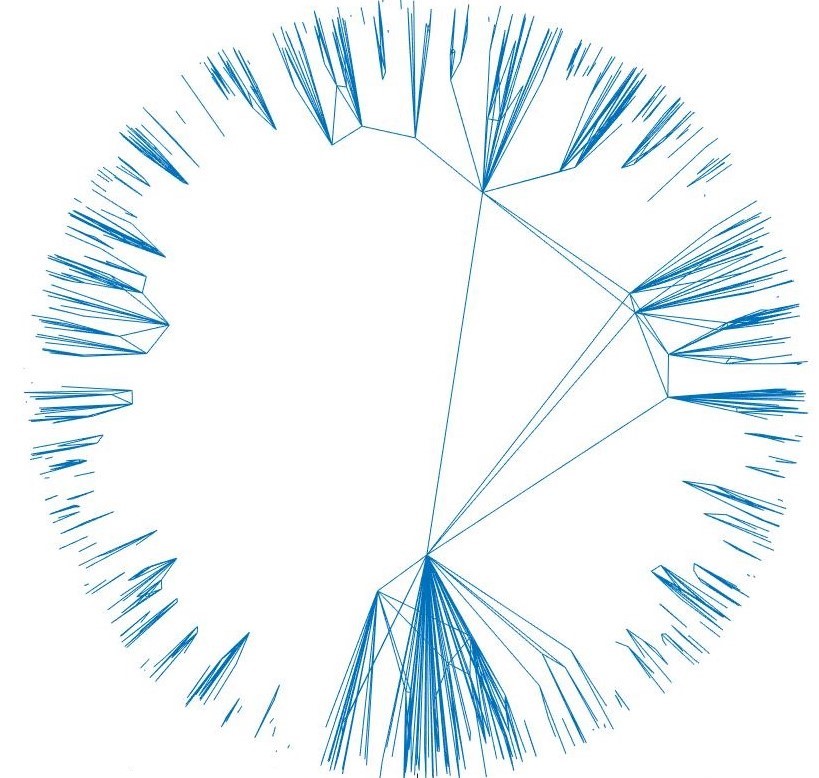} \hspace*{2.5cm}
\includegraphics[width=2.3in,height=2.3in]{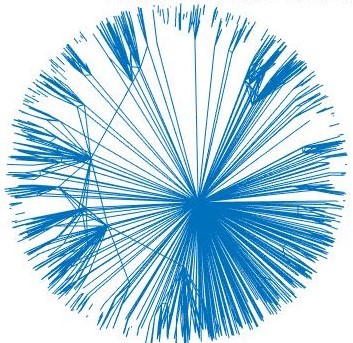} 
\caption*{$\alpha = 1$ \hspace*{6.5cm} $\alpha = .95$.}
\includegraphics[width=2.3in,height=2.3in]{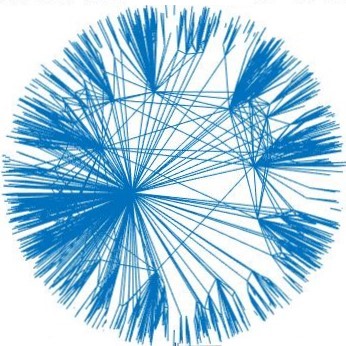} 
\caption*{$\alpha = 0.8$}
\label{fig:hgg}
\end{figure}
\subsection{A preview of our results:}
\label{sec:resultsample}
Earlier works on hyperbolic random geometric graphs (see \citep{abdullah2015,Bode2016,fountoulakis2015geometrization, candellero2016, fountoulakis2016law, fountoulakis:2012, Muller2017}) are mostly concerned with the case 
\begin{equation} 
\label{e.regime_literature}
d = 2 \ \  \text{ and } \ R_n = 2\zeta^{-1} \log (n/\nu) \ \  \text{for some }\nu \in  (0,\infty).
\end{equation}
However, many papers also consider the more general {\em binomial model}, where the probability of an edge between vertices $u,v$ is given by $(1 + \exp\{\beta \zeta (d_{\zeta}(u,v)-R_n)/2\})^{-1}$ for $\beta \in(0,\infty]$. The hyperbolic random geometric graph is a special case of the binomial model when $\beta = \infty$. Further, the binomial model under the regime \eqref{e.regime_literature} considered in the literature has been shown to be asymptotically a special case of geometric inhomogeneous random graphs (see \citep[Theorem 7.3]{Bringmann2017}). But our results cover more general radius regimes as well as apply in higher dimensions. This makes it difficult to use the existing results in \citep{Bringmann2017} about geometric inhomogeneous random graphs. In addition, in contrast to most of the existing literature, we prove second order asymptotics, i.e., variance asymptotics and central limit theorem. Though the present paper allows for more general choices of radius regime, this subsection shall restrict itself to the higher-dimensional version of \eqref{e.regime_literature}, namely
$$ R_n = 2[\zeta (d-1)]^{-1}\log (n/\nu), \ \ \nu \in (0,\infty), \ \ d \geq 2,$$ 
for the sake of an easier  presentation of our results. As for the corresponding hyperbolic random geometric graphs, we shall see that the expected average degree converges to a constant if $\alpha / \zeta > 1/2$ regardless of the dimension $d$. Such a regime can be referred to as {\em the thermodynamic regime}. We shall also find that, even in higher dimensions, the asymptotic behaviour of the hyperbolic random geometric graph is mainly determined by the ratio $\alpha/\zeta$. 

Let $\Gamma_k$ denote a tree on $k$ vertices ($k \geq 2$) with the ordered degree sequence $d_{(1)} \leq d_{(2)} \leq \ldots \leq d_{(k)}$. Our interest lies in the statistic $S_n$ which counts the number of subgraphs (not necessarily induced) in $HG_n(R_n ; \alpha, \zeta)$ isomorphic to $\Gamma_k$. We call $S_n$  {\em sub-tree counts}. Our most general results are for sub-tree counts $\Tngamma$  on $B(0,R_n) \setminus \mathring{B}(0, (1-\gamma)R_n)$ ( $\mathring{}$ denoting the interior of the set) for $\gamma \in (0,1)$. Many of our proofs proceed by deriving asymptotics for $\Tngamma$ and then approximating $S_n$ (which is nothing but $S_n^{(1)}$) by $\Tngamma$ for small enough $\gamma$. Such a strategy is very much due to the behaviour of the Poincar\'e  ball near its boundary. We assume $\alpha /\zeta$ is not a natural number for simplifying the statements of our results. An important consequence of expectation and variance asymptotics may be summarised very quickly as follows (more details are given in Section \ref{sec:specialcase}) : There exist explicit constants $C_1 := C_1(\zeta,\alpha,d,\nu,\Gamma_k), C_2 := C_2(\zeta,\alpha,d,\nu,\Gamma_k)$ such that for $\gamma \in (0,1) \setminus \{1/2\}$, \footnote{Here $a_n \sim b_n$ denotes that $a_n / b_n \to 1$ and further we use the standard Bachman-Landau big-O little-o notation.}
\begin{align*} 
 \E(\Tngamma) &\sim  C_1 n^{1 + \gamma\sum_{i=1}^k (d_{(i)} - 2\alpha/ \zeta)_+}, \\
 \V(\Tngamma) &= \Omega\Bigl( n^{1 + 2\gamma(d_{(k)} - \alpha/ \zeta)_+ + 2\gamma \sum_{i=1}^{k-1} (d_{(i)} - 2\alpha/ \zeta)_+} \, \vee \, n^{1 +\gamma \sum_{i=1}^k (d_{(i)} - 2\alpha/ \zeta)_+ } \Bigr),
\end{align*}
where $a \vee b = \max \{  a,b\}$ for $a,b\in\bbr$, and $(a)_+ = a$ if $a>0$ and $(a)_+=0$ otherwise. 
Further, for $\gamma$ small enough, the central limit theorem (CLT) holds for $\Tngamma$ as well. As for $\gamma = 1/2$, we have that
$$  \E(\Tngamma) = \Theta(n^{1 + 2^{-1}\sum_{i=1}^k (d_{(i)} -  2\alpha/ \zeta)_+}).$$
The above result (i.e., the $\gamma = 1/2$ case) for $d = 2$ was shown in \citep[Claim 5.2]{candellero:fountoulakis:2016}. \\
Now, let $\gamma \in (0,1]$ . If $2 \alpha / \zeta > d_{(k)} $, we have that as $n\to\infty$, 
\begin{align*}
\E (S_n) \sim \E(\Tngamma) \sim  C_1 n. 
\end{align*}
If $\alpha / \zeta > d_{(k)}$, then as $n \to \infty$, 
$$\V(S_n) \sim \V (\Tngamma) \sim  C_2n,$$
and also, the CLT holds for $S_n$. As a comparison, for Euclidean random geometric graphs in the thermodynamic regime, we have that $\E (S_n)  = \Theta(n), \V(S_n) = \Theta(n)$, and the central limit theorem holds as well (See Section \ref{sec:comparison}). The heuristic explanation is that a larger $\alpha / \zeta$ ratio means that the space $\Bdalpha$ is more hyperbolic relative to $\Bdzeta$ and hence contains more points in the boundary that dominate the contribution to $S_n$. In other words, $\Tngamma$ dominates the contribution to $S_n$. 

Again, if we choose $k =2$, then $S_n$ is nothing but the number of edges and $\E(S_n/2n)$ is the expected average degree.  As we see from the above expectation asymptotics, the expected average degree is $\Theta(1)$ (i.e., thermodynamic regime) if $2\alpha /\zeta > 1$. This is one of the reasons for an assumption like $\alpha /\zeta > 1/2$ in many of the earlier papers. For  $2\alpha/ \zeta > 1$, the convergence of the expected average degree to a constant  is consistent with the power-law behaviour of degree distribution with exponent $2\alpha /\zeta +1$, which itself was predicted in \citep{krioukov:papadopoulos:kitsak:vahdat:boguna:2010}. Such a power-law behaviour for degree distribution has been proven in \citep{gugelmann:panagiotou:peter:2012,fountoulakis:2012} for $d = 2$. For  $2\alpha /\zeta \leq 1$, the expected average degree grows to infinity, which is again consistent with the conjecture that the degree distribution has a power-law behaviour with exponent $2$ (\citep{krioukov:papadopoulos:kitsak:vahdat:boguna:2010}).

An interesting consequence of the behaviour controlled by $\alpha /\zeta$ is that the asymptotics for uniformly distributed Poisson points on any $\Bdzeta$ with $\alpha = \zeta$ are unaffected by changes in $\zeta$ or dimension. For example, the expected number of edges grows linearly for all $\Bdzeta, d \geq 2$ with $\alpha = \zeta$, but as for the variance, we only have the lower bound $\Omega(n)$. 

For further discussion on related results, we first refer the reader to the following table summarising some of the existing literature and our results in the special case $d = 2, \zeta = 1, R_n = 2 \log (n / \nu), \nu \in (0,\infty)$. 
\begin{table}[h]
\large
 \begin{minipage}{\textwidth}
\centering 
\begin{tabular}{|c|c|c|c|c|}
\hline
Regime / Properties & $\P_{conn}$ & $\P_{perc}$ & $\E(K_k), k \geq 2$ & $\E(\Tngamma), \gamma \in (0,1) \setminus \{1/2\}$ \\
\hline
Results from & \citep{Bode2016} & \citep{bode:fountoulakis:muller:2015} & \citep{friedrich:krohmer:2015} , \citep{candellero:fountoulakis:2016} & Corollary \ref{cor:specialcase} \\
\hline
$\alpha < 1/2$ &  $1$ & $1$ & Not Known &  $\sim n^{1 + 2\gamma (k- 1 - k \alpha)}$ \\
\hline 
$1/2 < \alpha \leq 1 - 1/k$ & $0$ & $1$ & $\Theta(n^{(1-\alpha)k})$ & $\sim n^{1 + \gamma\sum_{i=1}^k (d_{(i)} - 2\alpha)_+}$ \\
\hline 
$1 - 1/k < \alpha < 1$ & $0$ & $1$ & $\Theta(n)$ & $\sim n^{1 + \gamma\sum_{i=1}^k (d_{(i)} - 2\alpha )_+}$ \\
\hline 
$1 < \alpha < d_{(k)}/2$ & $0$ & $0$ & $\Theta(n)$ & $\sim n^{1 + \gamma\sum_{i=1}^k (d_{(i)} - 2\alpha )_+}$ \\
\hline 
$d_{(k)}/2 < \alpha$ & $0$ & $0$ & $\Theta(n)$  & $\sim n$ \\
\hline
\end{tabular}
\caption{Summary of related results for $d = 2, \zeta = 1, R_n = 2 \log (n/\nu), \nu \in (0,\infty)$. Here $K_k$ denotes the number of $k$-cliques in $HG_n(R_n ; \alpha, 1)$, $\P_{conn} = \P(\mbox{ $HG_n(R_n ; \alpha, 1)$ is connected})$ and $\P_{perc} = \P(\mbox{$HG_n(R_n ; \alpha, 1)$ percolates})$, where, by percolation, we mean existence of a giant component, i.e., a component of size $\Theta(n)$. }
\label{table:summary}
\end{minipage}
\end{table} 

In comparison to other results, our results demonstrate a completely different phase transition for sub-tree counts  in the sense that it depends not just upon the size of the trees but also the degree sequence. Thus, for $1 <  \alpha / \zeta < d_{(k)}/2$, we have that sub-tree counts grow super-linear in $n$, even though $\E(K_k)$ is linear, and there is no ``giant component". Such a phenomenon is further evidence of our observation based on simulations that the hyperbolic random geometric graphs contain many ``tree-like" structures compared to its Euclidean counterpart (see Figure \ref{fig:ergg} and Section \ref{sec:comparison}). A more mathematical reason for hyperbolic random geomtric graph supporting tree-like structures is that non-amenability of negatively curved spaces are more conducive to embedding of trees compared to Euclidean spaces. 

A few words on our proofs. The expectation and variance asymptotics for $\Tngamma$ involve Palm theory for Poisson point process and various estimates for the measure and the metric on the Poincar\'e ball. The need for the tree assumption arises because the hyperbolic metric involves relative angles between points, and the relative angles in the tree-like structure exhibit sufficient independence (see Lemma \ref{l:int_angle_tree}) for our precise calculations. For the central limit theorem, we use the abstract normal approximation result (\citep{last:peccati:schulte:2016}) derived using the Malliavin-Stein method. To use this normal approximation result, we derive detailed bounds on the first order (add-one cost) and second order difference operators of the functional $\Tngamma$. As mentioned before, extending results from $\Tngamma$ to $S_n$ always involves showing that the more hyperbolic the space is, the boundary contributions dominate those arising from near the center.

We shall end the introduction with a few pointers about the wider literature on hyperbolic random geometric graphs. For more on percolation and connectivity, refer to \citep{Bode2016,candellero2016,fountoulakis2016law,Kiwi2017} and studies on typical distances and diameter can be found in \citep{abdullah2015,Muller2017,kiwi2014bound}. Spectral properties of these graphs are studied in \citep{Kiwi2016spectral}. An interesting aspect apart from those mentioned above is the similarity of this random graph model to the Chung-Lu inhomogeneous random graph model (\citep{fountoulakis2015geometrization}), and this has been exploited in \citep{fountoulakis2016law,candellero2016}.  We leave generalization of our results to the Binomial model and the geometric inhomogeneous random graphs for future work. 
\section{{\bf Our setup and results}}
\label{sec:setup}
\subsection{The Poincar\'e ball}
\label{sec:poincare}~\\
Our underlying metric space is the  \textit{$d$-dimensional Poincar\'e ball} $B_d^{(\zeta)}$, where $-\zeta^2$ represents the negative (Gaussian) curvature of the space with $\zeta > 0$, i.e.,
$$
B_d^{(\zeta)} := \bigl\{  (x_1,\dots,x_d) \in \bbr^d: x_1^2+ \cdots + x_d^2 < 1\bigr\}
$$
and is equipped with the Riemannian metric \eqref{e:hyp.metric}. We shall now mention some basic properties of this metric space, and some more properties will be stated in Section \ref{sec:hyperbolicapp}. For more details on the Poincar\'e ball, we refer the reader to \citep{Ratcliffe2006,anderson:2008} and for a quick reading, refer to \citep{cannon:floyd:kenyon:parry:1997}. 
%
In what follows, we often represent the point $x \in B_d^{(\zeta)}$ in terms of ``hyperbolic" polar coordinate; For $x \in B_d^{(\zeta)}$, we write $x = (r,\theta_1,\dots, \theta_{d-1})$, where $r\geq 0$ is the radial part of $x$ defined by 
\begin{equation*}
r = \frac{1}{\zeta} \log \frac{ 1+|x|}{ 1-|x|}
\end{equation*}
and $(\theta_1, \dots, \ta_{d-1}) \in C_d := [0,\pi]^{d-2} \times [0,2\pi)$ is the angular part of $x$. Let $d:=d_\zeta$ denote the hyperbolic distance induced by \eqref{e:hyp.metric}, then it satisfies $d(0,x)= r$. 

Using the hyperbolic polar coordinate $(r,\ta_1,\dots,\ta_{d-1})$, the metric \eqref{e:hyp.metric} can be rewritten as 
\begin{equation}
\label{e:hmetric}
ds^2 = dr^2 + \Bigl( \frac{\sinh (\zeta r)}{\zeta} \Bigr)^2 \bigl( d\theta_1^2 + \sum_{k=2}^{d-1} \prod_{i=1}^{k-1} \sin^2 \theta_i \, d\theta_k^2 \bigr), 
\end{equation}
from which we obtain the volume element :
$$
dV = \left( \frac{\sinh (\zeta r)}{\zeta} \right)^{d-1}\prod_{i=1}^{d-2} \sin^{d-i-1} \ta_i\, dr\, d\ta_1 \dots d\ta_{d-1}. 
$$
%
%
%
%
%
We now aim to  generate random points on a sequence of growing compact subsets of the Poincar\'e ball. First, we choose a deterministic sequence $R_n$, $n\geq1$, which grows to infinity as $n\to\infty$. We assume that the angular part of random points is uniformly chosen, i.e., the probability density is
\begin{equation} \label{e:angular.pdf}
\pi (\ta_1, \dots, \ta_{d-1}) = \frac{\prod_{i=1}^{d-2} \sin^{d-i-1} \theta_i}{2\prod_{i=1}^{d-1}\kappa_{d-i-1}}, \ \ \ (\ta_1,\dots,\ta_{d-1}) \in C_d,
\end{equation}
where we have set $\kappa_m= \int_0^{\pi} \sin^m \theta\, d\theta$, and trivially, $\kappa_0 = \pi$. We use the symbol $\pi$ to denote the angular density as well as the famed constant, but the context makes it clear which of them we refer to. Given another parameter $\alpha > 0$, we assume that the density of the radial part is 
\begin{equation} \label{e:radial.pdf}
\rhona(r) = \frac{\sinh^{d-1}(\alpha r)}{\int_0^{R_n} \sinh^{d-1} (\alpha s) ds}, \ \ 0 \leq r \leq R_n. 
\end{equation}
This density is described as the ratio of the surface area of $B(0,r)$ to the volume of $B(0,R_n)$, where $B(0,r)$ and $B(0,R_n)$ are both defined on $B_d^{(\alpha)}$. So, the density \eqref{e:radial.pdf} can be regarded as a uniform density (for the radial part) on $B_d^{(\alpha)}$.  Combining \eqref{e:angular.pdf} and \eqref{e:radial.pdf} together, we can generate uniform random points on the space $B(0,R_n) \subset \Bdalpha$, and then, we project all of these points onto the original Poincar\'e ball $\Bdzeta$, where we construct the hyperbolic geometric graph. The projection is such that the polar coordinates remain the same. Obviously,  \eqref{e:radial.pdf} is no longer a uniform density (for the radial part) on $\Bdzeta$, unless $\zeta = \alpha$. 

Though the probability density in \eqref{e:radial.pdf} looks a little complicated, we shall mostly resort to the following useful approximation via a suitable exponential density. Set $T:= R_n-d(0,X)$, where $X$ is a random variable with density $\rhona \times \pi$. Denote by $\barrhona (t)$, the density of $T$, i.e., 
\begin{equation}
\label{e:barrho}
\barrhona (t) = \frac{\sinh^{d-1}\bigl(\alpha (R_n-t)\bigr)}{\int_0^{R_n} \sinh^{d-1} (\alpha s) ds}, \ \ 0 \leq t \leq R_n. 
\end{equation}
In the sequel, we often denote a random variable $X$ with density $\rho_{n,\alpha} \times \pi$ by its hyperbolic polar coordinate $X=(T, \Ta)$, where $\Ta$ is  the angular part, and the radial part is described by $T$ rather than $d(0,X)$.

The approximation result below was established for $d=2$ in \citep{candellero:fountoulakis:2016}. The formal proof is given in Section \ref{sec:proof}. 
\begin{lemma} \label{l:pdf}
(i) As $n\to\infty$, we have 
$$
\barrhona (t) \leq \bigl( 1+o(1) \bigr) \alpha (d-1) e^{-\alpha (d-1)t}
$$
uniformly for $0 \leq t < R_n$. 

\noindent (ii) For every $0 < \lambda <1$, we have, as $n\to \infty$, 
$$
\barrhona (t) =\bigl( 1+o(1) \bigr) \alpha (d-1) e^{-\alpha (d-1)t}
$$ 
uniformly for $0 \leq t \leq \lambda R_n$. 
\end{lemma}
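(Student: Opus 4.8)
The plan is to compute $\barrhona(t)$ directly from its definition \eqref{e:barrho} by estimating numerator and denominator separately, and then to control the resulting ratio uniformly in $t$. First I would handle the denominator $\int_0^{R_n}\sinh^{d-1}(\alpha s)\,ds$. Writing $\sinh(\alpha s) = (e^{\alpha s}-e^{-\alpha s})/2$, we have $\sinh^{d-1}(\alpha s) = 2^{-(d-1)}e^{\alpha(d-1)s}(1-e^{-2\alpha s})^{d-1}$, so that
\begin{equation*}
\int_0^{R_n}\sinh^{d-1}(\alpha s)\,ds = \frac{2^{-(d-1)}}{\alpha(d-1)}\,e^{\alpha(d-1)R_n}\bigl(1 + o(1)\bigr)
\end{equation*}
as $n\to\infty$, since $R_n\to\infty$ forces the integral to be dominated by $s$ near $R_n$ where $(1-e^{-2\alpha s})^{d-1}\to 1$; one can make this rigorous by splitting $\int_0^{R_n} = \int_0^{\log R_n} + \int_{\log R_n}^{R_n}$, bounding the first piece by a constant and showing the second is $\frac{2^{-(d-1)}}{\alpha(d-1)}e^{\alpha(d-1)R_n}(1+o(1))$ via the elementary bounds $1-(d-1)e^{-2\alpha s}\le(1-e^{-2\alpha s})^{d-1}\le 1$.

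Next I would estimate the numerator $\sinh^{d-1}(\alpha(R_n-t))$. Using the same identity with argument $R_n-t$,
\begin{equation*}
\sinh^{d-1}\bigl(\alpha(R_n-t)\bigr) = 2^{-(d-1)}e^{\alpha(d-1)(R_n-t)}\bigl(1 - e^{-2\alpha(R_n-t)}\bigr)^{d-1}.
\end{equation*}
Dividing numerator by denominator, the factors $2^{-(d-1)}$, the constant $\alpha(d-1)$, and $e^{\alpha(d-1)R_n}$ combine to give
\begin{equation*}
\barrhona(t) = \alpha(d-1)\,e^{-\alpha(d-1)t}\cdot\frac{\bigl(1-e^{-2\alpha(R_n-t)}\bigr)^{d-1}}{1+o(1)}.
\end{equation*}
For part (i), the numerator factor $(1-e^{-2\alpha(R_n-t)})^{d-1}$ is at most $1$ for every $t\in[0,R_n)$, and the $o(1)$ in the denominator does not depend on $t$ (it comes from the denominator integral, which has no $t$ in it), so the bound $\barrhona(t)\le(1+o(1))\alpha(d-1)e^{-\alpha(d-1)t}$ holds uniformly in $t$. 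For part (ii), restricting to $0\le t\le\lambda R_n$ with $\lambda<1$ fixed, we have $R_n - t \ge (1-\lambda)R_n\to\infty$, so $e^{-2\alpha(R_n-t)}\le e^{-2\alpha(1-\lambda)R_n}\to 0$ uniformly over this range; hence $(1-e^{-2\alpha(R_n-t)})^{d-1} = 1+o(1)$ uniformly, and combined with the denominator estimate this yields the two-sided asymptotic.

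The only mildly delicate point — and the place I would be most careful — is making the $o(1)$ terms genuinely \emph{uniform} in $t$ and tracking where the constraint $\lambda<1$ is actually used. For (i) there is essentially nothing to worry about since the $t$-dependence enters only through a factor bounded by $1$; for (ii) one must note that $\lambda = 1$ would fail (when $t$ is within $O(1)$ of $R_n$ the factor $(1-e^{-2\alpha(R_n-t)})^{d-1}$ is bounded away from $1$), which is exactly why the statement excludes it. I would also double-check the denominator estimate handles all dimensions $d\ge 2$ uniformly; the binomial expansion bound $(1-x)^{d-1}\ge 1-(d-1)x$ for $x\in[0,1]$ suffices and is dimension-explicit. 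No other subtleties arise, and the proof is a short direct computation once these uniformity points are pinned down.
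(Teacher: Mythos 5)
Your proof is correct and follows essentially the same route as the paper: rewrite $\sinh$ in exponential form, isolate the denominator as $\sim\frac{2^{-(d-1)}}{\alpha(d-1)}e^{\alpha(d-1)R_n}$ (which carries no $t$-dependence, giving uniformity for free), then for (i) bound $(1-e^{-2\alpha(R_n-t)})^{d-1}\le1$ and for (ii) use $R_n-t\ge(1-\lambda)R_n\to\infty$ to make that factor $1+o(1)$ uniformly. The only cosmetic difference is that you split the integral at $\log R_n$ to justify the denominator asymptotics, whereas the paper invokes the binomial expansion directly; both are sound.
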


Now we define our main object of interest, the hyperbolic random geometric graph. The first ingredient is the Poisson point process on $\Bdzeta$. For every $n \geq1$, let $(X_i, \, i \geq 1)$ be a sequence of iid random points on $\Bdzeta$ with common density $\rhona \times \pi$. Letting $N_n$ be a Poisson random variable with mean $n$, independent of $(X_i)$, one can construct the Poisson point process $\cP_n  = \{  X_1, X_2, \dots, X_{N_n}\}$ whose intensity measure is $n (\rhona \times \pi)$. 
\begin{definition}[Hyperbolic random geometric graph]
\label{defn:HGG}
Let $R_n$ be a sequence growing to $\infty$ as $n \to \infty$. The hyperbolic random geometric graph $HG_n(R_n ; \alpha, \zeta)$ is a simple, undirected graph whose vertex set is the Poisson point process $\cP_n$ with intensity measure $n (\rhona \times \pi)$, and the edge set is $\{ (X_i,X_j) : X_i,X_j \in \cP_n, 0 < d(X_i,X_j) \leq R_n \}$, where $d$ is the hyperbolic metric on $\Bdzeta$ induced by \eqref{e:hyp.metric} or equivalently \eqref{e:hmetric}. \\
\end{definition}
%


\subsection{Expectation and variance asymptotics for sub-tree counts}~\\
\label{sec:exp_var}
We introduce the notion of graph homomorphisms to define sub-tree counts. Suppose $H$ is a simple graph on $[k]:=\{ 1,\dots,k \}$ with edge set $E(H)$. Given another simple graph $G = (V(G),E(G))$, a {\em graph homomorphism} from $H$ to $G$ refers to a function $f : [k] \to V(G)$ such that if $(i,j) \in E(H)$, then $(f(i),f(j)) \in E(G)$, i.e., the adjacency relation is preserved. We denote by $\C (H,G)$ the number of graph homomorphisms from $H$ to $G$, that is, the number of copies of $H$ in $G$. This can be represented easily as follows :
\begin{equation}
\label{e:homrep}
 \C(H,G) = \sum_{(v_1,\ldots,v_k) \in V(G)}^{\neq} \prod_{(i,j) \in E(H)} \one \bigl\{ \, (v_i,v_j) \in E(G)\, \bigr\},
\end{equation}
where $\sum^{\neq}$ denotes the sum over distinct $k$-tuples $v_1,\ldots,v_k$ and $\one \{  \cdot \}$ is an indicator function. It is worth mentioning that the subgraphs counted by $\C(H,G)$ are not necessarily induced subgraphs in $G$ isomorphic to $H$. From the above representation, it is easy to derive the following monotonicity property : If $H_1,H_2$ are simple graphs on $[k]$ such that $E(H_1) \subset E(H_2)$, then $\C(H_2,G) \leq \C(H_1,G)$. 
 
Let us return to our setup of hyperbolic random geometric graphs as in Definition \ref{defn:HGG}. If $N_n \geq k$, we denote a collection of $k$-tuples of distinct elements in $\Pn$ by 
\begin{equation}
\label{e:distinctpts}
\mathcal P_{n,\neq}^k := \bigl\{ (X_{i_1},\dots,X_{i_k}) \in \Pn^k:  i_j \in \{1,\ldots,N_n\}, \ i_j \neq i_\ell \ \mbox{for} \ j \neq \ell \bigr\}. 
\end{equation}
Set $\mathcal P_{n,\neq}^k = \emptyset$ if $N_n < k$. 
Define the annulus $D_\gamma(R_n) := B(0,R_n) \setminus \mathring{B}(0, (1-\gamma)R_n)$ for $0 < \gamma \leq 1$. Construct the hyperbolic random geometric graph on $\Pn \cap D_\gamma(R_n)$ as in Definition \ref{defn:HGG}, and we denote it as $HG_n^{(\gamma)}(R_n ; \alpha, \zeta)$.  As in Section \ref{sec:resultsample}, we set $\Gamma_k$ to be a tree on $[k]$ with edge set $E$. We exclude the trivial choice of $k = 1$, in which case, $\Gamma_k$ represents a single vertex. Our interest is in sub-tree counts $\Tngamma := \C\bigl(\Gamma_k,HG^{(\gamma)}_n(R_n; \alpha, \zeta)\bigr)$ for $\gamma \in (0,1]$. From \eqref{e:homrep}, by denoting $T_i = R_n-d(0,X_i)$, we have that sub-tree counts $\Tngamma$ can be represented as
\begin{equation}
\label{e:tngamma}
\Tngamma = \sumXkPn \prod_{(i,j) \in E} \one \bigl\{ 0<d(X_i, X_j) \leq R_n,  \ T_i, T_j \leq \gamma R_n, \}. 
\end{equation}
In particular, we write $S_n = S_n^{(1)}$. Obviously, we have $S_n = \C\bigl(\Gamma_k,HG_n(R_n; \alpha, \zeta)\bigr)$. Our  first result gives the asymptotic growth rate of $\E(\Tngamma)$ for $\gamma \in (0,1]$. 
\begin{theorem} \label{t:expectation.Tngamma}
Let $\Gamma_k$ be a tree on $k$ vertices ($k \geq 2$) with degree sequence $d_1,\ldots,d_k$ and $\Tngamma$ be the sub-tree counts as defined in \eqref{e:tngamma}.
For $ \gamma \in (0,1) \setminus \{1/2\}$, we have that as $n \to \infty$,
\begin{equation} \label{e:expectation.Tngamma}
\E \bigl( \Tngamma \bigr) \sim \Bigl( \frac{2^{d-1}}{\kappa_{d-2}}\Bigr)^{k-1} \alpha^k (d-1) \, n^k e^{-\zeta (d-1) (k-1) R_n/2} \prod_{i=1}^k \angamma(d_i),
\end{equation}
where 
\begin{equation*}
\angamma(p) := \int_0^{\gamma R_n} e^{\zeta(d-1)(p-2\alpha / \zeta)t/2} dt, \, \, p \in \mathbb{N}_+.
\end{equation*}
For $\gamma = 1/2$, we have that as $n \to \infty$,
\begin{equation}  \label{e:exp.gamma.one.half}
\E \bigl( \Tngamma \bigr) = \Theta(n^k e^{-\zeta (d-1) (k-1) R_n/2} \prod_{i=1}^k a_n^{(1/2)}(d_i)),
\end{equation}
Further, let $d_{(1)} \leq d_{(2)} \leq \ldots \leq d_{(k)}$ be the degree sequence of $\Gamma_k$ arranged in ascending order. If $2\alpha /\zeta > d_{(k)}$, then, for all $\gamma \in (0,1]$, we have that as $n\to\infty$,
\begin{align}
\E ( S_n ) & \sim \E (\Tngamma) \sim \biggl( \frac{2^{d-1}}{(d-1)\kappa_{d-2}}  \biggr)^{k-1} \alpha^k \, \prod_{i=1}^k \bigl( \alpha -\frac{\zeta d_i}{2}\bigr)^{-1} n^k e^{-\zeta (d-1) (k-1)R_n/2}. \label{e:expectation.Sn}  
\end{align}
\end{theorem}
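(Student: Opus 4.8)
The plan is to compute $\E(\Tngamma)$ via the Mecke (multivariate Palm) formula for Poisson point processes, which turns the sum over distinct $k$-tuples in \eqref{e:tngamma} into an integral of the product of edge-indicators against the $k$-fold product intensity $n^k \prod_{i=1}^k (\rhona \times \pi)(dx_i)$. First I would pass to the hyperbolic polar coordinates $x_i = (r_i, \theta^{(i)})$ and substitute $t_i = R_n - r_i$, so that the radial part of the intensity becomes $\barrhona(t_i)\,dt_i$ on $[0,\gamma R_n]$ (the constraint $T_i \le \gamma R_n$) and the angular part is the density $\pi$ of \eqref{e:angular.pdf}. The key geometric input is the estimate for the hyperbolic distance between two points near the boundary: for $x=(r,\theta)$, $y=(r',\theta')$ with $r,r'$ large, one has $d_\zeta(x,y) \approx r + r' + \tfrac{2}{\zeta}\log\sin(\Delta\theta/2)$ (up to lower-order corrections), where $\Delta\theta$ is the angle between $\theta$ and $\theta'$; hence the edge condition $d_\zeta(x_i,x_j)\le R_n$ translates, after the $t$-substitution, into a condition of the form $\Delta\theta_{ij} \le$ (something like) $c\, e^{-\zeta(R_n - t_i - t_j)/2}$. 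This is where the tree structure enters decisively: for a tree, once we root it and integrate the leaves' angular variables inward, each edge contributes one angular integral over a small cap whose radius depends only on the $t$-variables of its endpoints, and these integrals decouple — this is precisely the content of the cited Lemma~\ref{l:int_angle_tree}. Integrating the angular caps yields a factor proportional to $(\Delta\theta_{ij})^{d-1} \sim (\text{const})\, e^{-\zeta(d-1)(R_n - t_i - t_j)/2}$ per edge, i.e. $\bigl(\tfrac{2^{d-1}}{\kappa_{d-2}}\bigr)^{k-1}$ times $e^{-\zeta(d-1)(k-1)R_n/2}$ times $\prod_{(i,j)\in E} e^{\zeta(d-1)(t_i+t_j)/2}$, since a tree on $k$ vertices has $k-1$ edges.

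Next I would collect, for each vertex $i$, all the factors in which $t_i$ appears: the radial density $\barrhona(t_i)$ and, from the edges incident to $i$, a factor $e^{\zeta(d-1) d_i t_i/2}$ where $d_i$ is the degree of vertex $i$ in $\Gamma_k$ (each edge at $i$ contributes $e^{\zeta(d-1)t_i/2}$). Using Lemma~\ref{l:pdf}(ii) to replace $\barrhona(t_i)$ by $(1+o(1))\,\alpha(d-1)e^{-\alpha(d-1)t_i}$ on $[0,\lambda R_n]$, the $t_i$-integral becomes $(1+o(1))\,\alpha(d-1)\int_0^{\gamma R_n} e^{\zeta(d-1)(d_i - 2\alpha/\zeta)t_i/2}\,dt_i$, which is exactly $\alpha(d-1)\,\angamma(d_i)$ up to the $1+o(1)$; the product over $i$ of the $\alpha(d-1)$-prefactors, together with the $n^k$, the $e^{-\zeta(d-1)(k-1)R_n/2}$, and the angular constant, reproduces the right-hand side of \eqref{e:expectation.Tngamma}. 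The restriction $\gamma \neq 1/2$ matters because when $d_i - 2\alpha/\zeta$ has a definite sign the integral $\angamma(d_i)$ is dominated by one endpoint and behaves cleanly, whereas the dichotomy at the boundary of the annulus and the boundary $t=0$ must not coincide in a degenerate way; for $\gamma = 1/2$ one only gets the two-sided $\Theta$ bound \eqref{e:exp.gamma.one.half} rather than a sharp asymptotic, because the cruder Lemma~\ref{l:pdf}(i) upper bound must be used near $t = R_n/2$ where Lemma~\ref{l:pdf}(ii) no longer applies.

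For the final assertion \eqref{e:expectation.Sn} about $S_n = S_n^{(1)}$ under the hypothesis $2\alpha/\zeta > d_{(k)}$, the point is that every exponent $d_i - 2\alpha/\zeta$ is strictly negative, so $\angamma(d_i) = \int_0^{\gamma R_n} e^{\zeta(d-1)(d_i - 2\alpha/\zeta)t/2}\,dt \to \bigl(\tfrac{\zeta(d-1)}{2}(2\alpha/\zeta - d_i)\bigr)^{-1} = \tfrac{2}{\zeta(d-1)}\bigl(2\alpha/\zeta - d_i\bigr)^{-1}$ as $n\to\infty$, and this limit is the same for every $\gamma\in(0,1]$ (including $\gamma=1$). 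Hence $\E(\Tngamma) \sim \E(S_n)$ and substituting $\bigl(\tfrac{2}{\zeta(d-1)}\bigr)^k \prod_i (2\alpha/\zeta - d_i)^{-1} = \bigl(\tfrac{1}{d-1}\bigr)^k \prod_i (\alpha - \zeta d_i/2)^{-1}$ into \eqref{e:expectation.Tngamma}, with one power of $(d-1)$ absorbed by the explicit $(d-1)$ factor there, gives the stated constant $\bigl(\tfrac{2^{d-1}}{(d-1)\kappa_{d-2}}\bigr)^{k-1}\alpha^k\prod_i(\alpha - \zeta d_i/2)^{-1}$. The main obstacle, and the step requiring genuine care rather than bookkeeping, is the rigorous control of the hyperbolic-distance estimate uniformly as all $r_i \to R_n$ together with the iterated angular integration over the tree: one must show that the error terms in $d_\zeta(x_i,x_j) = r_i + r_j + \tfrac{2}{\zeta}\log\sin(\Delta\theta_{ij}/2) + (\text{error})$ do not accumulate over the $k-1$ edges, that the angular caps are genuinely of the claimed radius (with matching upper and lower bounds, needed for the $\sim$ rather than $\Theta$), and that the decoupling in Lemma~\ref{l:int_angle_tree} is valid when several edges share a vertex — this is exactly why the result is restricted to trees rather than general connected graphs, since a cycle would force a nontrivial joint constraint on the angular variables that does not factor.
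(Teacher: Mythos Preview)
Your overall architecture matches the paper's: Palm formula, polar coordinates, Lemma~\ref{l:int_angle_tree} to factor over edges, Lemma~\ref{l:int.angle} for each edge probability, Lemma~\ref{l:pdf}(ii) for the radial density, then collect by vertex degree. The constant bookkeeping at the end is correct.

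However, there are two genuine gaps.

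\textbf{First, the role of $\gamma=1/2$.} Your diagnosis is wrong: Lemma~\ref{l:pdf}(ii) is valid on $[0,\lambda R_n]$ for every $\lambda<1$, so it applies perfectly well at $\gamma=1/2$. The actual obstruction is that Lemmas~\ref{l:angle} and~\ref{l:int.angle} (the distance approximation and the resulting edge-probability asymptotic) are only valid on the set $\{t_i+t_j\le R_n-\omega_n\}$. The paper therefore splits $\E(\Tngamma)=A_n+B_n$, where $A_n$ carries the constraint $T_i+T_j\le R_n-\omega_n$ for every edge and $B_n$ is the remainder. On $A_n$ your computation goes through and produces \eqref{e:expectation.Tngamma}. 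For $B_n$ one uses the crude bound $\one\{d(X_i,X_j)\le R_n\}\le 1$ on the offending edge and shows the resulting integral is $o(A_n)$; this is where $\gamma=1/2$ is the borderline. If $\gamma<1/2$ the set $\{t_1+t_2>R_n-\omega_n,\ t_1,t_2\le\gamma R_n\}$ is eventually empty and $B_n=0$; if $\gamma>1/2$ a short computation (the quantity $D_n$ in the paper) shows $B_n=o(A_n)$; at $\gamma=1/2$ one only gets $B_n=O(A_n)$, whence the $\Theta$ in \eqref{e:exp.gamma.one.half}. You never introduce this splitting, so your argument as written silently applies Lemma~\ref{l:int.angle} outside its domain of validity whenever $\gamma>1/2$.

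\textbf{Second, the passage to $S_n$.} You argue that since $\angamma(d_i)$ converges to the same limit for every $\gamma\in(0,1]$, one may simply set $\gamma=1$ in \eqref{e:expectation.Tngamma}. But \eqref{e:expectation.Tngamma} has only been established for $\gamma\in(0,1)\setminus\{1/2\}$; at $\gamma=1$ Lemma~\ref{l:pdf}(ii) genuinely fails (it needs $\lambda<1$), and the $B_n$ analysis above also breaks down. The paper instead writes $S_n=\Tngamma+\Ungamma$ with $\Ungamma$ counting tuples having at least one vertex with $T_i>\gamma R_n$, and shows directly that $\E(\Ungamma)=o\bigl(n^k e^{-\zeta(d-1)(k-1)R_n/2}\bigr)$ under $2\alpha/\zeta>d_{(k)}$, using Lemma~\ref{l:pdf}(i) (the one-sided bound, valid on all of $[0,R_n)$) in place of (ii). This is the step where the hypothesis $2\alpha/\zeta>d_{(k)}$ is actually used to kill the contribution from the interior; your sketch does not supply it.
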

This theorem indicates that the asymptotics of $\E ( \Tngamma )$ are crucially determined by  underlying curvatures $-\zeta^2$ and $-\alpha^2$. To make the implication of the theorem more transparent, we fix $\zeta$ and think of $\alpha$ as a parameter. If $\alpha >0$ is sufficiently large, i.e., the space $\Bdalpha$ is sufficiently hyperbolic, sub-trees are asymptotically dominated by the contributions near the boundary of $B(0,R_n)$. More specifically, if $2\alpha /\zeta > d_{(k)}$, then $\angamma (d_i)$, $i=1,\dots,k$ all converge to a positive constant, and thus, the growth rate of $\E ( \Tngamma )$ does not depend on $\gamma$, implying that the spatial distribution of sub-trees is completely determined by those near the boundary of $B(0,R_n)$. In fact, for each $\gamma \in (0,1)$, the growth rate of $\E ( \Tngamma )$ coincides with that of $\E( S_n )$. 

On the other hand, if $\alpha$ becomes smaller, i.e., the space $\Bdalpha$ becomes flatter, then the spatial distribution of sub-trees begins to be affected by those scattered away from the boundary of $B(0,R_n)$. For example, if $d_{(k-1)} < 2\alpha /\zeta < \Dk $, we see that as $n \to \infty$
$$
\angamma (\Dk) \sim \frac{1}{d-1}\, \Bigl( \frac{\zeta d_{(k)}}{2} - \alpha  \Bigr)^{-1} e^{\zeta(d-1) (\Dk-2\alpha /\zeta)\gamma R_n/2} \, \, \, (\to \infty)
$$
while $\angamma (\Di)$, $i=1,\dots,k-1$, all tend to a positive constant. In this case, the growth rate of $\E( \Tngamma )$ is no longer independent of $\gamma$ and its growth rate becomes faster as $\gamma \nearrow 1$, i.e., as the inner radius of the corresponding annulus shrinks. Moreover, if $\alpha$ becomes even smaller so that $d_{(k-2)}< 2\alpha/\zeta < d_{(k-1)}$, then $\angamma (d_{(k-1)})$ also asymptotically contributes to the growth rate of $\E ( \Tngamma )$. Ultimately, if $0 < 2\alpha /\zeta < \Done$, then all of the $\angamma (d_i)$'s contribute to the growth rate of $\E ( \Tngamma )$. 


The corollary below claims that if an underlying tree $\Gamma_k$ is of the simplest form, satisfying $d_{(k)} = k-1$, $d_{(k-1)} = \cdots =d_{(1)} =1$, which represents a tree of a single root and $k-1$ leaves, then even more can be said about the asymptotics of $\log \E(  S_n)$, regardless of the values of $\alpha/\zeta$. 

\begin{corollary} \label{cor:expectation.simple}
Let $\Gamma_k$ be the tree on $k$ vertices ($k \geq 2$) with degree sequence $d_{(1)}= d_{(2)} = \cdots =  d_{(k-1)} = 1$ and $d_{(k)} = k-1$. 
Moreover, assume that $R_n$ satisfies
$$
\frac{R_n}{log n} \to c, \ \ n\to \infty, \ \ \text{for some }  c\in [0,\infty]
$$ 
(note that $c=0$ or $\infty$ is possible). Let $a \vee b = \max \{a,b  \}$ for $a,b\in \bbr$. \\
$(i)$ If $2\alpha /\zeta > k-1$,
\begin{align}
\E (  S_n) \sim \Bigl( \frac{2^{d-1}}{(d-1)\kappa_{d-2}}\Bigr)^{k-1} & \alpha^k  \, \bigl( \alpha -\zeta(k-1)/2\bigr)^{-1} \label{e:special.case.ESn}  \\
&\times \bigl( \alpha - \zeta/2 \bigr)^{-(k-1)} n^k e^{- \zeta (d-1)(k-1)R_n/2}, \ \ \ n\to\infty. \notag
\end{align}
$(ii)$ If $1 < 2\alpha/\zeta \leq k-1$, 
$$
\frac{\log \E(S_n )}{R_n \vee \log n} \to k(c\vee 1)^{-1} - \alpha(d-1  ) (1\vee c^{-1})^{-1}, \ \ \ n\to\infty.
$$
$(iii)$ If $0 < 2\alpha /\zeta \leq 1$, 
$$
\frac{\log \E(S_n )}{R_n\vee\log n} \to k(c\vee 1)^{-1}-(d-1)\bigl( \alpha k-  \zeta(k-1)/2 \bigl) (1\vee c^{-1})^{-1}, \ \ \ n\to\infty.
$$
\end{corollary}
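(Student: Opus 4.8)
The plan is to read off part (i) directly from Theorem~\ref{t:expectation.Tngamma}, and to obtain parts (ii) and (iii) by sandwiching $\log\E(S_n)$ between the annulus statistic $\Tngamma$ (for the lower bound) and a direct integral estimate of $S_n=S_n^{(1)}$ (for the upper bound); since only the logarithmic rate is asked for, constants and polynomial-in-$R_n$ factors may be discarded throughout. For part (i): the star $\Gamma_k$ has ordered degree sequence $d_{(1)}=\dots=d_{(k-1)}=1$ and $d_{(k)}=k-1$, so the hypothesis $2\alpha/\zeta>k-1$ is precisely $2\alpha/\zeta>d_{(k)}$ and \eqref{e:expectation.Sn} applies; substituting $\prod_{i=1}^k(\alpha-\zeta d_i/2)^{-1}=(\alpha-\zeta(k-1)/2)^{-1}(\alpha-\zeta/2)^{-(k-1)}$ gives \eqref{e:special.case.ESn}. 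No hypothesis on $R_n$ is needed here.

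For the lower bound in (ii)--(iii): since $HG_n^{(\gamma)}(R_n;\alpha,\zeta)$ is the subgraph of $HG_n(R_n;\alpha,\zeta)$ induced on $\Pn\cap D_\gamma(R_n)$, monotonicity of homomorphism counts gives $\Tngamma\le S_n$, hence $\E(S_n)\ge\E(\Tngamma)$ for every $\gamma\in(0,1)\setminus\{1/2\}$. I would feed the star's degrees into \eqref{e:expectation.Tngamma}, using $\prod_{i=1}^k\angamma(d_i)=\angamma(k-1)\,\angamma(1)^{k-1}$ together with the elementary estimate $\log\angamma(p)=\tfrac{\zeta(d-1)}{2}(p-2\alpha/\zeta)_+\,\gamma R_n+O(\log R_n)$ (the integrand of $\angamma(p)$ is a decaying, constant, or growing exponential according as $p<2\alpha/\zeta$, $=2\alpha/\zeta$, or $>2\alpha/\zeta$), which yields
\[
\log\E(\Tngamma)=k\log n-\tfrac{\zeta(d-1)(k-1)}{2}R_n+\tfrac{\zeta(d-1)}{2}\bigl[(k-1-2\alpha/\zeta)_++(k-1)(1-2\alpha/\zeta)_+\bigr]\gamma R_n+O(\log R_n).
\]
Dividing by $R_n\vee\log n$, letting $n\to\infty$ along $R_n\sim c\log n$ (the $n^k$ term contributes $k(c\vee1)^{-1}$ and every $R_n$-exponential contributes $(1\vee c^{-1})^{-1}$ times its rate, with $c=0$, $c=\infty$ handled separately), and finally letting $\gamma\uparrow1$ through $(1/2,1)$, one gets $\liminf_n\log\E(S_n)/(R_n\vee\log n)\ge k(c\vee1)^{-1}-\beta(1\vee c^{-1})^{-1}$, where $\beta=\alpha(d-1)$ in regime (ii) (there $(1-2\alpha/\zeta)_+=0$) and $\beta=(d-1)\bigl(\alpha k-\zeta(k-1)/2\bigr)$ in regime (iii) --- exactly the claimed right-hand sides.

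For the matching upper bound I would work with $S_n=S_n^{(1)}$ directly. By Mecke's formula, writing $x_1$ for the vertex of degree $k-1$,
\[
\E(S_n)=n^k\int_{B(0,R_n)^k}\ \prod_{j=2}^{k}\one\{0<d(x_1,x_j)\le R_n\}\ \prod_{i=1}^k(\rhona\times\pi)(dx_i).
\]
Because $\Gamma_k$ is a tree the angular integrations of the leaves decouple (cf.\ Lemma~\ref{l:int_angle_tree}), and the hyperbolic law of cosines with the angular-measure estimate of Section~\ref{sec:hyperbolicapp} gives the crude bound $\P\bigl(0<d(X_i,X_j)\le R_n\mid d(0,X_i)=r_i,\,d(0,X_j)=r_j\bigr)\le C\,e^{\zeta(d-1)(R_n-r_i-r_j)/2}$, valid for all $r_i,r_j\in[0,R_n]$ even where the right side exceeds $1$. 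Inserting this, bounding the radial density uniformly via Lemma~\ref{l:pdf}(i) ($\barrhona(t)\le(1+o(1))\alpha(d-1)e^{-\alpha(d-1)t}$), and integrating leaves first then the root, each leaf contributes a factor $(1+o(1))C\alpha(d-1)e^{-\zeta(d-1)r_1/2}a_n^{(1)}(1)$ and the root a factor $(1+o(1))\alpha(d-1)e^{-\zeta(d-1)(k-1)R_n/2}a_n^{(1)}(k-1)$, so
\[
\E(S_n)\le(1+o(1))\,C^{k-1}\alpha^k(d-1)^k\,n^k\,e^{-\zeta(d-1)(k-1)R_n/2}\,a_n^{(1)}(k-1)\,a_n^{(1)}(1)^{k-1}.
\]
Applying $\log a_n^{(1)}(p)=\tfrac{\zeta(d-1)}{2}(p-2\alpha/\zeta)_+R_n+O(\log R_n)$, the logarithm of the right side is $k\log n-\beta R_n+O(\log R_n)$ with the \emph{same} $\beta$ as the $\gamma\uparrow1$ limit above, giving $\limsup_n\log\E(S_n)/(R_n\vee\log n)\le k(c\vee1)^{-1}-\beta(1\vee c^{-1})^{-1}$; together with the lower bound this proves (ii) and (iii).

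The step I expect to be the main obstacle is making the upper bound rigorous at $\gamma=1$, which Theorem~\ref{t:expectation.Tngamma} does not cover: with $\gamma=1$ the domain $B(0,R_n)$ contains points near the origin, and there the bound $C\,e^{\zeta(d-1)(R_n-r_i-r_j)/2}$ is wildly lossy (when $r_i+r_j\le R_n$ one already has $d(X_i,X_j)\le d(0,X_i)+d(0,X_j)\le R_n$, so the true probability is $1$). One must therefore verify that this loss costs at most a sub-exponential factor in $R_n$ --- so that it is invisible on the logarithmic scale --- which is exactly why the \emph{uniform} form of Lemma~\ref{l:pdf}(i) and the tree structure (decoupling of angular integrations) are indispensable; the remaining work, namely tracking the four regimes $c\in(0,\infty)$, $c=0$, $c=\infty$ when passing to the limit, is routine bookkeeping.
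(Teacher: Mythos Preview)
Your approach to part~(i) and to the lower bound in (ii)--(iii) matches the paper exactly: (i) is read off from \eqref{e:expectation.Sn}, and the lower bound comes from $\E(S_n)\ge\E(\Tngamma)$ together with the asymptotics \eqref{e:expectation.Tngamma}, letting $\gamma\nearrow1$.

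For the upper bound in (ii)--(iii) the paper takes a genuinely different and simpler route. Rather than attempting a direct integral bound on $S_n=S_n^{(1)}$ via a uniform angular estimate, the paper writes $S_n=\Tngamma+\Ungamma$ and bounds $\E(\Ungamma)$ by \emph{dropping all connectivity constraints}: for the star,
\[
\E(\Ungamma)=n^k\,\P\bigl(d(X_1,X_i)\le R_n,\ i=2,\dots,k,\ T_i>\gamma R_n\text{ for some }i\bigr)\le n^k\,k\,\P(T_1>\gamma R_n)\le C^*\,n^k e^{-\alpha(d-1)\gamma R_n}R_n,
\]
using only Lemma~\ref{l:pdf}(i). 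This gives $\limsup_n \log\E(\Ungamma)/(R_n\vee\log n)\le k(c\vee1)^{-1}-\alpha(d-1)\gamma(1\vee c^{-1})^{-1}$; combining with the $\Tngamma$ asymptotics and sending $\gamma\nearrow1$ yields the claimed limits. The advantage is that no angular estimate near the center is ever needed.

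Your upper bound argument is also correct in spirit but has the small gap you anticipate. The uniform bound $\P(d(X_i,X_j)\le R_n\mid t_i,t_j)\le C\,e^{-\zeta(d-1)(R_n-t_i-t_j)/2}$ is justified by Lemma~\ref{l:int.angle} only on $\{t_i+t_j\le R_n-\omega_n\}$; on $\{t_i+t_j\ge R_n\}$ it holds trivially for $C\ge1$, but on the thin strip $\{R_n-\omega_n<t_i+t_j<R_n\}$ the right side can drop below~$1$ and the inequality is not immediate from the paper's lemmas. The fix is easy --- bound the probability by~$1$ on that strip and observe that its $\barrhona^{\otimes2}$-measure is $O(R_n e^{-\alpha(d-1)(R_n-\omega_n)})$, hence subexponential --- but the paper's cruder splitting avoids the issue entirely.
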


Having described the expectation asymptotics, our ultimate goal is to establish the CLT for the sub-tree counts $\Tngamma$ for $0 <\gamma \leq 1$. Before CLT, it is important to investigate the variance asymptotics. The theorem below provides an asymptotic lower bound for $\V(\Tngamma)$ up to a constant factor.  As expected, we shall see that the lower bound of $\V(\Tngamma)$ also depends on the ratio $\alpha /\zeta$. Similarly to Theorem \ref{t:expectation.Tngamma}, if $\alpha /\zeta > \Dk$, the lower bound of $\V(\Tngamma)$ is independent of $\gamma$, whereas it depends on $\gamma$ when $\alpha /\zeta \leq \Dk$. Furthermore, if $\alpha /\zeta > \Dk$, we are able to establish the exact growth rate of $\V(S_n)$. In what follows, $C^*$ denotes a generic positive constant, which may vary between lines and does not depend on $n$. 
\begin{theorem}  \label{t:variance.Tngamma}
Let $\Gamma_k$ be a tree on $k$ vertices ($k \geq 2$) with degree sequence $d_1,\ldots,d_k$ and $\Tngamma$ be the sub-tree counts as defined in \eqref{e:tngamma}. For $0 < \gamma <1$,  
\begin{align}
\V(\Tngamma) &= \Omega \biggl( n^{2k-1}e^{-\zeta(d-1)(k-1)R_n} \angamma(2d_{(k)})\,  \prod_{i=1}^{k-1} \angamma (d_{(i)})^2
\, \vee \, n^k e^{-\zeta(d-1) (k-1)R_n/2} \prod_{i=1}^k \angamma (d_i) \biggr). \label{e:variance.Tngamma} 
\end{align}
Suppose further that $\alpha /\zeta > \Dk$, then, for all $\gamma \in (0,1]$, we have that
\begin{equation}  \label{e:variance.Sn}
\V(S_n)\sim \V(\Tngamma) \sim  C^* \Bigl[ n^{2k-1}e^{-\zeta(d-1)(k-1)R_n} \vee  n^k e^{-\zeta (d-1) (k-1)R_n/2}  \Bigr], \ \ n\to\infty.
\end{equation}
\end{theorem}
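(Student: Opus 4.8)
The plan is to obtain the lower bound \eqref{e:variance.Tngamma} from a second--moment decomposition, and the matching upper bound \eqref{e:variance.Sn} (under the extra hypothesis $\alpha/\zeta>\Dk$) from the Poincar\'e inequality for Poisson functionals, and finally to transfer the estimate from $\Tngamma$ to $S_n$.

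\textbf{Lower bound.} I would begin from a second--moment decomposition. By the multivariate Mecke formula, the contribution to $\E[(\Tngamma)^2]$ from pairs of copies of $\Gamma_k$ in $HG_n^{(\gamma)}(R_n;\alpha,\zeta)$ sharing no vertex equals $\E[\Tngamma]^2$ exactly (a Fubini computation, using that the intensity $n(\barrhona\times\pi)$ has no atoms). Hence $\V(\Tngamma)=\E[(\Tngamma)^2]-\E[\Tngamma]^2$ is the sum, over $j=1,\dots,k$, of the \emph{non--negative} quantities $\E[M_j]$, where $M_j$ counts ordered pairs of copies of $\Gamma_k$ sharing exactly $j$ vertices; so $\V(\Tngamma)\ge\max_j\E[M_j]$. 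For the first term of \eqref{e:variance.Tngamma} I bound $\E[M_1]$ below by keeping only those pairs whose common vertex is a vertex of \emph{maximal} degree $\Dk$ in \emph{both} copies; such a vertex then lies on $2\Dk$ pairwise distinct edges, and integrating out the $2k-1$ points via Mecke—exactly as in the proof of Theorem \ref{t:expectation.Tngamma}, using Lemma \ref{l:pdf} for $\barrhona$ and the relative--angle independence of Lemma \ref{l:int_angle_tree}—this is $\gtrsim C^*n^{2k-1}e^{-\zeta(d-1)(k-1)R_n}\big(\int_0^{\gamma R_n}e^{\zeta(d-1)\Dk t}\,\barrhona(t)\,\md t\big)\prod_{i=1}^{k-1}\angamma(d_{(i)})^2$; since $\zeta\Dk-\alpha=\tfrac\zeta2(2\Dk-2\alpha/\zeta)$ one has $\int_0^{\gamma R_n}e^{\zeta(d-1)\Dk t}\,\barrhona(t)\,\md t\sim\alpha(d-1)\,\angamma(2\Dk)$, giving the first term. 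For the second term I bound $\E[M_k]$ below by the diagonal pairs (each copy with itself), which contribute exactly $\E[\Tngamma]=\Theta\big(n^ke^{-\zeta(d-1)(k-1)R_n/2}\prod_{i=1}^k\angamma(d_i)\big)$ by Theorem \ref{t:expectation.Tngamma}. This establishes \eqref{e:variance.Tngamma}.

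\textbf{Upper bound under $\alpha/\zeta>\Dk$.} Here I would invoke the Poincar\'e inequality for Poisson functionals, $\V(S_n)\le\E\int_{\Bdzeta}(D_xS_n)^2\,n(\barrhona\times\pi)(\md x)$, with $D_xS_n=S_n(\cP_n\cup\{x\})-S_n(\cP_n)$. Expanding $(D_xS_n)^2$ as a sum over ordered pairs of copies of $\Gamma_k$ both containing $x$ and applying Mecke produces a finite sum of integrals indexed by the number $j\in\{1,\dots,k\}$ of vertices the two copies share (always including $x$). Such a pair has $2k-j$ vertices and, since the common edges of two copies of a tree form a forest on the shared vertices, at least $2k-j-1$ distinct edges; with the hyperbolic metric/edge--probability estimates from the proof of Theorem \ref{t:expectation.Tngamma} and Lemma \ref{l:pdf}(i), the corresponding integral is $O\big(n^{2k-j}e^{-\zeta(d-1)(2k-j-1)R_n/2}\big)$ \emph{provided} every radial integral $\int_0^{R_n}e^{\zeta(d-1)pt/2}\,\barrhona(t)\,\md t$ occurring in it is $O(1)$. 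The largest exponent that appears is $p=2\Dk$ (a shared vertex can have degree $\Dk$ in both copies), and $\int_0^{R_n}e^{\zeta(d-1)\Dk t}\,\barrhona(t)\,\md t$ is bounded precisely when $\alpha/\zeta>\Dk$—this is where, and how sharply, the hypothesis is used. Writing $m=2k-j\in\{k,\dots,2k-1\}$, the exponent of $n^me^{-\zeta(d-1)(m-1)R_n/2}$ is affine in $m$, so the maximum over this range is attained at $m=k$ or $m=2k-1$, giving $\V(S_n)=O\big(n^ke^{-\zeta(d-1)(k-1)R_n/2}\vee n^{2k-1}e^{-\zeta(d-1)(k-1)R_n}\big)$. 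The same estimate applied to $\Tngamma$ (restricting the $x$--integral and the Mecke integrations to $D_\gamma(R_n)$) gives the same upper bound, while \eqref{e:variance.Tngamma} specialized to $\alpha/\zeta>\Dk$ (where every $\angamma(\cdot)=\Theta(1)$) is of the matching order; for $S_n$ itself the lower bound follows by re--running the $\E[M_1]$ and $\E[M_k]$ arguments with $\gamma=1$, restricting the radial coordinate of the shared vertex to $[0,R_n/2]$ so that Lemma \ref{l:pdf}(ii) applies and the relevant integrals tend to positive constants (using also $\V(S_n)\ge\E[S_n]$ and \eqref{e:expectation.Sn}). Hence $\V(S_n)$ and $\V(\Tngamma)$ are both $\Theta\big(n^{2k-1}e^{-\zeta(d-1)(k-1)R_n}\vee n^ke^{-\zeta(d-1)(k-1)R_n/2}\big)$. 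To upgrade this to the asymptotic equivalence in \eqref{e:variance.Sn}, write $S_n=\Tngamma+(S_n-\Tngamma)$: the difference counts only copies with some vertex of radial coordinate in $(\gamma R_n,R_n]$, so the Poincar\'e bound for $\V(S_n-\Tngamma)$ carries an extra factor $\int_{\gamma R_n}^{R_n}e^{\zeta(d-1)\Dk t}\,\barrhona(t)\,\md t=O(e^{-cR_n})$ with $c=(d-1)(\alpha-\zeta\Dk)\gamma>0$; thus $\V(S_n-\Tngamma)=o(\V(\Tngamma))$, and by Cauchy--Schwarz $\mathrm{Cov}(\Tngamma,S_n-\Tngamma)=o(\V(\Tngamma))$, whence $\V(S_n)=(1+o(1))\V(\Tngamma)$.

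\textbf{Principal difficulty.} The scheme is robust, but the bookkeeping is delicate: one must check that the single--vertex overlap ($j=1$) term is of the same order from below (via $\E[M_1]$) and from above (via the Poincar\'e bound), that no intermediate overlap pattern—and no configuration with $|x|\uparrow 1$—beats the two extremal orders $n^ke^{-\zeta(d-1)(k-1)R_n/2}$ and $n^{2k-1}e^{-\zeta(d-1)(k-1)R_n}$, and that all radial integrals are controlled uniformly up to the boundary. It is exactly at the last point—the boundedness of $\int_0^{R_n}e^{\zeta(d-1)\Dk t}\,\barrhona(t)\,\md t$—that $\alpha/\zeta>\Dk$ is indispensable; without it the upper half of \eqref{e:variance.Sn} fails and \eqref{e:variance.Tngamma} remains only a lower bound.
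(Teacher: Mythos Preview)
Your lower bound argument is essentially identical to the paper's: decompose $\E[(\Tngamma)^2]$ according to the number $\ell$ of shared vertices, identify $\V(\Tngamma)=\sum_{\ell\ge 1}\E(I_\ell)$, and minorize by the single term $\E(I_1)$ (restricted to pairs glued at a vertex of maximal degree, yielding the tree $\Gamma_{2k-1}^{(k,k)}$ to which Theorem \ref{t:expectation.Tngamma} applies) and by $\E(I_k)\ge \E(\Tngamma)$. Your observation that $\int_0^{\gamma R_n}e^{\zeta(d-1)\Dk t}\barrhona(t)\,dt\sim \alpha(d-1)\angamma(2\Dk)$ is exactly how the paper recovers the factor $\angamma(2\Dk)$.

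Where you genuinely diverge is the upper bound under $\alpha/\zeta>\Dk$. The paper stays with the same decomposition and computes $\E(I_\ell)$ directly for each $\ell$: using the monotonicity $\C(H,\cdot)\le \C(\Gamma_H,\cdot)$ for a spanning tree $\Gamma_H$ of the union graph and then applying Theorem \ref{t:expectation.Tngamma} to $\Gamma_H$, they obtain $\E(I_\ell)=O\bigl(n^{2k-\ell}e^{-\zeta(d-1)(2k-\ell-1)R_n/2}\bigr)$, note that $g(m)=n^m e^{-\zeta(d-1)(m-1)R_n/2}$ is monotone in $m$, and conclude that $\E(I_1)+\E(I_k)$ dominates. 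Your route via the Poincar\'e inequality $\V(F)\le\int\E[(D_xF)^2]\,\lambda(\md x)$ is perfectly valid and, after expanding $(D_xF)^2$ and applying Mecke, reproduces exactly the same family of integrals (indexed by $j$, over $2k-j$ points) up to bounded combinatorial multiplicities; your spanning--tree edge count $\ge 2k-j-1$ and the identification of $p=2\Dk$ as the critical exponent are both correct and match the paper's reasoning. The transfer to $S_n$ via $\V(\Ungamma)=o(\V(\Tngamma))$ and Cauchy--Schwarz is also the paper's final step; the paper does it by the same $\sum_\ell\E(J_\ell)$ decomposition rather than Poincar\'e, but the estimates are the same.

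One caveat: the Poincar\'e route gives only $\V(\Tngamma)=\Theta\bigl(n^{2k-1}e^{-\zeta(d-1)(k-1)R_n}\vee n^k e^{-\zeta(d-1)(k-1)R_n/2}\bigr)$, since the inequality is not sharp (for order--$k$ Poisson $U$--statistics it overshoots by a bounded factor). The paper's direct computation actually yields genuine asymptotic equivalences $\E(I_1)\sim C_1 n^{2k-1}e^{-\zeta(d-1)(k-1)R_n}$ and $\E(I_k)\sim C_2 n^k e^{-\zeta(d-1)(k-1)R_n/2}$ with specific constants, so that $\V(\Tngamma)/[\ldots]$ converges. If you want the full ``$\sim C^*$'' rather than ``$\Theta$'', you would need to revert to the direct term--by--term asymptotics for $\E(I_1)$ and $\E(I_k)$; your argument already delivers $\V(S_n)\sim\V(\Tngamma)$, so only this last refinement is missing.
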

The main point of \eqref{e:variance.Sn} is that the growth rate of $\V(S_n)$ is determined by how rapidly $R_n$ grows to infinity. To see this in more detail, we shall consider a special case for which $R_n = c \log n$ for some $c \in (0,\infty)$. Let $g(m) = n^m e^{- \zeta (d-1) (m-1)R_n/2}$, $m \in \bbn_+$. Now assuming $\alpha / \zeta > d_{(k)}$ and observing whether $g(m)$ is non-increasing or not depends on the value of $c$, we get the following variance growth rates :

\begin{equation*}
\V(S_n) \sim  
\begin{cases}  C^* n^{2k-1 - c\zeta(d-1) (k-1)} & \text{if} \, \, 0 < c < 2 \zeta^{-1}(d-1)^{-1} \\ 
C^* n & \text{if} \, \, c=2 \zeta^{-1} (d-1)^{-1}\\
C^* n^{k -  c\zeta (d-1)(k-1)/2} & \text{if} \, \, c > 2 \zeta^{-1}(d-1)^{-1}. 
\end{cases}
\end{equation*} \\

\subsection{Central limit theorem for sub-tree counts}~
\label{sec:clt}\\
Having derived variance bounds, we take the article to its natural conclusion by proving a central limit theorem for $S_n^{(\gamma)}$. As was evident in expectation and variance results in Theorems \ref{t:expectation.Tngamma} and \ref{t:variance.Tngamma}, the ratio $\alpha / \zeta$ will have a bearing on the matter. Before stating the normal approximation result, we need to define the two metrics - Wasserstein distance $d_W$ and Kolmogorov distance $d_K$ - to be used. Let $Y_1,Y_2$ be two random variables and $Lip(1)$ be the set of Lipschitz functions $h : \bbr \to \bbr$ with Lipschitz constant of at most $1$.  
\begin{eqnarray*}
d_W(Y_1,Y_2) & = & \sup_{h \in Lip(1)}\bigl|\, \E\bigl(h(Y_1)\bigr) - \E\bigl(h(Y_2)\bigr)\, \bigr|, \\
d_K(Y_1,Y_2)  & = & \sup_{x \in \bbr}\,  \bigl|\, \P(Y_1 \leq x) - \P(Y_2 \leq x)\, \bigr|. \nonumber
\end{eqnarray*} 
Even though we have defined $d_W, d_K$ as a distance between two random variables, they are actually a distance between two probability distributions. 
Let $N$ denote the standard normal random variable and $\Rightarrow$ denote weak convergence in $\bbr$. In our proof, we derive more detailed bounds, albeit complicated, that also indicate the scope for improvement. 
\begin{theorem}
\label{t:clt.Tngamma}
Let  $\Gamma_k$ be a tree on $k$ vertices ($k \geq 2$) with degree sequence $d_1,\ldots,d_k$ and $\Tngamma$ be the sub-tree counts as defined in \eqref{e:tngamma}. Assume further that $R_n$ satisfies $ne^{-\zeta(d-1) R_n/2} \to c \in (0,\infty]$. For every $0 < a < 1/2$, there exists $0 < \gamma_0 < 1/2$ such that for all $0 < \gamma < \gamma_0$, we have that
\begin{equation}
\label{e:dW.Tngamma}
d_W\left( \frac{\Tngamma - \E(\Tngamma)}{\sqrt{\V(\Tngamma)}},N\right) = O(n^{-a}), \, \, \, \, d_K\left( \frac{\Tngamma - \E(\Tngamma)}{\sqrt{\V(\Tngamma)}},N\right) = O(n^{-a}).
\end{equation} 
Further, if $\alpha /\zeta > \Dk$, then
\begin{equation}
\label{e:clt.tn}
\frac{S_n - \E(S_n)}{\sqrt{\V(S_n)}} \Rightarrow N, \ \ \ n\to\infty.
\end{equation}
\end{theorem}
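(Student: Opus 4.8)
\textbf{Proof proposal for Theorem \ref{t:clt.Tngamma}.}

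The plan is to apply the abstract normal approximation bound of \citep{last:peccati:schulte:2016} to the Poisson functional $\Tngamma = \C(\Gamma_k, HG_n^{(\gamma)}(R_n;\alpha,\zeta))$. That result bounds $d_W$ and $d_K$ between the normalized functional and $N$ in terms of integrals of the first-order difference operator $D_x \Tngamma := \Tngamma(\cP_n \cup \{x\}) - \Tngamma(\cP_n)$ (the add-one cost) and the second-order difference operator $D^2_{x_1,x_2}\Tngamma$. Concretely, the bound is a sum of three terms of the shape $\int (\E(D_{x_1}F)^2 (\E(D^2_{x_1,x_2}F)^2)\,\dots$, divided by appropriate powers of $\V(F)$; so the first step is to write out $D_x\Tngamma$ and $D^2_{x_1,x_2}\Tngamma$ explicitly from the homomorphism representation \eqref{e:tngamma}. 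Adding a point $x$ at hyperbolic distance $R_n - t$ from the origin creates new sub-tree copies only through tuples in which $x$ plays the role of one of the $k$ vertices; the number of such copies is itself a sum over homomorphisms of the ``rooted'' trees $\Gamma_k$ with a marked vertex of degree $d_i$. The key estimate, which will reuse Lemma \ref{l:pdf} and the angular-integral estimates (Lemma \ref{l:int_angle_tree}), is an upper bound of the form $\E\bigl[(D_x\Tngamma)^m\bigr] \le C^* \bigl(\text{deterministic function of } t\bigr)$, and similarly for the second-order operator, where the deterministic functions involve products of the $\angamma(\cdot)$'s with one or two of the $\angamma(d_i)$ replaced by a localized factor $e^{\zeta(d-1)(d_i - 2\alpha/\zeta)t/2}$ (no integration in that coordinate). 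These are exactly the ``detailed bounds on the first and second order difference operators'' promised in the introduction.

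Next I would integrate these pointwise bounds against the intensity measure $n(\rhona\times\pi)$ over $D_\gamma(R_n)$ (and over pairs, for the second-order term), using the exponential approximation $\barrhona(t) \le (1+o(1))\alpha(d-1)e^{-\alpha(d-1)t}$ from Lemma \ref{l:pdf}(i). Each resulting integral $\int_0^{\gamma R_n} e^{-\alpha(d-1)t} e^{\zeta(d-1)(d_i-2\alpha/\zeta)t/2}\,dt$ is controlled, and the total contribution of the three Malliavin--Stein terms can be compared to the variance lower bound from Theorem \ref{t:variance.Tngamma}. The crucial point is that the ``bad'' terms in the numerator carry strictly more localization than $\V(\Tngamma)$ relative to the number of integrations, so that after dividing by $\V(\Tngamma)^{3/2}$ (or $\V(\Tngamma)^2$, depending on the term) one is left with a quantity that decays like a negative power of $n$. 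The exponent of that power depends on $\gamma$ and degrades as $\gamma \uparrow 1/2$; by choosing $\gamma_0$ small enough one can guarantee the decay rate is at least $n^{-a}$ for the prescribed $a < 1/2$. This yields \eqref{e:dW.Tngamma}. The hypothesis $ne^{-\zeta(d-1)R_n/2}\to c\in(0,\infty]$ is what keeps $n^k e^{-\zeta(d-1)(k-1)R_n/2}$ (hence the variance, by Theorem \ref{t:variance.Tngamma}) from decaying too fast and killing the argument.

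For the second assertion \eqref{e:clt.tn}, when $\alpha/\zeta > \Dk$ I would use the fact (established in Theorems \ref{t:expectation.Tngamma} and \ref{t:variance.Tngamma}) that $\E(S_n)\sim\E(\Tngamma)$ and $\V(S_n)\sim\V(\Tngamma)$ for every $\gamma\in(0,1]$: the boundary annulus $D_\gamma(R_n)$ already captures the full first- and second-order asymptotics. So I would fix $\gamma<\gamma_0$ from the first part, write $S_n = \Tngamma + (S_n - \Tngamma)$, and show the remainder $S_n - \Tngamma$ is negligible in $L^2$ after centering and scaling by $\sqrt{\V(S_n)}$ --- that is, $\V(S_n - \Tngamma) = o(\V(S_n))$ and $\E(S_n)-\E(\Tngamma) = o(\sqrt{\V(S_n)})$. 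Both follow from expectation/variance estimates for sub-tree counts supported on the complementary region $B(0,(1-\gamma)R_n)$, for which $\angamma$-type integrals over $[\gamma R_n, R_n]$ or over the inner ball are exponentially smaller when $2\alpha/\zeta > \Dk$. Then \eqref{e:clt.tn} follows from \eqref{e:dW.Tngamma} by Slutsky's theorem. The main obstacle I anticipate is the bookkeeping in the pointwise bounds for $D^2_{x_1,x_2}\Tngamma$: one must track which two of the $k$ vertex-coordinates are ``frozen'' at $x_1$ and $x_2$, handle the cases where $x_1,x_2$ occupy adjacent versus non-adjacent vertices of $\Gamma_k$ (the former forcing an additional proximity constraint $d(x_1,x_2)\le R_n$ that couples the two radial variables through their relative angle), and verify in every case that the localization introduced is enough to beat $\V(\Tngamma)^{3/2}$ --- this is where the tree structure and Lemma \ref{l:int_angle_tree} are indispensable.
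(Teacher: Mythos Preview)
Your overall plan matches the paper's: apply the Last--Peccati--Schulte normal approximation bound, control the difference operators of the $U$-statistic $\Tngamma$ via tree-count estimates of the same flavour as Theorem~\ref{t:expectation.Tngamma}, compare against the variance lower bound of Theorem~\ref{t:variance.Tngamma}, and then deduce \eqref{e:clt.tn} from \eqref{e:dW.Tngamma} by showing $\V(S_n-\Tngamma)=o(\V(S_n))$ and applying Slutsky. The second part of your sketch is essentially the paper's argument verbatim.

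There is, however, one genuine missing ingredient in the first part. You propose to bound $\E\bigl[(D^2_{x_1,x_2}\Tngamma)^m\bigr]$ by a deterministic function of the radial coordinates $t_1,t_2$ alone, and you locate the only angular coupling in the case where $x_1,x_2$ sit at \emph{adjacent} vertices of $\Gamma_k$. This is not enough. The point is that $D^2_{x_1,x_2}\Tngamma\neq 0$ forces $x_1$ and $x_2$ to lie on a common copy of $\Gamma_k$ inside $HG_n^{(\gamma)}$, hence to be joined by a path of length at most $\mathrm{diam}(\Gamma_k)\le k$ through points of $\bigl(\cP_n\cap D_\gamma(R_n)\bigr)\cup\{x_1,x_2\}$, \emph{regardless} of whether the two frozen vertices are adjacent in $\Gamma_k$. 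Because every intermediate point has $t\le\gamma R_n<R_n/2$, an iteration of Lemma~\ref{l:angle} along the path gives the deterministic constraint
\[
\theta_{12}\le \bigl(1+o(1)\bigr)\,2k\,e^{-\zeta(1-2\gamma)R_n/2},
\]
and integrating this indicator against $\pi_{\mathrm{rel}}$ produces an extra factor $e^{-\zeta(d-1)(1-2\gamma)R_n/2}$ per second-order operator. This factor is what makes the $D^2$-terms in the LPS bound decay; if you only track $t_1,t_2$ and integrate freely over angles, the three-point integral is of order $n^3$ rather than $n^3e^{-\zeta(d-1)(1-2\gamma)R_n}$, and since $e^{\zeta(d-1)(1-2\gamma)R_n/2}$ is of order $n^{1-2\gamma}$ under the hypothesis $\rho_n\to c\in(0,\infty]$, your bound would diverge for small $\gamma$ instead of decaying.

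The paper isolates this mechanism cleanly by using the \emph{processed} LPS bound (their Theorem~6.1 rather than the raw Theorem~1.1): it takes crude radial-only suprema $c_{1,n}=\sup_x\E|D_x\Tngamma|^5$ and $c_{2,n}=\sup_{x,y}\E|D^2_{x,y}\Tngamma|^5$ (Lemma~\ref{lem:c1c2bound}), and puts all of the angular gain into a separate estimate $\int\bigl[\P(D^2_{x_1,x_3}\neq0)\P(D^2_{x_2,x_3}\neq0)\bigr]^a\,\lambda^3=O\bigl(n^3e^{-\zeta(d-1)(1-2\gamma)R_n}\bigr)$ (Lemma~\ref{lem:W1W2bound}), proved exactly via the path-angle argument above. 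Your route through the raw $\gamma_1,\gamma_2,\gamma_3$ integrals can be made to work, but only if you recognise that the pointwise bound on $\E[(D^2_{x_1,x_3}F)^2(D^2_{x_2,x_3}F)^2]$ carries the indicator $\one\{\theta_{13},\theta_{23}\le C e^{-\zeta(1-2\gamma)R_n/2}\}$ and you integrate that indicator explicitly. Either way, the path-implies-small-angle step is the essential geometric input you are currently missing.
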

%
%
%
%
\vspace*{0.5cm}
\subsection{Special case :  $R_n = 2[\zeta (d-1)]^{-1} \log (n/\nu), \nu \in (0,\infty)$}~
\label{sec:specialcase}

The objective of this short section is to restate our results in the special case of $R_n = 2[\zeta (d-1)]^{-1} \log (n/\nu)$ for a positive constant $\nu \in (0,\infty)$. For $\alpha / \zeta > 1/2$, this corresponds to the thermodynamic regime as the average degree is asymptotically constant. We state these results here so as to enable easier comparison with precedent studies. This is also exactly what we assumed in the Section \ref{sec:resultsample} and clearly weaker than our assumptions in Sections \ref{sec:exp_var} and \ref{sec:clt}.
Under this scheme, it is easy to see that as $n \to \infty$, 
\begin{align}
\angamma (p) &\sim \bigl| (d-1) (\alpha  - \zeta p /2) \bigr|^{-1} \Bigl( \frac{n}{\nu} \Bigr)^{\gamma( p-2\alpha/ \zeta )_+} \one \bigl\{ p \neq 2\alpha /\zeta  \bigr\}  + \frac{2\gamma}{\zeta(d-1)}\, \log \Bigl( \frac{n}{\nu} \Bigr)\, \one \bigl\{ p =  2\alpha/ \zeta  \bigr\}, \ \ \ p \in \bbn_+, \label{e:angamma.special} 
\end{align}
where $(a)_+ = a$ if $a>0$ and $(a)_+=0$ otherwise. 
The result below simplifies the situation by focusing on a further special case, for which $2\alpha /\zeta$ is not an integer (to drop the second line in \eqref{e:angamma.special}). 
\begin{corollary}
\label{cor:specialcase}
Let $R_n = 2[\zeta (d-1)]^{-1}  \log (n/\nu)$, and $\Gamma_k$ be a tree on $k$ vertices ($k\geq 2$) with degree sequence $d_1,\dots,d_k$ and $\Tngamma$ be a sub-tree counts as defined in \eqref{e:tngamma}. Suppose that $2\alpha /\zeta$ is not an integer. For $\gamma  \in (0,1) \setminus \{1/2\}$, we have, as $n\to\infty$, 
\begin{align*} 
\E(\Tngamma) \sim \biggl( \frac{2^{d-1}}{(d-1)\kappa_{d-2}}\biggr)^{k-1} \alpha^k  &\prod_{j=1}^k\,  \Bigl|\,  \alpha  - \frac{\zeta}{2} d_j \, \Bigr|^{-1} \nu^{k-1-\gamma \sum_{i=1}^k (d_i - 2\alpha /\zeta)_+}  n^{1 + \gamma\sum_{i=1}^k (d_i - 2\alpha /\zeta)_+},  
\end{align*}
and for $\gamma = 1/2$, we have that 
\begin{equation}
\label{eqn:claim5.2}
\E(\Tngamma) = \Theta(n^{1 + 2^{-1}\sum_{i=1}^k (d_i - 2\alpha /\zeta)_+}).
\end{equation}
Further, as for variance asymptotics, we have for $\gamma \in (0,1)$,
\begin{align*}
\V(\Tngamma) &= \Omega \Bigl( n^{1 + 2\gamma(d_{(k)} - \alpha /\zeta)_+ + 2\gamma \sum_{i=1}^{k-1} (d_{(i)} - 2\alpha/ \zeta)_+} \, \vee \, n^{1 +\gamma \sum_{i=1}^k (d_i - 2\alpha/ \zeta)_+ } \Bigr). 
\end{align*}
If $2\alpha /\zeta > d_{(k)}$, then we have for $\gamma \in (0,1]$,
\begin{align*}
 \E (S_n) \sim \E(\Tngamma) \sim  \biggl( \frac{2^{d-1}}{(d-1)\kappa_{d-2}}\biggr)^{k-1} \alpha^k \prod_{j=1}^k\,  \Bigl(\,  \alpha - \frac{\zeta}{2} d_j \, \Bigr)^{-1} \nu^{k-1}n, \ \ \ n\to\infty.
\end{align*}
If $\alpha / \zeta > d_{(k)}$, then we have for $\gamma \in (0,1]$,
$$
 \V(S_n) \sim \V (\Tngamma) \sim C^* n,  \ \ \ n\to\infty.
$$
\end{corollary}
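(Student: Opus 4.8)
The plan is to derive Corollary~\ref{cor:specialcase} as a direct specialization of Theorems~\ref{t:expectation.Tngamma}, \ref{t:variance.Tngamma}, and \ref{t:clt.Tngamma} by substituting $R_n = 2[\zeta(d-1)]^{-1}\log(n/\nu)$ into the generic formulas. The only genuinely new computation is the asymptotic evaluation of $\angamma(p)$, i.e.\ establishing \eqref{e:angamma.special}. For this, first note that $e^{-\zeta(d-1)R_n/2} = \nu/n$, and compute directly
\begin{equation*}
\angamma(p) = \int_0^{\gamma R_n} e^{\zeta(d-1)(p - 2\alpha/\zeta)t/2}\,dt.
\end{equation*}
If $p \neq 2\alpha/\zeta$ the integrand is a genuine exponential, so the integral equals $\bigl[(d-1)(\alpha - \zeta p/2)/(-1)\bigr]^{-1}\bigl(e^{\zeta(d-1)(p-2\alpha/\zeta)\gamma R_n/2} - 1\bigr)$; then $e^{\zeta(d-1)(p-2\alpha/\zeta)\gamma R_n/2} = (n/\nu)^{\gamma(p - 2\alpha/\zeta)}$, and one keeps only the dominant term — the exponential term when $p > 2\alpha/\zeta$ (giving the $(n/\nu)^{\gamma(p-2\alpha/\zeta)}$ growth) and the constant $1$ when $p < 2\alpha/\zeta$ (since the exponential term then vanishes). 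Combining these two cases gives exactly $|(d-1)(\alpha - \zeta p/2)|^{-1}(n/\nu)^{\gamma(p-2\alpha/\zeta)_+}$. If $p = 2\alpha/\zeta$ the integrand is constant $1$ and $\angamma(p) = \gamma R_n = \frac{2\gamma}{\zeta(d-1)}\log(n/\nu)$, which is the second term. This establishes \eqref{e:angamma.special}.

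Next I would plug this into \eqref{e:expectation.Tngamma}. Under the hypothesis that $2\alpha/\zeta$ is not an integer, no $d_j$ can equal $2\alpha/\zeta$, so the indicator $\one\{p = 2\alpha/\zeta\}$ vanishes for every $p = d_j$, and we may use $\angamma(d_j) \sim |(d-1)(\alpha - \zeta d_j/2)|^{-1}(n/\nu)^{\gamma(d_j - 2\alpha/\zeta)_+}$. Substituting and collecting the powers of $n$ and $\nu$: the prefactor $n^k e^{-\zeta(d-1)(k-1)R_n/2} = n^k (\nu/n)^{k-1} = \nu^{k-1} n$, and the product over $j$ contributes $(d-1)^{-k}\prod_j |\alpha - \zeta d_j/2|^{-1}$ together with $(n/\nu)^{\gamma\sum_i (d_i - 2\alpha/\zeta)_+}$. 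Multiplying out the $\nu$ exponents gives $\nu^{k-1-\gamma\sum_i(d_i-2\alpha/\zeta)_+}$ and the $n$ exponent $1 + \gamma\sum_i (d_i - 2\alpha/\zeta)_+$, matching the claimed formula (the constant $\alpha^k (d-1)(2^{d-1}/\kappa_{d-2})^{k-1}$ from the theorem combines with the $(d-1)^{-k}$ to give $(2^{d-1}/((d-1)\kappa_{d-2}))^{k-1}$). The $\gamma = 1/2$ case \eqref{eqn:claim5.2} follows identically from \eqref{e:exp.gamma.one.half}, noting that each $a_n^{(1/2)}(d_i)$ is of polynomial order $n^{(d_i - 2\alpha/\zeta)_+/2}$ (possibly up to logarithmic factors, but the no-integer assumption removes those), so one only tracks the polynomial exponent. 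For the last two expectation/variance statements with $2\alpha/\zeta > d_{(k)}$, every $(d_i - 2\alpha/\zeta)_+ = 0$, so $\nu^{k-1-\gamma\cdot 0} = \nu^{k-1}$, $\angamma(d_j) \to |\alpha - \zeta d_j/2|^{-1} = (\alpha - \zeta d_j/2)^{-1}$ (positive since $d_j \le d_{(k)} < 2\alpha/\zeta$), and one reads off $\E(S_n) \sim \E(\Tngamma) \sim (2^{d-1}/((d-1)\kappa_{d-2}))^{k-1}\alpha^k\prod_j(\alpha - \zeta d_j/2)^{-1}\nu^{k-1}n$ directly from \eqref{e:expectation.Sn}.

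For the variance asymptotics I would substitute \eqref{e:angamma.special} into the $\Omega$-bound \eqref{e:variance.Tngamma}. The first term inside the $\Omega$ is $n^{2k-1}e^{-\zeta(d-1)(k-1)R_n}\angamma(2d_{(k)})\prod_{i=1}^{k-1}\angamma(d_{(i)})^2$; here $e^{-\zeta(d-1)(k-1)R_n} = (\nu/n)^{2(k-1)}$, so the $n$-power from the prefactor and $\angamma$-factors works out (after discarding all $\nu$-dependence, which only affects the constant) to $n^{2k-1 - 2(k-1)}\cdot n^{2\gamma(d_{(k)} - \alpha/\zeta)_+}\cdot n^{2\gamma\sum_{i=1}^{k-1}(d_{(i)} - 2\alpha/\zeta)_+}$, where I used $\angamma(2d_{(k)}) \asymp n^{\gamma(2d_{(k)} - 2\alpha/\zeta)_+} = n^{2\gamma(d_{(k)} - \alpha/\zeta)_+}$ when $d_{(k)} \ne \alpha/\zeta$ (again guaranteed by the non-integer hypothesis on $2\alpha/\zeta$), and $2k-1-2(k-1) = 1$; this yields the first term $n^{1 + 2\gamma(d_{(k)} - \alpha/\zeta)_+ + 2\gamma\sum_{i=1}^{k-1}(d_{(i)} - 2\alpha/\zeta)_+}$. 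The second term of \eqref{e:variance.Tngamma} reduces in the same way as the expectation to $n^{1 + \gamma\sum_i(d_i - 2\alpha/\zeta)_+}$, giving the stated $\Omega$-bound. When $\alpha/\zeta > d_{(k)}$ all the $(\cdot)_+$ exponents vanish and \eqref{e:variance.Sn} collapses to $C^*[n^{2k-1}(\nu/n)^{2(k-1)} \vee n^k(\nu/n)^{k-1}] = C^*[n \vee n] = C^* n$; thus $\V(S_n)\sim\V(\Tngamma)\sim C^* n$, and the final CLT statement is then immediate from \eqref{e:clt.tn} of Theorem~\ref{t:clt.Tngamma} under the same condition $\alpha/\zeta > d_{(k)}$, since $ne^{-\zeta(d-1)R_n/2} = \nu \in (0,\infty)$ satisfies that theorem's hypothesis. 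There is no real obstacle here — the entire argument is bookkeeping of exponents — the only point requiring care is making sure the non-integer assumption on $2\alpha/\zeta$ is used precisely where it is needed, namely to exclude the logarithmic ($p = 2\alpha/\zeta$) case both for $p = d_j$ and for $p = 2d_{(k)}$ (the latter requiring $d_{(k)} \ne \alpha/\zeta$), and to ensure the absolute values $|\alpha - \zeta d_j/2|$ never collapse to zero.
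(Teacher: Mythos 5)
Your proposal is correct and follows exactly the route the paper intends: derive the asymptotics of $\angamma(p)$ in \eqref{e:angamma.special} by direct integration under the choice $R_n = 2[\zeta(d-1)]^{-1}\log(n/\nu)$ (so that $e^{-\zeta(d-1)R_n/2} = \nu/n$), then substitute into Theorems \ref{t:expectation.Tngamma} and \ref{t:variance.Tngamma} and collect exponents. Your bookkeeping of the $(d-1)$, $\nu$, and $n$ powers is accurate, and you correctly locate where the non-integer hypothesis on $2\alpha/\zeta$ is used (to rule out the logarithmic branch of $\angamma$ at $p=d_j$ and at $p=2d_{(k)}$, i.e.\ to exclude $d_{(k)}=\alpha/\zeta$).
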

To avoid repetition, we have not stated the CLT again but the CLT in Theorem \ref{t:clt.Tngamma} holds under the assumptions of Corollary \ref{cor:specialcase} with $c = \nu$. As mentioned in the introduction, \eqref{eqn:claim5.2} is a partial generalization of \citep[Claim 5.2]{candellero:fountoulakis:2016} to higher dimensions. 
\section{{\bf Proofs}}
\label{sec:proof} 

We first prove the basic lemma on approximating the hyperbolic probability density. Subsequently,  Section \ref{sec:hyperbolicapp} presents a few lemmas concerned with hyperbolic distance. Utilizing these lemmas, we prove  expectation and variance results for the sub-tree counts in Section \ref{sec:proofs_exp}. Finally Section \ref{sec:proofs_clt} establishes the required central limit theorem. Throughout this section, $C^*$ denotes a generic positive constant, which may vary between lines and does not depend on $n$.
\begin{proof}[Proof of Lemma \ref{l:pdf}]
We see that 
\begin{equation} \label{e:original.pdf}
\barrhona (t) = \frac{e^{\alpha (d-1)(R_n-t)} \bigl( 1-e^{-2\alpha (R_n-t)} \bigr)^{d-1} }{\int_0^{R_n} e^{\alpha (d-1)s} \bigl( 1-e^{-2\alpha s} \bigr)^{d-1} ds}. 
\end{equation}
Applying the binomial expansion
$$
 \bigl( 1-e^{-2\alpha s} \bigr)^{d-1} = \sum_{k=0}^{d-1} \begin{pmatrix} d-1 \\ k \end{pmatrix} (-1)^k e^{-2\alpha ks}, \, \, s > 0, 
$$
we find that $\int_0^{R_n} e^{\alpha (d-1)s}ds$ is asymptotically the leading term in the denominator in \eqref{e:original.pdf}. Due to an obvious inequality $ \bigl( 1-e^{-2\alpha (R_n-t)} \bigr)^{d-1} \leq 1$, we complete the proof of $(i)$. 

For the proof of $(ii)$, we need to handle the numerator of \eqref{e:original.pdf} as well. By another application of the binomial expansion, we have that
\begin{align*}
e^{\alpha (d-1) (R_n-t)}\, \Bigl[ \, 1+ \sum_{k=1}^{d-1} \begin{pmatrix} d-1 \\ k \end{pmatrix} (-1)^k e^{-2\alpha k(R_n-t)} \, \Bigr] = e^{\alpha (d-1) (R_n-t)} \bigl(1+o(1)  \bigr)
\end{align*}
uniformly for $0 \leq t \leq \lambda R_n$, where $0 < \lambda <1$. 
\end{proof}

\subsection{Lemmas on hyperbolic distances}
\label{sec:hyperbolicapp}~\\

We first mention some lemmas that help us to approximate the Poincar\'e metric. Given $u_1,u_2 \in B(0,R_n)$, let $\ta_{12} \in [0,\pi]$ be the relative angle between two vectors $\overrightarrow {Ou_1}$ and $\overrightarrow {Ou_2}$, where $O$ denotes the origin of $\Bdzeta$. We also denote $t_i = R_n-d(0,u_i)$, $i=1,2$.
\begin{lemma} \label{l:angle}
Set $\hat \ta_{12} = \bigl(e^{-2 \zeta(R_n-t_1)}+  e^{-2 \zeta(R_n-t_2)} \bigr)^{1/2}$. If $\hat \ta_{12} / \ta_{12}$ vanishes as $n\to\infty$, 
$$
d(u_1,u_2) = 2R_n -(t_1+t_2) + \frac{2}{\zeta}\log \sin \Bigl( \frac{\ta_{12}}{2} \Bigr) + O \biggl( \Bigl( \frac{\hat \ta_{12}}{\ta_{12}} \Bigr)^2 \biggr),  \ \ \ n\to\infty
$$
uniformly for all $u_1,u_2$ with $t_1+ t_2 \leq R_n-\omega_n$, where $\omega_n=\log \log R_n$.  
\end{lemma}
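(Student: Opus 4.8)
The goal is the hyperbolic law of cosines for the Poincar\'e ball, expanded in the regime where the relative angle $\ta_{12}$ is large compared to the exponentially small scale $\hat\ta_{12}$. The starting point is the exact formula for the hyperbolic distance in $\Bdzeta$: for two points $u_1, u_2$ with radial coordinates $d(0,u_i) = R_n - t_i$ and relative angle $\ta_{12}$, one has
\begin{equation*}
\cosh\bigl(\zeta\, d(u_1,u_2)\bigr) = \cosh\bigl(\zeta(R_n - t_1)\bigr)\cosh\bigl(\zeta(R_n - t_2)\bigr) - \sinh\bigl(\zeta(R_n - t_1)\bigr)\sinh\bigl(\zeta(R_n - t_2)\bigr)\cos\ta_{12}.
\end{equation*}
I would first rewrite the right-hand side using $\cosh a \cosh b - \sinh a \sinh b \cos\ta = \cosh(a-b) + \sinh a \sinh b (1 - \cos\ta) = \cosh(a-b) + 2\sinh a \sinh b \sin^2(\ta/2)$, with $a = \zeta(R_n - t_1)$, $b = \zeta(R_n - t_2)$. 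Since $a - b = \zeta(t_2 - t_1)$ is bounded (because $|t_1 - t_2| \le R_n$ but actually on compact scale after the subtraction; more carefully, the $\cosh(a-b)$ term will be negligible next to the product term), the dominant contribution is $2\sinh a \sinh b \sin^2(\ta_{12}/2)$.

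\textbf{Key steps.} Next I would approximate $2\sinh a \sinh b = \tfrac12\bigl(e^{a} - e^{-a}\bigr)\bigl(e^{b} - e^{-b}\bigr) = \tfrac12 e^{a+b}\bigl(1 - e^{-2a}\bigr)\bigl(1 - e^{-2b}\bigr)$. With $a + b = \zeta(2R_n - t_1 - t_2)$, and the condition $t_1 + t_2 \le R_n - \omega_n$ with $\omega_n = \log\log R_n \to \infty$ ensuring $e^{-2a}, e^{-2b}$ are small and controlled, this gives $2\sinh a\sinh b = \tfrac12 e^{\zeta(2R_n - t_1 - t_2)}\bigl(1 + O(e^{-2\zeta(R_n - t_1)} + e^{-2\zeta(R_n - t_2)})\bigr) = \tfrac12 e^{\zeta(2R_n - t_1 - t_2)}\bigl(1 + O(\hat\ta_{12}^2)\bigr)$. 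Then
\begin{equation*}
\cosh\bigl(\zeta\, d(u_1,u_2)\bigr) = \tfrac12 e^{\zeta(2R_n - t_1 - t_2)}\sin^2(\ta_{12}/2)\,\Bigl(1 + O\bigl(\hat\ta_{12}^2 / \sin^2(\ta_{12}/2)\bigr) + O\bigl(e^{-\zeta(2R_n - t_1 - t_2)}/\sin^2(\ta_{12}/2)\bigr)\Bigr),
\end{equation*}
where I also absorbed the $\cosh(a - b) \le \cosh(\zeta R_n)$-type term; I need to check that term is $O(e^{\zeta(2R_n - t_1 - t_2)}\hat\ta_{12}^2)$ or smaller, which follows since $\cosh(\zeta(t_1 - t_2)) \le e^{\zeta|t_1 - t_2|} \le e^{\zeta R_n}$ while $e^{\zeta(2R_n - t_1 - t_2)} \ge e^{\zeta(R_n + \omega_n)}$. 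Finally, take $\tfrac{1}{\zeta}\log(2\cdot)$ of both sides: since $\cosh x = \tfrac12 e^{x}(1 + e^{-2x})$ and $d(u_1,u_2)$ is large, $\zeta\, d(u_1,u_2) = \log\bigl(2\cosh(\zeta d)\bigr) + O(e^{-2\zeta d})$, so
\begin{equation*}
d(u_1,u_2) = 2R_n - (t_1 + t_2) + \tfrac{2}{\zeta}\log\sin(\ta_{12}/2) + O\bigl((\hat\ta_{12}/\ta_{12})^2\bigr),
\end{equation*}
using $\log(1 + x) = O(x)$ for the error terms, $\sin(\ta_{12}/2) \asymp \ta_{12}$ for $\ta_{12} \in [0,\pi]$, and noting the second error term $e^{-\zeta(2R_n - t_1 - t_2)}/\ta_{12}^2$ is dominated by $\hat\ta_{12}^2/\ta_{12}^2$ up to constants.

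\textbf{Main obstacle.} The routine part is the trigonometric/hyperbolic algebra; the delicate part is bookkeeping the uniformity. I must verify that \emph{every} $O(\cdot)$ is uniform over all $u_1, u_2$ satisfying $t_1 + t_2 \le R_n - \omega_n$, in particular that the hypothesis $\hat\ta_{12}/\ta_{12} \to 0$ is exactly what licenses expanding $\log(1 + O(\hat\ta_{12}^2/\sin^2(\ta_{12}/2)))$ as $O((\hat\ta_{12}/\ta_{12})^2)$, and that the comparison $e^{-\zeta(2R_n - t_1 - t_2)} \le \hat\ta_{12}^2 = e^{-2\zeta(R_n-t_1)} + e^{-2\zeta(R_n-t_2)}$ holds in the relevant range (it does: $2R_n - t_1 - t_2 \ge (R_n - t_1) + (R_n - t_2) \ge \max(R_n - t_1, R_n - t_2) + \omega_n$, giving an extra decaying factor). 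The role of $\omega_n = \log\log R_n$ is precisely to make $e^{-\zeta\omega_n}$ beat any polynomial-in-$R_n$ loss coming from $\cosh(a-b)$-type terms while still being $o(R_n)$, so that $t_1 + t_2$ is allowed to range up to essentially all of $R_n$; I would make this quantitative bound explicit in the writeup.
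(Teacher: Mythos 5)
Your proof is correct and takes essentially the same approach as the paper, which simply writes down the hyperbolic law of cosines on the great circle through $u_1,u_2$ and delegates the asymptotic expansion to Lemma~2.3 of Fountoulakis (2012); you have carried out that expansion explicitly, including the reduction of the $\cosh(a-b)$ term and the $\log\cosh$ correction. One small quibble: the comparison $e^{-\zeta(2R_n-t_1-t_2)}\le \hat\theta_{12}^2$ is correct but does not come from the $\omega_n$-based chain you quote (your inequality would require $2\min(R_n-t_1,R_n-t_2)-\max(R_n-t_1,R_n-t_2)\le\omega_n$, which can fail); since $2R_n-t_1-t_2=(R_n-t_1)+(R_n-t_2)$ it is just the AM--GM bound $e^{-u}e^{-v}\le \tfrac12(e^{-2u}+e^{-2v})$, while the real job of $\omega_n\to\infty$ is to force $\hat\theta_{12}\to0$ (hence $d(u_1,u_2)\to\infty$) uniformly over the admissible range of $(t_1,t_2)$.
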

\begin{proof}
Fix a great circle of $B(0,R_n)$ spanned by $\overrightarrow{Ou_1}$ and $\overrightarrow{Ou_2}$. Then, the hyperbolic law of cosine yields
\begin{align*}
\cosh  \bigl( \zeta d(u_1,u_2) \bigr) &= \cosh  \zeta(R_n-t_1) \cosh  \zeta(R_n-t_2)  - \sinh \zeta(R_n-t_1)\sinh  \zeta(R_n-t_2)  \cos(\ta_{12}). 
\end{align*}
Since this great circle is a two-dimensional subspace of $B(0,R_n)$, the rest of the argument is completely the same as Lemma 2.3 in \citep{fountoulakis:2012}. 
\end{proof}
\begin{lemma} \label{l:int.angle}
In terms of the hyperbolic polar coordinate, let $X_1 = (t_1,\Ta_1), X_2 = (t_2,\Ta_2)$, where $\Ta_1,\Ta_2$ are iid random vectors on $C_d$ with density $\pi$, and $t_1,t_2$ are deterministic, representing the hyperbolic distance from the boundary. Under the setup in Lemma \ref{l:angle}, 
$$ 
\P\bigl(\, d(X_1,X_2) \leq R_n\bigr) \sim \frac{2^{d-1}}{(d-1)\kappa_{d-2}}e^{-\zeta(d-1)(R_n - t_1 - t_2)/2}, \ \ n\to\infty,
$$
uniformly on $\bigl\{(t_1,t_2): t_1+t_2 \leq R_n-\omega_n \bigr\}$, where $\kappa_{d-2} = \int_0^\pi \sin^{d-2} \ta\, d\ta$. 
\end{lemma}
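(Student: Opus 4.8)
\textbf{Proof plan for Lemma \ref{l:int.angle}.}

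The plan is to start from Lemma \ref{l:angle}, which gives an asymptotic expansion of $d(X_1,X_2)$ in terms of the relative angle $\ta_{12}$ between $\overrightarrow{OX_1}$ and $\overrightarrow{OX_2}$, valid uniformly on $\{ t_1 + t_2 \le R_n - \omega_n \}$. First I would rewrite the event $\{ d(X_1,X_2) \le R_n \}$ using that expansion: up to the negligible $O((\hat\ta_{12}/\ta_{12})^2)$ error term, $d(X_1,X_2) \le R_n$ is (asymptotically) equivalent to $2R_n - (t_1+t_2) + (2/\zeta)\log\sin(\ta_{12}/2) \le R_n$, i.e. to $\sin(\ta_{12}/2) \le e^{-\zeta(R_n - t_1 - t_2)/2}$. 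Writing $\varepsilon_n := e^{-\zeta(R_n - t_1 - t_2)/2}$, which is small (indeed $\le e^{-\zeta\omega_n/2} \to 0$) uniformly on the relevant set, and using $\sin(\ta_{12}/2) \sim \ta_{12}/2$ for small angles, this becomes $\ta_{12} \lesssim 2\varepsilon_n$. The point where some care is needed is controlling the $O((\hat\ta_{12}/\ta_{12})^2)$ term: on the boundary region $\ta_{12} \approx 2\varepsilon_n$ one has $\hat\ta_{12}/\ta_{12} = O(e^{-\zeta(R_n-t_1-t_2)/2}/\varepsilon_n)$ which is $O(1)$, not $o(1)$, so Lemma \ref{l:angle} does not directly apply at the threshold. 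I would handle this by a standard sandwiching argument: for any fixed $\delta > 0$, the event $\{\ta_{12} \le 2(1-\delta)\varepsilon_n\}$ is contained in $\{d(X_1,X_2) \le R_n\}$ which is contained in $\{\ta_{12} \le 2(1+\delta)\varepsilon_n\}$ for $n$ large, compute the probability of the bracketing events, and let $\delta \downarrow 0$ at the end. (On $\{\ta_{12} \le 2(1\pm\delta)\varepsilon_n\}$ with $\ta_{12}$ not too small, $\hat\ta_{12}/\ta_{12}$ is genuinely $o(1)$ because $\hat\ta_{12}$ is of smaller exponential order than $\varepsilon_n$; the tiny-$\ta_{12}$ sub-region has negligible probability and can be absorbed.)

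The second ingredient is a computation of the small-angle probability $\P(\ta_{12} \le \eta)$ as $\eta \to 0$, where $\ta_{12}$ is the relative angle between two independent directions, each with angular density $\pi$ on $C_d = [0,\pi]^{d-2}\times[0,2\pi)$ given by \eqref{e:angular.pdf}. By rotational invariance of the measure $\pi$ (it is the uniform distribution on the sphere $S^{d-1}$ in disguise), the distribution of $\ta_{12}$ equals that of the polar angle of a single uniform direction, whose density is proportional to $\sin^{d-2}\phi$, $\phi \in [0,\pi]$, with normalizing constant $\kappa_{d-2}$. Hence
\[
\P(\ta_{12} \le \eta) = \frac{1}{\kappa_{d-2}} \int_0^\eta \sin^{d-2}\phi\, d\phi \sim \frac{\eta^{d-1}}{(d-1)\kappa_{d-2}}, \qquad \eta \to 0,
\]
using $\sin^{d-2}\phi \sim \phi^{d-2}$. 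Applying this with $\eta = 2(1\pm\delta)\varepsilon_n$ gives $\P \sim \frac{(2(1\pm\delta))^{d-1}}{(d-1)\kappa_{d-2}}\, \varepsilon_n^{d-1} = \frac{(2(1\pm\delta))^{d-1}}{(d-1)\kappa_{d-2}}\, e^{-\zeta(d-1)(R_n-t_1-t_2)/2}$, and letting $\delta \to 0$ yields the claimed constant $2^{d-1}/((d-1)\kappa_{d-2})$.

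Finally I would verify uniformity: all the asymptotic relations above (the expansion in Lemma \ref{l:angle}, the smallness of $\varepsilon_n$, and the small-angle integral estimate) are uniform over $\{(t_1,t_2): t_1+t_2 \le R_n - \omega_n\}$ because the only $(t_1,t_2)$-dependence enters through $\varepsilon_n = e^{-\zeta(R_n-t_1-t_2)/2} \le e^{-\zeta\omega_n/2}$, and the error terms in Lemma \ref{l:angle} and in $\int_0^\eta \sin^{d-2}\phi\,d\phi = \eta^{d-1}/(d-1) + O(\eta^{d+1})$ are controlled by this uniform bound on $\varepsilon_n$. The main obstacle, as noted, is the non-uniformity of the Lemma \ref{l:angle} error term near the critical angle scale, which the $\delta$-sandwiching circumvents; everything else is a routine rotational-invariance reduction plus a one-dimensional integral asymptotic.
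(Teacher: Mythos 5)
Your proposal is essentially correct and follows the same path as the paper: reduce to the relative angle $\ta_{12}$ via the rotational invariance of the angular density (so that $\ta_{12}$ has density $\kappa_{d-2}^{-1}\sin^{d-2}\theta$ on $[0,\pi]$, exactly the paper's $\pi_{rel}$ in \eqref{e:relangle}), feed Lemma \ref{l:angle} into the event $\{d(X_1,X_2)\le R_n\}$ to turn it into a small-angle event on the scale $\varepsilon_n=e^{-\zeta(R_n-t_1-t_2)/2}$, and finish with the one-dimensional asymptotic $\int_0^{\eta}\sin^{d-2}\phi\,d\phi \sim \eta^{d-1}/(d-1)$.

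One point in your write-up is internally inconsistent and worth correcting, although it does not invalidate the argument. You first claim that at $\ta_{12}\approx 2\varepsilon_n$ the ratio $\hat\ta_{12}/\ta_{12}$ is ``$O(1)$, not $o(1)$'', so that Lemma \ref{l:angle} does not apply at the threshold; but in the parenthetical at the end of that paragraph you correctly state that $\hat\ta_{12}$ is of strictly smaller exponential order than $\varepsilon_n$. The second statement is the right one: a direct computation gives $\hat\ta_{12}^2/\varepsilon_n^2 = 2e^{-\zeta R_n}\cosh\bigl(\zeta(t_1-t_2)\bigr) \le e^{-\zeta\omega_n}(1+o(1)) \to 0$ uniformly on $\{t_1+t_2 \le R_n-\omega_n\}$, so $\hat\ta_{12}/\ta_{12}=o(1)$ uniformly once $\ta_{12}\gtrsim \varepsilon_n/\omega_n$. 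This is precisely the content of Claim 2.5 of \citep{fountoulakis:2012} that the paper invokes (it states $A_{12}^{-1}/(\omega_n\hat\ta_{12})\to\infty$ with $A_{12}^{-1}=\varepsilon_n$), and it is what allows the paper to simply split the integral at $\varepsilon_n/\omega_n$, absorb the small-$\ta_{12}$ contribution as $o(\varepsilon_n^{d-1})$, and apply Lemma \ref{l:angle} uniformly above the cutoff without any $\delta$-bracketing. Your $\delta$-sandwich is therefore unnecessary machinery introduced to work around a difficulty that is not actually present; it does, however, produce a valid alternative proof, and the uniformity discussion at the end of your proposal is sound.
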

\begin{proof}
First, let us denote $\Ta_{12}$ to be the relative angle between $\Ta_1$ and $\Ta_2$. Then, because of the uniformity of the angular density of $\Ta_i$ in \eqref{e:angular.pdf}, we can derive that the density $\pi_{rel}$ of $\Ta_{12}$, which is also the same as the conditional density of $\Ta_{12} | \Ta_1$, is given by 
\begin{equation}
\label{e:relangle}
\pi_{rel}(\ta) := (\kappa_{d-2})^{-1}\sin^{d-2} \theta, \, \ \theta \in [0,\pi].
\end{equation}
From the hyperbolic law of cosines, we know that the distance between $X_1$ and $X_2$ is determined by $t_1,t_2$ and their relative angle $\Ta_{12}$. Since $t_1,t_2$ are fixed, we can write
\begin{align*}
\P\bigl( \, d(X_1,X_2)\leq R_n \bigr) &= \int_{C_d^2} \one \bigl\{  \, d(u_1,u_2) \leq R_n \bigr\}\, \pi(\ta_1)\pi(\ta_2)\, d\ta_1 d\ta_2 \\
&= \frac{1}{\kappa_{d-2}}\, \int_0^\pi \one \bigl\{  \, d(u_1,u_2) \leq R_n \bigr\}\, \sin^{d-2} \ta_{12} d\ta_{12},
\end{align*}
for which we denote $u_i=(t_i,\ta_i)$, $i=1,2$, and $\ta_{12}$ is the relative angle between $u_1$ and $u_2$.
%
%
Now, we shall approximate the above integral. Let $A_{12} = e^{\zeta (R_n-t_1-t_2)/2}$. Claim 2.5 in \citep{fountoulakis:2012} proves that $A_{12}^{-1} / (\omega_n \hat \ta_{12}) \to \infty$ as $n\to\infty$. By virtue of Lemma \ref{l:angle}, along with $A_{12}^{-1} \to 0$ as $n\to\infty$, we have, on the set $\{ (t_1,t_2): t_1+t_2 \leq R_n-\omega_n \}$,
\begin{align*}
\int_{A_{12}^{-1}/\omega_n}^\pi \one \bigl\{\,  d(u_1,u_2) \leq R_n \, \bigr\}\, \sin^{d-2} \ta_{12}\, d\ta_{12} &\sim \int_{A_{12}^{-1}/\omega_n}^\pi \one \bigl\{\,  \sin \bigl(\frac{\ta_{12}}{2}\bigr) \leq A_{12}^{-1} \, \bigr\}\,\sin^{d-2} \ta_{12}\, d\ta_{12} \\
\sim \int_{A_{12}^{-1}/\omega_n}^\pi \one \{ \ta_{12} \leq 2 A_{12}^{-1} \}\, (\ta_{12})^{d-2}\, d\ta_{12} \, & \sim \, \frac{(2A_{12}^{-1})^{d-1}}{d-1}. 
\end{align*}
Therefore, 
\begin{align*}
\int_{0}^\pi \one \bigl\{\,  d(u_1,u_2) \leq R_n \, \bigr\}\, \sin^{d-2}\ta_{12}\, d\ta_{12} 
 &= o(A_{12}^{-(d-1)}) + \int_{A_{12}^{-1}/\omega_n}^\pi \one \bigl\{\,  d(u_1,u_2) \leq R_n \, \bigr\}\, \sin^{d-2}\ta_{12}\, d\ta_{12} \\
&\sim \frac{(2A_{12}^{-1})^{d-1}}{d-1}, \ \ \ n\to\infty
\end{align*}
as required. 
\end{proof}

We now present a crucial lemma that explains why the tree assumption is crucial to our asymptotics.

\begin{lemma} \label{l:int_angle_tree}
Let $\Gamma_k$ be a tree on $k$ vertices with edge set $E$. Let $X_1,\dots, X_k$ be iid random variables with common density $\rho_{n,\alpha} \times \pi$. Define $T_i=R_n-d(0,X_i)$, $i=1,\dots,k$, and write $\BT = (T_1,\dots,T_k)$. Then, it holds that 
$$
\P \bigl( \, d(X_i,X_j)\leq R_n, \ (i,j)\in E\, | \, \BT \bigr) = \prod_{(i,j)\in E} \P \bigl( \, d(X_i,X_j)\leq R_n \, |\, \BT \bigr) \ \ a.s.
$$
\end{lemma}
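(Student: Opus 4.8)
The plan is to condition on the radial parts $\BT = (T_1,\dots,T_k)$ and show that, given $\BT$, the events $E_{ij} := \{d(X_i,X_j) \leq R_n\}$ for $(i,j) \in E$ are mutually independent. The key observation is that, by the hyperbolic law of cosines (as used in Lemma~\ref{l:angle} and Lemma~\ref{l:int.angle}), for deterministic radial parts $t_i, t_j$, the event $E_{ij}$ depends only on $t_i$, $t_j$ and the \emph{relative angle} $\Ta_{ij}$ between the vectors $\overrightarrow{OX_i}$ and $\overrightarrow{OX_j}$; it does not depend on the angular parts $\Ta_i, \Ta_j$ in any other way. So, writing $\Ta_1,\dots,\Ta_k$ for the angular parts (which are iid with density $\pi$ and independent of $\BT$), it suffices to prove that the collection of relative angles $\{\Ta_{ij} : (i,j) \in E\}$ are mutually independent, each with the density $\pi_{rel}$ from \eqref{e:relangle}.

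\textbf{Key steps.} First I would reduce, as above, to a statement purely about the angular vectors: conditionally on $\BT$, it is enough to show
$$
\P\bigl( \Ta_{ij} \in A_{ij}, \ (i,j) \in E \bigr) = \prod_{(i,j) \in E} \P\bigl( \Ta_{ij} \in A_{ij} \bigr)
$$
for all Borel sets $A_{ij} \subseteq [0,\pi]$, together with the fact (already noted in the proof of Lemma~\ref{l:int.angle}) that each $\Ta_{ij}$ has density $\pi_{rel}$. Second, I would exploit the tree structure: root $\Gamma_k$ at an arbitrary vertex, say vertex $1$, and process vertices in an order $1 = v_1, v_2, \dots, v_k$ consistent with the rooted tree, so that each $v_m$ ($m \geq 2$) has a unique parent $p(v_m)$ already processed, and the edges of $\Gamma_k$ are exactly $\{(p(v_m), v_m) : m = 2,\dots,k\}$. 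The crucial angular fact is rotational invariance of the density $\pi$ on $C_d$: conditionally on $\Ta_{v_1}, \dots, \Ta_{v_{m-1}}$, the relative angle $\Ta_{p(v_m), v_m}$ between $\overrightarrow{OX_{v_m}}$ and the already-fixed direction $\overrightarrow{OX_{p(v_m)}}$ has density $\pi_{rel}$ and is conditionally independent of $\sigma(\Ta_{v_1},\dots,\Ta_{v_{m-1}})$ — this is because the distribution of $\Ta_{v_m}$ is invariant under the rotation of $\reals^d$ carrying $\overrightarrow{OX_{p(v_m)}}$ to a fixed reference axis, and relative to that axis the angle to $\Ta_{v_m}$ is then independent of everything in the past. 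Iterating this over $m = 2,\dots,k$ gives that $\Ta_{p(v_2),v_2}, \dots, \Ta_{p(v_k),v_k}$ are mutually independent, each with density $\pi_{rel}$. Third, I would unwind the reductions: combining the conditional independence of the relative angles with the fact that each $E_{ij}$ is a (measurable, $\BT$-dependent) event determined solely by $\Ta_{ij}$, we conclude
$$
\P\bigl( d(X_i,X_j) \leq R_n, \ (i,j) \in E \mid \BT \bigr) = \prod_{(i,j) \in E} \P\bigl( d(X_i,X_j) \leq R_n \mid \BT \bigr) \quad \text{a.s.},
$$
which is the claim.

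\textbf{Main obstacle.} The only genuinely non-routine point is the conditional-independence-of-relative-angles step, and specifically making rigorous the claim that in $d \geq 3$ the relative angle from a newly sampled uniform direction to a \emph{previously fixed} direction is independent of the full history of previously sampled directions. In $d = 2$ this is transparent (relative angles of iid uniform angular coordinates are iid uniform on a circle by translation invariance), but in higher dimensions one must argue via the $O(d)$-invariance of the angular measure $\pi$ and be careful that, although the relative angles $\Ta_{ij}$ for \emph{non-tree} pairs $(i,j)$ are certainly \emph{not} independent of the tree ones, the tree edges can be linearly ordered so that each new relative angle involves exactly one fresh, rotationally-averaged direction. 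The tree hypothesis is precisely what guarantees the existence of such an ordering (no cycle among the conditioned angles), and this is the content the lemma is designed to isolate. Once the rotational-invariance argument is set up, the rest is bookkeeping with conditional expectations.
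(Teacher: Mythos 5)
Your proposal is correct and rests on the same two ingredients as the paper's proof: the hyperbolic law of cosines reduces each edge event to the pair of radial coordinates and the relative angle, and rotational invariance of the angular density implies that the relative angle of a freshly sampled direction to any fixed direction has law $\pi_{rel}$, independently of the past. The only organizational difference is that the paper phrases the iteration as an induction on tree depth while conditioning on $\Ta_1$ (see equation~\eqref{e:uniformity.angular} and the subsequent tower-property computation, which is exactly your sequential conditioning step expressed one generation at a time), whereas you order the vertices linearly so that each step adds exactly one fresh vertex; both orderings exist for any tree and the two bookkeeping schemes are interchangeable.
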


\begin{proof}
Let $\Ta_i$ be the angular part of $X_i$, $i=1,\dots,k$. For the proof, it suffices to show that 
\begin{equation}  \label{e:induction}
\P \bigl( \, d(X_i,X_j)\leq R_n, \ (i,j)\in E\, | \, \Ta_1, \BT \bigr) = \prod_{(i,j)\in E} \P \bigl( \, d(X_i,X_j)\leq R_n \, |\, \BT \bigr) \ \ a.s.
\end{equation}
We again denote the relative angle between $\Ta_1, \Ta_2$ as $\Ta_{12}$. From \eqref{e:relangle}, we have that $\Ta_{12} \stackrel{d}{=} \Ta_{12} | \Ta_1$, i.e., the conditional distribution of  $\Ta_{12}$ conditioned on $\Ta_1$ is same as its unconditional distribution. From the hyperbolic law of cosines (see Lemma \ref{l:angle}), we know that $d(X_1,X_2)$ depends only on $T_1,T_2,\Theta_{12}$, and so, from the above equality of distributions, we obtain that
\begin{equation}  \label{e:uniformity.angular}
\P\bigl( \, d(X_i,X_j) \leq R_n\, | \, \Ta_i, \BT \bigr) = \P \bigl( \, d(X_i,X_j) \leq R_n\, | \, \BT \bigr) \ \ a.s.
\end{equation}
Suppose that $\Gamma_k$ is of depth $1$ rooted at vertex $1$. In this case, since $X_2,\dots,X_k$ are iid, 
\begin{align*}
\P \bigl( \, d(X_1,X_j) \leq R_n, \ j=2,\dots,k\, |\, \Ta_1, \BT \bigr) &= \prod_{j=2}^k \P\bigl( \, d(X_1,X_j)\leq R_n \, | \Ta_1, \BT) \\
&= \prod_{j=2}^k \P\bigl( \, d(X_1,X_j)\leq R_n \, |  \BT) \ \ a.s.
\end{align*}
Suppose, for induction, \eqref{e:induction} holds for any tree $\Gamma_k$ rooted at vertex $1$ with depth $M \geq 1$. Assume subsequently that $\Gamma_k$ is rooted at vertex $1$ with depth $M+1$. Let $2,\dots,m$ be the vertices connected to $1$, and $S_2,\dots,S_m$ the corresponding trees rooted at $2,\dots,m$. Let $E(S_\ell)$ be the edge set of $S_\ell$. Then, from the disjointness of the trees and independence of $X_i$'s, we have
\begin{align*}
\P \bigl( \, d(X_i,X_j) \leq R_n, \ (i,j)\in E\, |\, \Ta_1, \BT \bigr) 
& = \prod_{\ell=2}^m \P\bigl( \,  d(X_i,X_j) \leq R_n, \ (i,j)\in E(S_\ell)\, | \, \Ta_1, \BT \bigr) \ \ a.s.
\end{align*}
Now, an application of conditional expectations for each $\ell=2,\dots,m$, gives us the desired result as follows:
\begin{align*}
&\P\bigl( \,  d(X_i,X_j) \leq R_n, \ (i,j)\in E(S_\ell)\, | \, \Ta_1, \BT \bigr) \\
&= \E \Bigl[ \, \one\bigl\{ d(X_1,X_\ell) \leq R_n \bigr\}\, \P \bigl( \, d(X_i,X_j)\leq R_n, \ (i,j)\in E(S_\ell)\setminus \{(1,\ell) \}\, | \, \Ta_\ell, \BT \bigr) \bigl| \, \Ta_1, \BT \Bigr] \\
&= \E \Bigl[  \, \one\bigl\{ d(X_1,X_\ell) \leq R_n \bigr\} \prod_{(i,j)\in E(S_\ell)\setminus \{ (1,\ell) \}} \P \bigl(  \, d(X_i,X_j)\leq R_n\, |\, \BT \bigr) \bigl| \, \Ta_1,\BT  \Bigr] \\
&=\prod_{(i,j)\in E(S_\ell)} \P \bigl( \, d(X_i,X_j)\leq R_n\, | \, \BT \bigr) \ \ a.s., 
\end{align*}
where the induction hypothesis is applied for the second equality, and \eqref{e:uniformity.angular} is used for the third equality. 
\end{proof}
\subsection{Proofs of the expectation and variance results}
\label{sec:proofs_exp}
In what follows, since we plan to calculate moments for $\Tngamma$, we shall introduce some shorthand notations to save spaces. For $u_i \in B(0,R_n)$, $i=1,\dots,k$, we write
\begin{equation}
\label{eqn:gngamma}
\gngamma (u_1,\dots,u_k) := \one \bigl\{\, 0<d(u_i,u_j) \leq R_n,  \ (i,j)\in E, \ t_i \leq \gamma R_n, \ i=1,\dots,k\, \bigr\}, 
\end{equation}
and
\begin{equation*}
\hngamma (u_1,\dots,u_k) := \one \bigl\{\, 0<d(u_i,u_j) \leq R_n,  \ (i,j)\in E, \ t_i > \gamma R_n \ \text{for some }  i=1,\dots,k\, \bigr\},
\end{equation*}
where $t_i=R_n-d(0,u_i)$. 
\begin{proof}[Proof of Theorem \ref{t:expectation.Tngamma}]
Let $X_1,\ldots,X_k$ be iid random variables with density $\rho_{n,\alpha} \times \pi$ and set $T_i = R_n - d(0,X_i)$ as before. By an application of the Palm theory (Lemma \ref{l:palm1}),
\begin{align}
\E ( \Tngamma ) &= n^k \, \E\bigl( g_{n,\gamma}(X_1,\ldots,X_k)\bigr) \label{e:splitting}  \\
&=  n^k\,  \E \bigl[ g_{n,\gamma}(X_1,\ldots,X_k)  \prod_{(i,j) \in E} \one \{ T_i + T_j \leq R_n - \omega_n \} \bigr] \notag \\
&\qquad + n^k\, \E \bigl[ g_{n,\gamma}(X_1,\ldots,X_k)  \one\bigl\{ \cup_{(i,j) \in E} \{T_i+T_j > R_n-\omega_n\} \bigr\} \bigr]  \notag \\
&:= A_n + B_n, \notag
\end{align}
where $\omega_n=\log \log R_n$. From now, the argument aims to show that $A_n$ coincides asymptotically with the right-hand side of \eqref{e:expectation.Tngamma}.
It follows from the conditioning on the radial distances of $X_1,\dots,X_k$ from the boundary and Lemma \ref{l:int_angle_tree}  that 
\begin{equation*}
A_n = n^k \E \Bigl[\ \one \bigl\{ T_i +T_j \leq R_n-\omega_n, \ (i,j)\in E, \ T_i \leq \gamma R_n, \ i=1,\dots,k \bigr\} \prod_{(i,j)\in E} \P\bigl( \, d(X_i,X_j)\leq R_n\, |\, \BT\bigr) \Bigr],
\end{equation*}
where $\BT=(T_1,\dots,T_k)$. Setting $\bt = (t_1,\dots,t_k)$, we may write
\begin{equation}
A_n = n^k\, \ingam dt_1 \cdots \ingam dt_k\, \one \bigl\{ t_i + t_j \leq R_n-\omega_n, \ (i,j)\in E\, \bigr\} \prod_{(i,j)\in E} \P\bigl( d(X_i,X_j) \leq R_n\, |\, \bt \bigr) \, \barrhona (\bt), \label{e:int.An}
\end{equation}
where $\barrhona (\bt)$ denotes the product of densities in \eqref{e:barrho}:
$$
\barrhona (\bt) := \prod_{i=1}^k \barrhona (t_i). \ \ 
$$
Applying Lemma \ref{l:int.angle} on the set $\bigl\{ t_i+t_j \leq R_n -\omega_n, \ (i,j)\in E \bigr\}$, we have, as $n\to\infty$,
\begin{align*}
\prod_{(i,j)\in E} \P(d(X_i,X_j)\leq R_n\, |\, \bt) &\sim \prod_{(i,j)\in E} \frac{2^{d-1}}{(d-1)\kappa_{d-2}}\, e^{-\zeta (d-1) (R_n-t_i-t_j)/2} \\
&= \Bigl( \frac{2^{d-1}}{(d-1)\kappa_{d-2}} \Bigr)^{k-1} e^{-[(k-1)R_n  -  \sum_{i=1}^k d_it_i]\zeta (d-1)/2}
\end{align*}
Further, it follows from Lemma \ref{l:pdf} $(ii)$ that 
$$
\barrhona (\bt) \sim \alpha^k (d-1)^k e^{-\alpha (d-1) \sum_{i=1}^k t_i}, \ \ n\to\infty
$$
uniformly for $t_i \leq \gamma R_n$, $i=1,\dots,k$. Substituting these results into \eqref{e:int.An}, we obtain
\begin{equation}
\label{e.An}
A_n \sim \Bigl( \frac{2^{d-1}}{\kappa_{d-2}} \Bigr)^{k-1} \alpha^k (d-1) \, n^k e^{-\zeta(d-1)(k-1) R_n/2} \prod_{i=1}^k \angamma(d_i). 
\end{equation}

We now show that $B_n = o(A_n)$ as $n\to\infty$, unless $\gamma =1/2$. We only check the case in which there is an edge joining $X_1$ and $X_2$ such that $T_1+T_2 > R_n-\omega_n$, while all the other edges satisfy $T_i+T_j \leq R_n-\omega_n$. However, the following argument can apply in an obvious way, even when multiple edges satisfy $T_i + T_j >R_n-\omega_n$. Specifically, we shall verify 
\begin{align}
C_n &:= n^k\P \bigl( d(X_i,X_j) \leq R_n, \ (i,j)\in E, \ T_i \leq \gamma R_n, \ i=1,\dots,k, \label{e:term.Cn} \\
&\qquad \qquad \qquad T_1+T_2 > R_n-\omega_n, \ T_i + T_j \leq R_n-\omega_n, \ (i,j)\in E\setminus \{  (1,2)\}  \bigr) = o(A_n)\notag
\end{align}
Proceeding in the same way as the computation for $A_n$, while applying an obvious bound \\
$\one \bigl\{ d(X_1,X_2) \leq R_n \bigr\} \leq 1$, we see that 
\begin{align*}
C_n &\leq C^* n^k e^{-\zeta(d-1)(k-2)R_n/2} \prod_{i=3}^n \angamma (d_i) \\
&\quad \times \ingam \ingam e^{2^{-1}\zeta (d-1)\sum_{i=1}^2 (d_i - 1-2\alpha/ \zeta)t_i } \one \bigl\{ t_1+t_2 >R_n-\omega_n \bigr\}dt_1 dt_2.  
\end{align*}
By comparing this upper bound with the right-hand side of \eqref{e:expectation.Tngamma}, it suffices to show that 
\begin{align}
D_n &:= e^{\zeta(d-1) R_n/2}\ingam \ingam e^{2^{-1} \zeta(d-1) \sum_{i=1}^2 (d_i - 1-2\alpha /\zeta)t_i } \one \bigl\{ t_1+t_2 >R_n-\omega_n \bigr\}dt_1 dt_2 \label{e:close.boundary} \\
&= o \bigl( \angamma (d_1)\angamma (d_2) \bigr), \ \ n\to\infty. \notag
\end{align}
If $0<\gamma<1/2$, then $D_n$ is identically $0$; hence, we may restrict ourselves to the case $1/2 < \gamma <1$.
Without loss of generality, we may assume that $d_1 \geq d_2$, and rewrite $D_n$ as 
$$
D_n = e^{\zeta (d-1)R_n/2} \int_{(1-\gamma)R_n-\omega_n}^{\gamma R_n} e^{\zeta (d-1)(d_1 - 1 -2\alpha /\zeta)t_1/2} \int_{R_n-t_1-\omega_n}^{\gamma R_n} e^{\zeta (d-1)(d_2 - 1 -2\alpha /\zeta)t_2/2}dt_2 dt_1. 
$$
If $0 < 2 \alpha/\zeta< d_2-1$,
\begin{align*}
D_n & \leq C^* e^{ \zeta (d-1) [R_n + \sum_{i=1}^2 (d_i-1 -2\alpha /\zeta)\gamma R_n]/2} \\ 
&  = e^{ (1/2-\gamma)\zeta (d-1)R_n} O\bigl( \angamma (d_1)\angamma (d_2) \bigr) = o \bigl( \angamma (d_1)\angamma (d_2) \bigr). 
\end{align*}
On the contrary, let $2 \alpha /\zeta> d_2-1$. Then, 
\begin{align*}
D_n & \leq C^* e^{\zeta(d-1)[ (d_2-2\alpha /\zeta)R_n + (d_1-d_2)\gamma R_n]/2 } = o \bigl( \angamma (d_1)\angamma (d_2) \bigr).
\end{align*}
The same result can be obtained as well in the boundary case $2\alpha /\zeta= d_2-1$, and thus, we have proven \eqref{e:close.boundary}. Finally, if $\gamma=1/2$, the above calculations imply
$$
D_n = O \Bigl( \angamma(d_1) \angamma(d_2) \Bigr), \ \ n\to\infty,
$$
and thus, \eqref{e:exp.gamma.one.half} follows. 
\vspace{5pt}

Subsequently, we proceed to proving \eqref{e:expectation.Sn}. For $0<\gamma<1$, define $\Ungamma = S_n - \Tngamma$. Suppose $2 \alpha/\zeta > d_{(k)}$. Appealing to \eqref{e:expectation.Tngamma}, we obtain
$$
\E (  \Tngamma ) \sim \biggl( \frac{2^{d-1}}{(d-1)\kappa_{d-2}}\biggr)^{k-1} \alpha^k  \, \prod_{i=1}^k \bigl( \alpha -  \zeta d_i/2 \bigr)^{-1} n^k e^{- \zeta(d-1) (k-1)R_n/2},
$$
because, for every $i=1,\dots,k$, $\angamma (d_i)$ converges to $(d-1)^{-1}\bigl( \alpha  - \zeta d_i/2 \bigr)^{-1}$. Therefore, to complete our proof, it suffices to verify that  
\begin{equation}  \label{e:EUngamma}
\E ( \Ungamma )= o\bigl( n^k e^{-\zeta(d-1) (k-1)R_n/2} \bigr). 
\end{equation}
Another application of the Palm theory (Lemma \ref{l:palm1}) with $\omega_n=\log \log R_n$ yields,
\begin{align*}
\E ( \Ungamma ) &= n^k\, \E \bigl( \hngamma (X_1,\dots,X_k) \bigr) \\
&= n^k\, \E \bigl[ \hngamma (X_1,\dots,X_k) \prod_{(i,j)\in E}\one \{ T_i + T_j \leq R_n - \omega_n \} \bigr]  \\
&\qquad +  n^k\, \E \bigl[ \hngamma(X_1,\ldots,X_k)  \one \bigl\{ \cup_{(i,j) \in E} \{T_i+T_j > R_n-\omega_n\} \bigr\} \bigr]  \\
&:= A_n^{\prime} + B_n^{\prime}.
\end{align*}

We can calculate $A_n^\prime$ in almost the same manner as $A_n$; the only difference is that when handling $\barrhona (\bt)$, the present argument needs to apply the inequality in Lemma \ref{l:pdf} $(i)$, that is,
$$
\barrhona (\bt) \leq C^* \alpha^k (d-1)^k e^{-\alpha (d-1) \sum_{i=1}^kt_i} \ \text{uniformly for } 0<t_i<R_n, \ i=1,\dots,k.
$$
Consequently, we obtain
\begin{align*}
A_n^\prime &\leq C^* n^k e^{- \zeta(d-1)(k-1)R_n/2} \int_0^{R_n} dt_1 \cdots \int_0^{R_n} dt_k \\
& \qquad \qquad \times \one \{ \, t_i > \gamma R_n \ \text{for some } i=1,\dots,k\, \}\, e^{2^{-1} \zeta (d-1)\sum_{i=1}^k (d_i - 2\alpha/ \zeta)t_i} \\
&= o\bigl( n^k e^{- \zeta(d-1)(k-1)R_n/2} \bigr).
\end{align*}
Here, the second equality follows from the assumption that $d_i - 2\alpha /\zeta < 0$ for all $i=1,\dots,k$. Furthermore, we can show that $B_n^{\prime} = o \bigl( n^k e^{-\zeta(d-1)(k-1)R_n/2} \bigr)$, the proof of which is similar to the corresponding result for the derivation of \eqref{e:term.Cn} and \eqref{e:close.boundary}, so we omit it. Thus, we have \eqref{e:EUngamma} as needed to complete the proof of the theorem.
\end{proof}
\begin{proof}[Proof of Corollary \ref{cor:expectation.simple}]
The sub-tree counts $\Tngamma$ relating to a sub-tree with $\Dk = k-1$, $d_{(k-1)} = \cdots = \Done = 1$ is given by 
$$
\Tngamma = \sumXkPn \hspace{-10pt}\one \bigl\{ 0<d(X_1, X_i) \leq R_n, \ i=2,\dots,k, \ \ T_i \leq \gamma R_n, \ i=1,\dots,k \bigr\}.
$$ 
We also define $\Ungamma = S_n - \Tngamma$. First, \eqref{e:special.case.ESn} is a direct consequence of \eqref{e:expectation.Sn}, so we shall prove only (ii) and (iii). 

As for the proof of $(ii)$, we start with deriving a suitable upper bound for $\E ( \Ungamma)$. By the Palm theory (Lemma \ref{l:palm1}) and Lemma \ref{l:pdf} $(i)$, 
\begin{align*}
\E(\Ungamma ) &= n^k \, \P \bigl( d(X_1,X_i) \leq R_n, \ i=2,\dots,k, \ T_i > \gamma R_n \ \text{for some } i=1,\dots,k\, \bigr)\\
&\leq n^k k \, \P ( T_1 > \gamma R_n\, ) \leq C^* n^k e^{-\alpha (d-1) \gamma R_n}R_n. 
\end{align*}
Taking logarithm on both sides, we get
$$ \limsup_{n\to\infty} \frac{\log \E ( \Ungamma)}{R_n \vee \log n} \leq k(c\vee 1)^{-1} -\alpha (d-1) \gamma (1\vee c^{-1})^{-1}. $$
On the other hand, it follows from Theorem \ref{t:expectation.Tngamma} that 
\begin{align*}
\E (  \Tngamma ) &\sim C^* n^k e^{- \zeta(d-1)(k-1) R_n/2} \angamma (k-1) \sim C^* n^k e^{-\alpha (d-1) \gamma R_n - \zeta (d-1)(k-1) (1-\gamma) R_n/2},
\end{align*}
which implies that, as $n\to\infty$, 
$$
\frac{\log \E ( \Tngamma)}{R_n \vee \log n} \to k(c\vee 1)^{-1} - \alpha (d-1) \gamma (1\vee c^{-1})^{-1} - \zeta (d-1)(k-1)(1-\gamma)(1\vee c^{-1})^{-1}/2. 
$$
Moreover, if $\limsup_{n\to\infty} \E(\Ungamma) / \E(\Tngamma) < \infty$, 
$$
\limsup_{n\to\infty} \frac{\log \E (S_n)}{R_n \vee \log n} = \limsup_{n\to\infty} \frac{\log E(\Tngamma)}{R_n \vee \log n},
$$
and if $\limsup_{n\to\infty} \E(\Ungamma) / \E(\Tngamma) = \infty$,
$$
\limsup_{n\to\infty} \frac{\log \E (S_n)}{R_n \vee \log n} = \limsup_{n\to\infty} \frac{\log E(\Ungamma)}{R_n \vee \log n}.
$$
Therefore, using obvious inequalities
\begin{align*}
\liminf_{n\to \infty} \frac{\log \E ( \Tngamma )}{R_n\vee \log n} &\leq \liminf_{n\to\infty} \frac{\log \E (S_n)}{R_n \vee \log n} \leq \limsup_{n\to\infty} \frac{\log \E (S_n)}{R_n\vee \log n}\\
& \leq \limsup_{n\to\infty} \frac{\log \E ( \Tngamma )}{R_n\vee \log n} \vee \limsup_{n\to\infty} \frac{\log \E (\Ungamma )}{R_n\vee \log n},
\end{align*}
and letting $\gamma \nearrow 1$, we obtain 
$$ \frac{\log \E (S_n)}{R_n \vee \log n}  \to k(c\vee 1)^{-1} -\alpha (d-1) (1\vee c^{-1})^{-1}, \ \ \text{as } n\to\infty. $$
The proof of $(iii)$ is very similar to that of $(ii)$, so we omit it. 
\end{proof}

\begin{proof}[Proof of Theorem \ref{t:variance.Tngamma}]
We start by writing
\begin{align*}
\E \bigl[ (\Tngamma)^2 \bigr]&= \sum_{\ell =0}^k \E \Bigl[ \sum_{\X \in \mathcal P_{n,\neq}^k} \sum_{\X^\prime \in \mathcal P_{n,\neq}^k} \gngamma (\X)\, \gngamma(\X^\prime) \one \bigl\{ |\X \cap \X^\prime| = \ell \bigr\} \Bigr] := \sum_{\ell = 0}^k \E ( I_\ell ). 
\end{align*}
For $\ell = 0$, applying the Palm theory (Lemma \ref{l:palm1}),  
\begin{align*}
\E (I_0 ) &= n^{2k}\, \Bigl[ \E \bigl( \gngamma (\X)\bigr) \Bigr]^2 = \bigl[ \E ( \Tngamma ) \bigr]^2.
\end{align*}

Let $\Gamma^{(i,j)}_{2k-1}$ be a tree on $[2k-1]$ (meaning $|E| = 2k-2$) formed by taking two copies of $\Gamma_k$ and identifying the vertex of degree $d_i$ in one copy with the vertex of degree $d_j$ in the other copy. In other words, the degree sequence of $\Gamma^{(i,j)}_{2k-1}$ is $d_i + d_j, d_i, d_j$ and a pair of $d_\ell$'s for $\ell \in [k] \setminus \{i,j\}$. We first note that $I_1 =  \sum_{i,j=1}^k \C\bigl(\Gamma^{(i,j)}_{2k-1},HG^{(\gamma)}_n(R_n;\alpha,\zeta)\bigr)$, and so, from the identity for $E(I_0)$ above, we have that 
\begin{align}
\V(\Tngamma) &= \sum_{\ell =1}^k \E( I_\ell ) \geq \E ( I_1 ) \geq E\Bigl[\, \C\bigl(\Gamma^{(i^\prime, j^\prime)}_{2k-1},HG^{(\gamma)}_n(R_n;\alpha,\zeta)\bigr)\Bigr], \label{e:2k-1.term} 
\end{align}
where $i^\prime = j^\prime = (k)$, that is, $d_{i^\prime} = d_{j^\prime} = d_{(k)}$.  Therefore, applying Theorem \ref{t:expectation.Tngamma} to \eqref{e:2k-1.term}, we derive that

\begin{equation}  \label{e:lower1.variance}
\V(\Tngamma)  = \Omega \biggl(n^{2k-1} e^{-\zeta(d-1) (k-1)R_n} \angamma (2 \Dk) \prod_{i=1}^{k-1} \angamma (\Di)^2\biggr). 
\end{equation}
Note that due to \eqref{e:splitting} and \eqref{e.An}, the assumption $\gamma \neq 1/2$ is not required for the lower bound.  Similarly, by using the bound $I_k \geq \C\bigl(\Gamma_k,HG^{(\gamma)}_n(R_n;\alpha,\zeta)\bigr)$, we get
\begin{align*}
\V(\Tngamma) \geq \E ( I_k ) \geq  \E\Bigl[\, \C\bigl(\Gamma_k,HG^{(\gamma)}_n(R_n;\alpha,\zeta)\bigr)\Bigr] \label{e:k.term}.
\end{align*}
Thus, again from Theorem \ref{t:expectation.Tngamma}, we derive that
\begin{equation}  \label{e:lower2.variance}
\V(\Tngamma)= \Omega\biggl( n^k e^{-\zeta(d-1) (k-1)R_n/2} \prod_{i=1}^k \angamma (d_i)\biggr). 
\end{equation}
Finally, combining  \eqref{e:lower1.variance} and \eqref{e:lower2.variance} establishes \eqref{e:variance.Tngamma}.  

We now proceed to show \eqref{e:variance.Sn}. Assume that $\alpha  /\zeta > \Dk$, and for $0<\gamma <1$, write $\V(\Tngamma) = \sum_{\ell = 1}^k \E( I_\ell )$ as in \eqref{e:2k-1.term}. To derive exact asymptotics for $\V(\Tngamma)$, we shall derive exact asymptotics for $\E( I_1)$ and $\E( I_k)$ and also show that $\E(I_{\ell}) = o\bigl(\E( I_1) \vee \E( I_k)\bigr)$ for $2 \leq \ell \leq k-1$.

From the representation of $I_1$ before \eqref{e:2k-1.term} and Theorem \ref{t:expectation.Tngamma}, we have that 

\begin{align*}
\E ( I_1 ) &= \sum_{i,j=1}^k \, E \Bigl[ \, \C\bigl(\Gamma^{(i,j)}_{2k-1},HG^{(\gamma)}_n(R_n;\alpha,\zeta)\bigr) \Bigr]  \\
&\sim  C^* n^{2k-1} e^{-\zeta(d-1) (k-1)R_n} \sum_{i,j=1}^k \prod_{\ell = 1, \, \ell \neq i,j}^k \angamma (d_\ell)^2 \angamma (d_i+d_j) \angamma(d_i) \angamma (d_j). 
\end{align*}
However, due to the constraint $\alpha  /\zeta > \Dk $, we see that 
$$
\prod_{\ell = 1, \, \ell \neq i,j}^k \angamma (d_\ell)^2 \angamma (d_i+d_j) \angamma(d_i) \angamma (d_j)
$$
tends to a positive constant for all $i,j = 1,\dots,k$. Thus, we conclude that 
$$
\E ( I_1) \sim C^* n^{2k-1} e^{-\zeta(d-1) (k-1)R_n}, \ \ \text{as } n\to\infty.
$$
Similarly, we can verify that $\E(I_k) \sim C^* n^{k} e^{-\zeta (d-1) (k-1)R_n/2} \ \ \text{as } n\to\infty$. 

Subsequently, we investigate the rate of $\E(I_\ell)$ for $\ell =2,\ldots,k-1$. Similar to $I_1$, we can express $I_\ell$ as a sum of $\C\bigl(H,HG^{(\gamma)}_n(R_n;\alpha,\zeta)\bigr)$ over subgraphs $H$ on $[2k-\ell]$ formed by identifying $\ell$ vertices on two copies of $\Gamma_k$. Choosing a spanning tree $\Gamma_H$ of $H$, along with the monotonicity of $\C$, we get $\C\bigl(H,HG^{(\gamma)}_n(R_n;\alpha,\zeta)\bigr) \leq \C\bigl(\Gamma_H,HG^{(\gamma)}_n(R_n;\alpha,\zeta)\bigr)$. By the choice of $H$ and $\Gamma_H$,  the vertex degrees are necessarily smaller than $\alpha  / \zeta$.  Thus, from Theorem \ref{t:expectation.Tngamma}, we get that for every $\Gamma_H$, 
$$
\E \Bigl[\, \C\bigl(\Gamma_H,HG^{(\gamma)}_n(R_n;\alpha,\zeta)\bigr)\Bigr] \sim C^* n^{2k-\ell} e^{-\zeta (d-1)(2k -\ell-1)R_n/2}. 
$$
Now, we derive that
\[ \E(I_\ell) = O\bigl(n^{2k-\ell} e^{-\zeta(d-1) (2k-\ell-1)R_n/2}\bigr), \, \, \ell = 2,\ldots,k-1.\]
Note that, for every $n\geq1$, $g(m) = n^m e^{-\zeta(d-1)(m-1)R_n/2}$ is monotonic in $m \in \bbn_+$, so either $\E( I_1 )$ or $\E ( I_k )$ determines the actual growth rate of $\V(\Tngamma)$. Thus, we have that $\E(I_\ell) = o\bigl(\E( I_1) \vee \E( I_k)\bigr)$ for all $2 \leq \ell \leq k-1$, and we can conclude that 
$$ \V(\Tngamma)\sim C^* \Bigl[ n^{2k-1}e^{-\zeta(d-1)(k-1)R_n} \vee  n^k e^{-\zeta (d-1)(k-1)R_n/2}  \Bigr], \ \ n\to\infty. $$

Next, let $\Ungamma = S_n - \Tngamma$ for $0 < \gamma <1$. We can finish the proof, provided that 
\begin{equation}  \label{e:variance.Ungamma}
\V(\Ungamma) = o \bigl(  n^{2k-1}e^{-\zeta(d-1)(k-1)R_n} \vee  n^k e^{-\zeta(d-1) (k-1)R_n/2}  \bigr). 
\end{equation}
As in the proof for $\V(\Tngamma)$, we write
$$ 
\V(\Ungamma) = \, \E \bigl[ \sum_{\X \in \mathcal P_{n,\neq}^k} \sum_{\X^\prime \in \mathcal P_{n,\neq}^k} \hngamma (\X)\, \hngamma(\X^\prime)\, \one \bigl\{ |\X \cap \X^\prime| = \ell \bigr\} \bigr] :=\sum_{\ell=1}^k \E( J_\ell ). 
$$
Repeating the same argument as the proof of \eqref{e:EUngamma} in Theorem \ref{t:expectation.Tngamma}, we obtain that for all $\ell=1,\dots,k$,
$$
\E ( J_\ell ) = o \bigl(  n^{2k-\ell} e^{-\zeta (d-1)(2k-\ell-1)R_n/2}\bigr).
$$
Thus, \eqref{e:variance.Ungamma} follows. 
Moreover, by the Cauchy-Schwarz inequality, 
\begin{align*}
\bigl| \, \text{Cov}(\Tngamma, \Ungamma) \, \bigr| &\leq \sqrt{\V(\Tngamma) \V(\Ungamma)} = o \bigl(  n^{2k-1}e^{-\zeta(d-1)(k-1)R_n} \vee  n^k e^{-\zeta (d-1)(k-1)R_n/2}  \bigr),
\end{align*} 
from which we have
\begin{align*}
\V(S_n) &= \V(\Tngamma) + \V(\Ungamma) +2\text{Cov}(\Tngamma,\Ungamma) \\
&\sim C^*\Bigl[  n^{2k-1}e^{-\zeta(d-1)(k-1)R_n} \vee  n^k e^{-\zeta(d-1) (k-1)R_n/2} \Bigr], \ \ \ n\to\infty.
\end{align*}
\end{proof}
\subsection{Proof of the central limit theorem} \label{sec:proofs_clt}~

The proof relies on the normal approximation bound given by Theorem \ref{thm:last2016} below, which itself is derived from Malliavin-Stein method in \citep{last:peccati:schulte:2016}.  We shall first introduce the normal approximation bound and then use it to prove our central limit theorem. 

\subsubsection{Malliavin-Stein bound for Poisson functionals} 
\label{sec:Malliavin}~

Malliavin-Stein method has emerged as a crucial tool in proving limiting approximation results for Gaussian, Poisson, and Rademacher functionals. We introduce some notation and the result of use to us. For a more detailed introduction to this subject, see \citep{last:penrose:2017, peccati:reitzner:2016}.  

Let $\cP$ be a Poisson point process on a finite measure space $\BX$ with intensity measure $\lambda(\cdot)$. For a functional $F$ of Radon counting measures (i.e., locally-finite collection of points), define the {\em Malliavin difference operators} as follows : For $x \in \BX$, the {\em first order difference operator} is $D_xF := F(\cP \cup \{x\}) - F(\cP)$. The {\em higher order difference operators} are defined inductively as $D^\ell_{x_1,\ldots,x_\ell} F:= D_{x_\ell}(D^{\ell-1}_{x_1,\ldots,x_{\ell-1}}F)$. We require only the first and second order difference operators. The latter is easily seen to be 
\[ D^2_{x,y}F = F(\cP \cup \{x,y\}) - F(\cP \cup \{y\}) - F(\cP \cup \{x\}) + F(\cP), \]
for $x,y \in \BX$. We say that $F \in dom \, D$ if 
\[ \E\bigl(F(\cP)^2\bigr) < \infty, \, \, \, \E \Bigl[ \, \int_{\BX} (D_xF(\cP))^2 \lambda(\md x) \Bigr] < \infty .\]
\begin{theorem}(\citep[Theorems 1.1 and 6.1]{last:peccati:schulte:2016})
	\label{thm:last2016}
	Let $F \in dom \, D$ and let $N$ be a standard normal random variable. Define
	\begin{equation*}
	c_1 := \sup_{x \in \BX} \E \bigl( |D_xF|^5\bigr) \, \, ; \, \, c_2 := \sup_{x,y \in \BX} \E \bigl( |D^2_{x,y}F|^5 \bigr)
	\end{equation*}
	where these supremums are essential supremums with respect to $\lambda$ and $\lambda^2$ respectively. Then,
	\begin{eqnarray*}
	d_W\left(\frac{F-\E(F)}{\sqrt{\V(F)}},N\right) & \leq &  W_1 + W_2 + W_3, \\
	d_K\left(\frac{F-\E(F)}{\sqrt{\V(F)}},N\right) & \leq & W_1 + W_2 + W_3 + W_4 + W_5 + W_6, 
	\end{eqnarray*}
	where $W_1,\dots,W_6$ are defined as follows :
	\begin{eqnarray*}
		W_1 & = & \frac{2(c_1c_2)^{1/5}}{\V(F)} \left[ \int_{\BX^3} [ \P(D^2_{x_1,x_3}F \neq 0)\P(D^2_{x_2,x_3}F \neq 0) ]^{1/20} \lambda^3\bigl(\md (x_1,x_2,x_3)\bigr) \right]^{1/2} \\
		W_2 & = & \frac{c_2^{2/5}}{\V(F)} \left[ \int_{\BX^3} [ \P(D^2_{x_1,x_3}F \neq 0)\P(D^2_{x_2,x_3}F \neq 0) ]^{1/10} \lambda^3\bigl(\md (x_1,x_2,x_3)\bigr) \right]^{1/2} \\
		W_3 & = & \frac{1}{\V(F)^{3/2}} \int_\BX \E\bigl(|D_xF|^3\bigr) \lambda(\md x). \\
		W_4 & = & \frac{c_1^{3/5}\lambda(\BX)}{\V(F)^{3/2}} + \frac{c_1^{4/5}\lambda(\BX)^{5/4} + 2 c_1^{4/5}\lambda(\BX)^{3/2}}{\V(F)^2} \\
		W_5 & = & \frac{c_1^{2/5}\lambda(\BX)^{1/2}}{\V(F)} \\
		W_6 & = & \frac{ \sqrt{6}(c_1c_2)^{1/5}  +  \sqrt{3}c_2^{2/5}}{\V(F)} \left[ \int_{\BX^2}  \P(D^2_{x_1,x_2}F \neq 0) ]^{1/10} \lambda^2\bigl(\md (x_1,x_2)\bigr) \right]^{1/2}. \\ 
	\end{eqnarray*}
\end{theorem}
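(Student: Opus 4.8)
The statement is that of \citep[Theorems 1.1 and 6.1]{last:peccati:schulte:2016}, and the plan below recapitulates the structure of the Malliavin--Stein argument used there. Write $Y = (F - \E(F))/\sqrt{\V(F)}$, so $\E(Y)=0$ and $\V(Y)=1$. By Stein's method, $d_W(Y,N) \leq \sup |\E(g'(Y) - Yg(Y))|$, the supremum being over the solutions $g$ of the Stein equation $g'(y) - yg(y) = h(y) - \E(h(N))$ attached to test functions $h \in Lip(1)$; such $g$ satisfy $\|g'\|_\infty \leq \sqrt{2/\pi}$ and $\|g''\|_\infty \leq 2$. For $d_K$ one instead takes $h$ to be the indicator of $(-\infty,x]$, so the Stein solution is merely Lipschitz with a jump in its derivative, forcing a separate and more delicate treatment which is the origin of the extra terms $W_4, W_5, W_6$.

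Next I would bring in the Poisson Malliavin calculus: the difference operator $D$, the Ornstein--Uhlenbeck generator $L$ with pseudo-inverse $L^{-1}$, the divergence (Kabanov--Skorokhod) operator $\delta$, and the identities $F - \E(F) = \delta(-DL^{-1}F)$ and $\E(Yg(Y)) = \E(\langle D(g(Y)), -DL^{-1}Y\rangle_{L^2(\lambda)})$. Because $D$ is a difference rather than a derivative, I would Taylor-expand $D_x(g(Y)) = g'(Y)\,D_xY + \tfrac12 g''(\xi_x)(D_xY)^2$, obtaining $\E(Yg(Y)) = \E(g'(Y)\langle DY,-DL^{-1}Y\rangle) + R_1$ with $|R_1| \leq \tfrac{\|g''\|_\infty}{2}\,\E[\int_\BX (D_xY)^2|D_xL^{-1}Y|\,\lambda(\md x)]$. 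Since $\E(\langle DY,-DL^{-1}Y\rangle) = \V(Y) = 1$, Cauchy--Schwarz gives $|\E(g'(Y) - Yg(Y))| \leq \|g'\|_\infty\sqrt{\V(\langle DY,-DL^{-1}Y\rangle)} + |R_1|$. After writing $(D_xY)^2 = (D_xF)^2/\V(F)$ and bounding $|D_xL^{-1}Y|$ by a contraction of $|D_xY|$, the remainder $R_1$ yields precisely the term $W_3 = \V(F)^{-3/2}\int_\BX\E(|D_xF|^3)\,\lambda(\md x)$. The problem is thereby reduced to bounding $\V(\langle DY,-DL^{-1}Y\rangle)$ in terms of difference operators of $F$.

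This last bound is the technical heart, and the step I expect to be the main obstacle. Expanding $F = \sum_n I_n(f_n)$ into Wiener--It\^o chaos, the Mehler formula gives $-D_xL^{-1}F = \sum_{n\geq1} I_{n-1}(f_n(x,\cdot))$, and the product formula for multiple integrals expresses $\langle DY, -DL^{-1}Y\rangle$ and hence its variance through the contraction kernels $f_n\star_r^\ell f_m$. I would bound each contraction norm by integrals of the form $\int_{\BX^3}\E[(D^2_{x_1,x_3}F)^4]^{1/4}\E[(D^2_{x_2,x_3}F)^4]^{1/4}(\cdots)\,\lambda^3(\md(x_1,x_2,x_3))$ and $\int_{\BX^2}\E[(D^2_{x_1,x_2}F)^2]\,\lambda^2(\md(x_1,x_2))$, using the representation $f_n(x_1,\dots,x_n) = \tfrac{1}{n!}\E(D^n_{x_1,\dots,x_n}F)$ of the chaos kernels and the fact that iterated differences are controlled by second-order ones; this reproduces the raw bound of \citep[Theorem 1.1]{last:peccati:schulte:2016}. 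To reach the user-friendly form stated here, I would then apply H\"older's inequality to each moment factor, for instance $\E[(D^2_{x,y}F)^4] \leq c_2^{4/5}\,\P(D^2_{x,y}F\neq0)^{1/5}$ and $\E[(D^2_{x,z}F)^2(D^2_{y,z}F)^2] \leq c_2^{4/5}[\P(D^2_{x,z}F\neq0)\P(D^2_{y,z}F\neq0)]^{1/10}$, with an analogous bound for first-order differences involving $c_1 = \sup_x\E(|D_xF|^5)$; the relevant quarter and half powers then produce the exponents $1/20$ and $1/10$ and the prefactors $(c_1c_2)^{1/5}$, $c_2^{2/5}$ appearing in $W_1, W_2, W_6$.

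Finally, for $d_K$ the second derivative $g''$ is not bounded, so the crude remainder estimate fails; here I would follow \citep[Section 6]{last:peccati:schulte:2016} and exploit the special structure of the Stein solution attached to the indicator test function (effectively a truncation of the jump), running the same integration-by-parts and chaos-contraction estimates but keeping track of the extra error. That error is controlled by crude $L^1(\lambda)$ bounds on first-order differences of $F$ over all of $\BX$, which is why $W_4$ and $W_5$ carry the factors $\lambda(\BX)$, $\lambda(\BX)^{5/4}$, $\lambda(\BX)^{3/2}$ and $\lambda(\BX)^{1/2}$, while the additional contraction contribution is absorbed into $W_6$. Collecting $W_1+W_2+W_3$ for the Wasserstein bound and $W_1+\cdots+W_6$ for the Kolmogorov bound completes the proof.
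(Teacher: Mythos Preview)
Your proposal sketches the full Malliavin--Stein argument of \cite{last:peccati:schulte:2016}, and as an outline of that original proof it is broadly reasonable. However, the paper does \emph{not} reprove the theorem at all: it is a quoted result, and the only ``proof'' the paper supplies is a brief paragraph explaining how the specific form stated is extracted from \cite[Theorems 1.1 and 6.1]{last:peccati:schulte:2016}. Concretely, the paper notes that the Wasserstein bound of \cite[Theorem 1.1]{last:peccati:schulte:2016} has three terms; for the first two it uses the estimates appearing in the proof of \cite[Theorem 6.1]{last:peccati:schulte:2016} with the parameter choice $p_1=p_2=1$, leaves the third term unchanged, and for the Kolmogorov bound it simplifies the expressions of \cite[Theorem 6.1]{last:peccati:schulte:2016} via the trivial inequality $\P(D_xF\neq 0)\leq 1$. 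It also records the scaling identity $D^\ell_{x_1,\dots,x_\ell}\bigl((F-\E F)/\sqrt{\V(F)}\bigr)=D^\ell_{x_1,\dots,x_\ell}F/\sqrt{\V(F)}$.

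So the contrast is one of scope: you attempt to reconstruct the entire Malliavin--Stein machinery (Stein equation, integration by parts $F-\E F=\delta(-DL^{-1}F)$, Mehler formula, chaos contractions, H\"older to reach the $c_1,c_2$ and $\P(D^2F\neq 0)$ form), whereas the paper treats the theorem as a black box and only documents the bookkeeping that turns the cited bounds into the displayed $W_1,\dots,W_6$. Your approach buys a self-contained understanding but is far more work than the paper intends; the paper's approach is the appropriate one for a result that is being imported rather than established.
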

The Wasserstein bound in \citep[Theorem 1.1]{last:peccati:schulte:2016} contains three terms. For the first two terms, we have used the bounds in the proof of \citep[Theorem 6.1]{last:peccati:schulte:2016} with $p_1 = p_2 = 1$, and we have left the third term unchanged. In the bound for the Kolmogorov distance, we have used the trivial inequality $\P(D_xF \neq 0) \leq 1$ in the bounds of \citep[Theorem 6.1]{last:peccati:schulte:2016}. We have also used the fact that for any $\ell \geq 1$ and distinct $x_1,\ldots,x_\ell \in \BX$, 
\[ D^\ell_{x_1,\ldots,x_\ell}\left( \frac{F-\E(F)}{\sqrt{\V(F)}}\right) = \frac{D^\ell_{x_1,\ldots,x_\ell}F}{\sqrt{\V(F)}}. \]
\subsubsection{Some auxilliary lemmas and the proof of CLT in  Theorem \ref{t:clt.Tngamma}.}
\label{sec:lemma_clt_proof}

Having already derived variance bounds in Theorem \ref{t:variance.Tngamma}, we shall successively compute the remaining bounds in Theorem \ref{thm:last2016}. Once again, $C^*$ is a positive generic constant, which is independent of $n$. For $0 < \gamma < 1/2$, the constants given in Theorem \ref{thm:last2016} are
$$
c_{1,n} = \sup_{x\in D_\gamma(R_n)} \E\bigl[|D_x\Tngamma|^5 \bigr], \ \ \ \ \ c_{2,n} = \sup_{x, y \in D_\gamma(R_n)} \E\bigl[|D_{x,y}^2\Tngamma|^5 \bigr].
$$
\begin{lemma}
\label{lem:c1c2bound}
Let the assumptions of Theorem \ref{t:clt.Tngamma} hold, and $\gamma \in (0,1/2)$. Set $\rho_n := ne^{-\zeta(d-1) R_n/2}.$ For $p \geq 1$, define $\mathcal{G}_p^{(1)}$ and $\mathcal{G}_p^{(2)}$ respectively to be the set of all trees on $\{0\} \cup [p]$ and $\{-1,0\} \cup [p]$. For a tree 
$T \in \mathcal{G}_p^{(1)}$, let $d'_0, d'_1,\ldots, d'_p$ be the degree sequence, and similarly, for a tree $T \in \mathcal{G}_p^{(2)}$, let $d'_{-1},d'_0,\ldots,d'_p$ be the degree sequence. Then, we have the following bounds for the constants defined above :
\begin{align}
c_{1,n} &=  O(\rho_n^{5(k-1)} c'_{1,n}), \label{e.c1bound} \\
c_{2,n} &= O(\rho_n^{5(k-2)} c'_{2,n}), \label{e.c2bound}
\end{align}
where 
\begin{align*} 
c'_{1,n}&:= \max_{\substack{p = k-1,\ldots,5(k-1), \\ T \in \mathcal{G}_p^{(1)}}}  e^{\zeta (d-1) d_0^\prime \gamma R_n/2} \prod_{i=1}^p\angamma(d'_i), \\
c'_{2,n} &:= \max_{\substack{p = k-2,\ldots,5(k-2), \\ T \in \mathcal{G}_p^{(2)}}}e^{\zeta (d-1)(d_{-1}' + d_0')\gamma R_n/2}\prod_{i=1}^p\angamma(d'_i). 
\end{align*}
\end{lemma}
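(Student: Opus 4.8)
## Proof plan for Lemma \ref{lem:c1c2bound}

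The plan is to unwind the definitions of the difference operators acting on the homomorphism-counting functional $\Tngamma$ and to recognize each resulting quantity as a (weighted) sub-tree count on an enlarged vertex set, to which Theorem \ref{t:expectation.Tngamma} — or rather the intermediate estimate \eqref{e.An} that drives it — can be applied. Concretely, for a fixed point $x = (t_x, \theta_x) \in D_\gamma(R_n)$, the add-one cost $D_x \Tngamma$ counts those copies of $\Gamma_k$ in $HG_n^{(\gamma)}(R_n;\alpha,\zeta)$ that use the vertex $x$ at least once; grouping by which vertex-slots of $\Gamma_k$ are occupied by $x$, $D_x\Tngamma$ is a finite sum of terms, each of which is itself a homomorphism count $\C(\Gamma_k \setminus \{v\}, HG_n^{(\gamma)})$ with one vertex pinned at $x$ (and its neighbours in $\Gamma_k$ forced to lie within $R_n$ of $x$). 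Then $|D_x\Tngamma|^5$, after expanding the fifth power, is a sum of products of five such pinned counts; since all five factors share the common pinned point $x$, the union of the five (punctured) trees, glued at $x$, is a connected graph on at most $5(k-1)+1$ vertices containing $x$, and choosing a spanning tree of it (using the monotonicity $\C(H_2,G) \leq \C(H_1,G)$ for $E(H_1)\subset E(H_2)$) bounds the whole product above by $\C(T, HG_n^{(\gamma)})$ for some $T \in \mathcal{G}_p^{(1)}$, $p \in \{k-1,\ldots,5(k-1)\}$, with $x$ occupying the slot of degree $d_0'$.

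Next I would take expectations and invoke the Palm formula (Lemma \ref{l:palm1}) together with the computation already carried out in the proof of Theorem \ref{t:expectation.Tngamma}: pinning one vertex at $x=(t_x,\Theta_x)$ removes one integration $\ingam$ and one density factor, and replaces them by the deterministic contribution $e^{\zeta(d-1)d_0' t_x/2}$ coming from the $d_0'$ edges incident to $x$ (each contributing a factor $\sim \frac{2^{d-1}}{(d-1)\kappa_{d-2}} e^{-\zeta(d-1)(R_n - t_i - t_x)/2}$ via Lemma \ref{l:int.angle}). Collecting the $n$-powers: each of the $p$ free vertices carries a factor $n$ and each of the (at most) $5(k-1)$ edges a factor $e^{-\zeta(d-1)R_n/2}$, and since $T$ on $\{0\}\cup[p]$ has exactly $p$ edges with $p \geq k-1$, a bookkeeping of these exponents yields precisely $n^p e^{-\zeta(d-1)(p - d_0')R_n/2}\cdot(\text{stuff})$; rewriting $n^p e^{-\zeta(d-1)pR_n/2} = \rho_n^p$ and using $p \geq 5(k-1) \geq $ ... — more carefully, since each of the five factors in the expanded fifth power is a count of a $(k-1)$-vertex punctured tree, the total number of free vertices plus edges is controlled so that the $\rho_n$-power comes out to $\rho_n^{5(k-1)}$, with the residual $t_x$-dependence $e^{\zeta(d-1)d_0'\gamma R_n/2}$ (using $t_x \leq \gamma R_n$) and the residual integrals over the free radial coordinates producing exactly $\prod_{i=1}^p \angamma(d_i')$. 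Taking the supremum over $x$ and the maximum over $p$ and $T\in\mathcal{G}_p^{(1)}$ gives \eqref{e.c1bound}; the bound \eqref{e.c2bound} is entirely analogous, except the second-order operator $D^2_{x,y}$ pins \emph{two} points $x=(-1)$ and $y=(0)$ and annihilates copies of $\Gamma_k$ not using \emph{both} of them, so each of the five factors is now a count on a $(k-2)$-vertex twice-punctured structure, the free vertex count drops to $p \geq k-2$, the $\rho_n$-power to $\rho_n^{5(k-2)}$, and the deterministic factor becomes $e^{\zeta(d-1)(d_{-1}'+d_0')\gamma R_n/2}$.

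The main obstacle — and the only place real care is needed — is the combinatorial bookkeeping in the expansion of the fifth power: one must check that after gluing the five punctured trees at the common pinned vertex (or vertices) and passing to a spanning tree, the number of free vertices genuinely lands in the stated range $\{k-1,\ldots,5(k-1)\}$ (resp. $\{k-2,\ldots,5(k-2)\}$), that no new edges incident to $x$ are \emph{lost} when passing to the spanning tree in a way that would spoil the $e^{\zeta(d-1)d_0'\gamma R_n/2}$ factor, and that the monotonicity step is applied in the correct direction. A secondary subtlety is that the Palm/Lemma \ref{l:int.angle} estimates require the event $t_i + t_j \leq R_n - \omega_n$ on each edge; the complementary ``near-boundary'' contribution must be shown negligible, but this is handled exactly as the terms $B_n, C_n, D_n$ in the proof of Theorem \ref{t:expectation.Tngamma}, so I would simply cite that argument rather than repeat it. Once these points are in place, everything else is the same Palm-theory-plus-hyperbolic-metric computation already performed for the expectation, now with one or two vertices held fixed.
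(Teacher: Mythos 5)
Your proposal follows the same route as the paper: expand the fifth power of the $U$-statistic representation of $D_x\Tngamma$ (equivalently, your ``gluing five punctured trees at the pinned vertex'' is the paper's bookkeeping via surjections $\sigma \in \Sigma^*_{5(k-1),p}$), pass to a spanning tree $G'_\sigma$ by monotonicity of $\C$, and evaluate the resulting pinned sub-tree count by Palm calculus, Lemma~\ref{l:int.angle} and Lemma~\ref{l:pdf}, with the pinned vertex contributing the deterministic factor $e^{\zeta(d-1)d'_0 t_x/2}$.

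Two points where your write-up is slightly off, both easily repaired. First, your ``secondary subtlety'' about the near-boundary event $t_i+t_j>R_n-\omega_n$ does not arise here: because $\gamma<1/2$ and every radial coordinate is constrained to $[0,\gamma R_n]$, one automatically has $t_i+t_j\le 2\gamma R_n< R_n-\omega_n$, so Lemma~\ref{l:int.angle} applies on the whole domain and there is no residual term to dispatch. Citing the $B_n,C_n,D_n$ argument would be extra work, not an error, but it misses what the hypothesis $\gamma<1/2$ buys you. Second, and this is the genuine gap: you assert that the ``$\rho_n$-power comes out to $\rho_n^{5(k-1)}$'', but the Palm computation for a fixed $p$ and $\sigma$ yields $A_{n,p,\sigma}=O\bigl(\rho_n^{p}\,e^{\zeta(d-1)d'_0\gamma R_n/2}\prod_{i=1}^p\angamma(d'_i)\bigr)$ with $p$ ranging from $k-1$ up to $5(k-1)$; there is no reason the exponent of $\rho_n$ should be $5(k-1)$ for the smaller values of $p$. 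To upgrade $\rho_n^{p}$ to $\rho_n^{5(k-1)}$ uniformly over this range one must invoke the standing hypothesis of Theorem~\ref{t:clt.Tngamma} that $\rho_n\to c\in(0,\infty]$: if $\rho_n\to\infty$ then $\rho_n^{p}\le\rho_n^{5(k-1)}$ eventually, and if $\rho_n\to c<\infty$ all powers are of the same order, so $\rho_n^{p}=O(\rho_n^{5(k-1)})$ in either case. Without this step the stated factorization $c_{1,n}=O(\rho_n^{5(k-1)}c'_{1,n})$ does not follow from the termwise bound, and the same remark applies to $c_{2,n}$ with $5(k-2)$ in place of $5(k-1)$.
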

\begin{lemma} \label{lem:c3bound}
In the notation of Lemma \ref{lem:c1c2bound}, define 
$$
c_{3,n} := \int_0^{\gamma R_n}\int_{C_d} \E\bigl[|D_x\Tngamma|^3 \bigr] \barrhona(t) \pi(\ta)  \md \ta \md t
$$
(the point $x$ is represented in its hyperbolic polar coordinate $(t,\ta)$).  Then, for $0 < \gamma < 1/2$, 
\begin{equation*}
c_{3,n} = O(\rho_n^{3(k-1)} c'_{3,n}),
\end{equation*}
where
$$ 
 c'_{3,n} := \max_{\substack{p = k-1,\ldots,3(k-1), \\ T \in \mathcal{G}_p^{(1)}}} \prod_{i=0}^p \angamma(d'_i). 
$$
\end{lemma}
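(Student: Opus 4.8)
The plan is to control $c_{3,n}$ by expanding the cube of the first order difference operator $D_x\Tngamma$ and reducing each resulting term to the expected count of a suitable tree (or forest) via the Palm calculus, exactly in the spirit of the proof of Lemma \ref{lem:c1c2bound}. First I would recall the combinatorial meaning of $D_x\Tngamma$: adding a point $x$ to $\cP_n$ creates new copies of $\Gamma_k$ only when $x$ is one of the $k$ vertices of the copy, so $D_x\Tngamma = \sum_{i=1}^k \C_i(x)$, where $\C_i(x)$ counts copies of $\Gamma_k$ using $x$ as the vertex playing the role of the $i$-th vertex of $\Gamma_k$ (of degree $d_i$) and $k-1$ distinct points of $\cP_n$ for the remaining vertices, with all the distance and boundary constraints. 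In particular each $\C_i(x)$ is, after deleting the vertex $x$ and its incident edges, a sum over $(k-1)$-tuples of points of $\cP_n$ of indicators of a forest of constraints rooted at $x$; attaching $x$ back gives a tree on $\{0\}\cup[k-1]$ with $0$ having degree $d_i$.

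Next I would take the cube: $(D_x\Tngamma)^3 = \bigl(\sum_{i=1}^k \C_i(x)\bigr)^3$ expands into $O(1)$ terms, each a product $\C_{i_1}(x)\C_{i_2}(x)\C_{i_3}(x)$. Multiplying these out and grouping by which of the at most $3(k-1)$ auxiliary points coincide, one obtains a finite sum (with $n$-independent multiplicities) of terms of the form $\sum^{\neq} \prod (\text{edge and boundary indicators})$ over $p$-tuples of distinct points of $\cP_n$, for $p$ ranging over $k-1,\dots,3(k-1)$, where the underlying constraint graph, together with the point $x$ (playing vertex $0$), forms a connected graph on $\{0\}\cup[p]$. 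Replacing that graph by a spanning tree $T\in\mathcal{G}_p^{(1)}$ and using the monotonicity of $\C$ from \eqref{e:homrep} only enlarges the count, so each term is bounded by a tree count $\C(T, \cdot)$ with $x$ as vertex $0$. Now I would integrate over $x=(t,\ta)\in D_\gamma(R_n)$ against $\barrhona(t)\pi(\ta)$, take expectation, and apply the Palm theory (Lemma \ref{l:palm1}) together with Lemma \ref{l:int_angle_tree} and the metric/measure estimates (Lemmas \ref{l:angle}, \ref{l:int.angle}, \ref{l:pdf}), precisely as in the proof of Theorem \ref{t:expectation.Tngamma}: conditioning on the radial coordinates, the probability of the $p$ edge-events factorizes over $E(T)$, each edge contributes $C^* e^{-\zeta(d-1)(R_n - t_a - t_b)/2}$, and the radial densities contribute $\prod e^{-\alpha(d-1)t_a}$. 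Carrying out the $t$-integrals over $[0,\gamma R_n]$ produces a factor $n^{p+1} e^{-\zeta(d-1)pR_n/2}$ times $\prod_{i=0}^{p}\angamma(d'_i)$ (the contribution of the root $x=(t,\ta)$, with degree $d'_0$, also produces an $\angamma(d'_0)$ after the $t$-integral, since $x$ ranges over $D_\gamma(R_n)$ just like the other points); writing $n^{p+1}e^{-\zeta(d-1)pR_n/2} = \rho_n^{p}\cdot n = O(\rho_n^{3(k-1)})\cdot n$ when $\rho_n\to c\in(0,\infty]$ and $p\le 3(k-1)$, and absorbing the $n$ into $\rho_n^{3(k-1)}$ up to a bounded factor (using $\rho_n = n e^{-\zeta(d-1)R_n/2}$), yields $c_{3,n} = O(\rho_n^{3(k-1)} c'_{3,n})$ with $c'_{3,n}$ the stated maximum. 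The terms with $T_i+T_j > R_n-\omega_n$ are handled by the same near-boundary estimate used in \eqref{e:close.boundary} and contribute a lower order; since $\gamma<1/2$ these terms in fact vanish identically.

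The main obstacle is bookkeeping rather than conceptual: one must verify that after expanding the cube and collapsing coincident points, the resulting constraint graph on $\{0\}\cup[p]$ is connected (so that choosing a spanning tree makes sense and the root degree is well-defined) and that the degree of the root $0$ in the chosen spanning tree is one of the degrees $d'_0$ appearing in $\mathcal{G}_p^{(1)}$ — this is exactly where one uses that each $\C_i(x)$ is centered at $x$ and that in the product the three copies all share the vertex $x$. A secondary point requiring care is that, unlike in $c_{1,n}$ where the supremum over $x$ forces one to retain the factor $e^{\zeta(d-1)d_0'\gamma R_n/2}$ coming from evaluating at the worst-case radial coordinate of $x$, here $x$ is integrated against its true density $\barrhona$, so Lemma \ref{l:pdf}(ii) applies to the root coordinate as well and one genuinely gains the extra $\angamma(d'_0)$ factor instead; this is why $c'_{3,n}$ is a product $\prod_{i=0}^p\angamma(d'_i)$ with no boundary exponential, in contrast to $c'_{1,n}$. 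Finally, one checks that the number of expansion terms is bounded independently of $n$ and that the $\max$ over the finite set of trees dominates the whole sum, giving the claimed $O$-bound.
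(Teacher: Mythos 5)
Your overall strategy matches the paper's: expand $|D_x\Tngamma|^3$ via the $U$-statistic representation \eqref{e:diffustat1}, collapse coincident points to sums over $p$-tuples ($p=k-1,\ldots,3(k-1)$), bound the resulting connected constraint graph on $\{0\}\cup[p]$ by a spanning tree $G'_\sigma \in \mathcal{G}_p^{(1)}$, and then estimate the resulting tree count with the root $x$ integrated against its density as in Theorem \ref{t:expectation.Tngamma}. Your observation that the integral over $x$ lets Lemma \ref{l:pdf}(ii) act on the root coordinate, producing the extra factor $\angamma(d'_0)$ rather than the boundary exponential $e^{\zeta(d-1)d'_0\gamma R_n/2}$ that appears in $c'_{1,n}$, is exactly the right structural point.

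There is, however, a concrete bookkeeping error in the Palm step, and your attempted fix for it does not work. In $c_{3,n}$ the variable $x$ is integrated against the \emph{probability} density $\barrhona(t)\pi(\ta)\,\md t\,\md\ta$, not against the intensity measure $n\,\barrhona(t)\pi(\ta)\,\md t\,\md\ta$; compare this with $W_3$ in Theorem \ref{thm:last2016}, where the extra $n$ is carried separately. Consequently, applying Lemma \ref{l:palm1} to the $p$-tuple from $\cP_{n,\neq}^p$ and then integrating out $x$ turns the root into an $(p{+}1)$-st i.i.d.\ point $X_0$ with density $\rho_{n,\alpha}\times\pi$, giving a total factor of $n^p$ — not $n^{p+1}$. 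The final bound is therefore
$$
C_{n,p,\sigma}\ \leq\ C^*\, n^p\, e^{-\zeta(d-1)pR_n/2}\prod_{i=0}^p\angamma(d'_i)\ =\ C^*\,\rho_n^p\prod_{i=0}^p\angamma(d'_i)\ =\ O\Bigl(\rho_n^{3(k-1)}\prod_{i=0}^p\angamma(d'_i)\Bigr),
$$
using $\rho_n^p = O(\rho_n^{3(k-1)})$ for $p\leq 3(k-1)$ when $\rho_n\to c\in(0,\infty]$, and no ``absorbing'' is needed. Your step ``absorbing the $n$ into $\rho_n^{3(k-1)}$ up to a bounded factor'' would require $n = O(\rho_n^{3(k-1)-p})$, which fails (e.g.\ when $p=3(k-1)$, or when $\rho_n\to c<\infty$), so it is not a valid patch for the spurious $n$; the correct resolution is simply to remove it.
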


\begin{remark}  \label{rem:slow.growth}
\noindent (i) An essential observation here is that one can make the growth rate of $c_{i,n}'$'s as slow as one likes by choosing $\gamma$ small enough. 
To make this a little more clear, assume, for simplicity, that $2\alpha /\zeta$ is not an integer. Then, as $n\to\infty$, 
$$
c_{1,n}^\prime \sim \max_{\substack{p = k-1,\ldots,5(k-1), \\ T \in \mathcal{G}_p^{(1)}}} \prod_{j=1}^p\,  \Bigl| \, (d-1) (\alpha-\zeta d_j'/2) \, \Bigr|^{-1} e^{\zeta (d-1)\bigl[  d_0^\prime + \sum_{i=1}^p \bigl( d_i^\prime -2\alpha /\zeta \bigr)_+ \bigr] \gamma R_n /2}.
$$ 
Note also that $\rho_n\to c \in (0,\infty]$ implies $R_n = O (\log n)$. Therefore, for any $\epsilon >0$, there exists $\gamma_0>0$ such that for all $0<\gamma<\gamma_0$, we have that $c_{1,n}' = o(n^\epsilon)$. The same claim holds for $c_{2,n}', c_{3,n}'$ as well. \\

\noindent (ii) The separation of $c_{i,n}$'s into $c'_{i,n}$'s and $\rho_n$ terms is because $c'_{i,n}$'s depend on $\gamma$, whereas $\rho_n$ does not. \\

\noindent (iii) In many applications to euclidean stochastic geometric functionals (see \citep[Section 7]{last:peccati:schulte:2016}), $c_{1,n},c_{2.n}$ are actually shown to be bounded whereas in our case $c_{1,n},c_{2.n}$ can be unbounded and this is an important reason why obtaining optimal Berry-Esseen bounds in our normal approximation result will be challenging.
\end{remark}
 In what follows, we use $g = \gngamma$ (see \eqref{eqn:gngamma}). An important step in our proof of the above lemmas is that $\Tngamma$ is a $U$-statistic and its Malliavin derivatives have a very neat form as follows : For two distinct points $x, y \in \Bdzeta$, 
\begin{align}
D_{x}\Tngamma &=  \sum_{\ell =1}^k \sum_{(X_1,\ldots, X_{k-1}) \in \cP^{k-1}_{n,\neq}}g(\underset{\hspace{-10pt} \ell}{X_1,\ldots,x,\ldots, X_{k-1}}), \label{e:diffustat1} \\
D^2_{x, y}\Tngamma &=  \sum_{1\leq \ell_1 < \ell_2 \leq k} \sum_{(X_1,\ldots, X_{k-2}) \in \cP^{k-2}_{n,\neq}}g(\underset{\hspace{-10pt} \ell_1 \, \, \  \ \    \ \, \, \, \, \, \, \, \, \hspace{0pt}\ell_2}{X_1,\ldots,x,\ldots,y,\ldots,X_{k-2}}) \label{e:diffustat2} \\
&+ \sum_{1\leq \ell_2 < \ell_1 \leq k} \sum_{(X_1,\ldots, X_{k-2}) \in \cP^{k-2}_{n,\neq}}g(\underset{\hspace{-10pt} \ell_2 \, \, \  \ \    \ \, \, \, \, \, \, \, \, \hspace{0pt}\ell_1}{X_1,\ldots,y,\ldots,x,\ldots,X_{k-2}}), \notag
\end{align}
where $\ell_i$'s denote the positions of the corresponding coordinates. For a proof, see Lemma 3.5 in \citep{reitzner:schulte:2013}. 
\begin{proof}[Proof of Lemma \ref{lem:c1c2bound}]
Fix $x \in \Bdzeta$. For $p=k-1,\dots,5(k-1)$, let $\Sigma_{5(k-1),p}$ denote the set of all surjective maps from $[5(k-1)]$ to $[p]$.  
From \eqref{e:diffustat1}, we have that
\begin{align*}
\bigl| D_{x}\Tngamma\bigr|^5 & \leq  C^* \sum_{\ell =1}^k \Bigl(\sum_{(X_1,\ldots, X_{k-1}) \in \cP^{k-1}_{n,\neq}}g(\underset{\ell}{X_1,\ldots,x,\ldots, X_{k-1}})\Bigr)^5 \\
& =  C^* \sum_{\ell =1}^k \sum_{p=k-1}^{5(k-1)} \frac{1}{p!}\sum_{\sigma \in \Sigma_{5(k-1),p}}\sum_{(X_1,\ldots, X_p) \in \cP^{p}_{n,\neq}} \prod_{i=1}^5g\bigl(\underset{\hspace{25pt}\ell}{X_{\sigma((i-1)(k-1)+1)},\ldots,x,\ldots, X_{\sigma(i(k-1))}}\bigr). 
\end{align*}
It is possible that under some surjections $\sigma$, the coordinates in  
$\bigl(\sigma((i-1)(k-1)+1),\ldots,\sigma(i(k-1))\bigr)$ may repeat for some $i$, but in such cases, $g = 0$ by definition; thus, $\Sigma_{5(k-1), p}$ in the last expression can be replaced with 
$$
\Sigma_{5(k-1),p}^* = \bigl\{\sigma \in \Sigma_{5(k-1), p}: \sigma((i-1)(k-1)+1),\ldots,\sigma(i(k-1)) \text{ are distinct for all } i=1,\dots,5   \bigr\}.
$$
Now, let us fix $\ell = 1$, without loss of generality, and $p \in \{k-1,\ldots,5(k-1)\}$, $\sigma \in \Sigma_{5(k-1),p}^*$, and then, we shall bound 
\begin{align}
A_{n,p,\sigma} := \E\Bigl[ \sum_{(X,\ldots, X_p) \in \cP^{p}_{n,\neq}} \prod_{i=1}^5 g\bigl(x,X_{\sigma((i-1)(k-1)+1)},\ldots, X_{\sigma(i(k-1))} \bigr) \Bigr]. \label{e:Anpsigma}
\end{align}
Let $G_\sigma$ be a simple graph on $\{0\}\cup [p]$ with the edge-set defined as follows : for every $i=1,\dots,5$, define $\bigl(\sigma((i-1)(k-1)+j_1),\sigma((i-1)(k-1)+j_2) \bigr) \in E_{\sigma}$ if $(j_1+1,j_2+1)$ is an edge in $\Gamma_k$. Similarly we say that $\bigl(0,\sigma((i-1)(k-1)+j)\bigr) \in E_{\sigma}$ if $(1,j+1)$ is an edge in $\Gamma_k$. Then, setting $X_0 = x$, we have that $G_{\sigma}$ is the graph counted by the summand in \eqref{e:Anpsigma}, i.e.,
\begin{equation}
\label{e.graph_count}
\prod_{i=1}^5 g\bigl(x,X_{\sigma((i-1)(k-1)+1)},\ldots, X_{\sigma(i(k-1))} \bigr) = \prod_{(i,j) \in G_{\sigma}}\one \bigl\{ d(X_i,X_j) \leq R_n \bigr\}.
\end{equation}

Now, the surjectivity of $\sigma$ implies that $G_\sigma$ is connected, and thus, we can always find a spanning tree $G_{\sigma}'$ of $G_\sigma$ on $\{  0\}\cup [p]$. Let $d_0', \dots, d_p'$ be the degree sequence of $G_\sigma'$ and $E'_\sigma$ be the edge set of $G_\sigma'$.  By the definition of $A_{n,p,\sigma},G_{\sigma}, G'_{\sigma}$ together with the monotonicity of $\C$ and the above identity, we have that
\begin{align}
A_{n,p,\sigma} &= \E\Bigl[\, \C\bigl(G_{\sigma},HG^{(\gamma)}_n(R_n;\alpha,\zeta)\bigr)\Bigr] 
\leq \E\Bigl[\, \C\bigl(G_{\sigma}',HG^{(\gamma)}_n(R_n;\alpha,\zeta)\bigr)\Bigr] \label{e:Anpsigma.bound}\\
&= n^p \P \bigl(\, d(X_i,X_j) \leq R_n, \ (i,j)\in E'_\sigma, \ T_i \leq \gamma R_n, \, i=1,\dots,p, \, t_0 \leq \gamma R_n  \bigr),\notag
\end{align}
where $t_0=R_n-d(0,x)$ is deterministic, and the Palm theory (Lemma \ref{l:palm1}) is applied at the last equality. Note that $|E'_\sigma|=p$. 

Proceeding as in the derivation of \eqref{e.An}, while noting that $T_i+T_j\leq R_n-\omega_n$ always holds, since we are taking $\gamma<1/2$, we derive that 
\begin{align*}
A_{n,p,\sigma} &\leq n^p \ingam dt_1 \cdots \ingam dt_p\, \one \{ t_0\leq \gamma R_n \} \prod_{(i,j)\in E'_\sigma} \P \bigl( d(X_i,X_j)\leq R_n\, |\, \bt \bigr)\barrhona (\bt) \\
&\sim \Bigl( \frac{2^{d-1}}{\kappa_{d-2}} \Bigr)^p\, \alpha^p n^p e^{-\zeta (d-1)p R_n/2} e^{\zeta (d-1) d_0' t_0/2}\one \{ t_0\leq \gamma R_n \}\prod_{i=1}^p\angamma (d_i')\\
&= O \bigl( \rho_n^p e^{\zeta (d-1) d_0' \gamma R_n/2} \prod_{i=1}^p\angamma(d'_i) \bigr).
\end{align*}

Now, we may conclude that
\begin{equation}  \label{e:c1.bound}
c_{1,n}  =  O \biggl( \max_{\substack{p=k-1,\dots,5(k-1), \\T \in \mathcal{G}_p^{(1)}}} \rho_n^p e^{\zeta (d-1) d_0' \gamma R_n/2} \prod_{i=1}^p\angamma(d'_i) \biggr). 
\end{equation}
If $\rho_n \to \infty$, then clearly, $\rho_n^p = O(\rho_n^{5(k-1)})$ for all $k-1 \leq p \leq 5(k-1)$, and hence, the bound \eqref{e.c1bound} holds trivially. Else, $\rho_n \to c \in (0,\infty)$, but we can still easily get \eqref{e.c1bound}.
\vspace{10pt}

Now, we shall show the bound for $c_{2,n}$ in \eqref{e.c2bound}. 
For $p \in \{ k-2,\dots,5(k-2) \}$, let 
$$
\Sigma_{5(k-2),p}^* = \bigl\{\sigma \in \Sigma_{5(k-2), p}: \sigma((i-1)(k-2)+1),\ldots,\sigma(i(k-2)) \text{ are distinct for all } i=1,\dots,5   \bigr\}. 
$$
Then, the task of bounding $\E\bigl[ \bigl|D_{x,y}\Tngamma\bigr|^5\bigr]$ is again reduced to that of bounding 
$$
B_{n,p,\sigma} := \E\Bigl[ \sum_{(X_1,\ldots, X_p) \in \cP^{p}_{n,\neq}} \prod_{i=1}^5g\bigl( x,y, X_{\sigma((i-1)(k-2)+1)},\dots, X_{\sigma(i(k-2))} \bigr) \Bigr]
$$ 
for all $p \in \{k-2,\ldots,5(k-2)\}$ and $\sigma \in \Sigma_{5(k-2),p}^*$. 

Setting $X_{-1} = x, X_0 = y$ respectively, we can define a simple graph $G_\sigma$ on $\{ -1,0 \} \cup [p]$ as in \eqref{e.graph_count} such that  
$$
\prod_{i=1}^5g\bigl( x,y, X_{\sigma((i-1)(k-2)+1)},\dots, X_{\sigma(i(k-2))} \bigr) = \prod_{(i,j) \in G_{\sigma}}\one \bigl\{d(X_i,X_j) \leq R_n\bigr\}.
$$
Let $G_\sigma'$ be a spanning tree of $G_\sigma$, which exists as $G_\sigma$ is connected by the surjectivity of $\sigma$. Let $d^\prime_{-1},d^\prime_0,\ldots,d^\prime_{p}$ be the respective degrees of vertices $\{-1,0\} \cup [p]$ in $G'_{\sigma}$, and $E'_\sigma$ be its edge set. Note that $|E'_\sigma|=p+1$.

Then, setting $t_{-1} = R_n-d(0,x)$ and $t_0=R_n-d(0,y)$ (which are deterministic), we again derive that
\begin{align}
B_{n,p,\sigma} &\leq \E\Bigl[\, \C\bigl(G_{\sigma}',HG^{(\gamma)}_n(R_n;\alpha,\zeta)\bigr)\Bigr] \label{e:Bnpsigma}\\
&= n^p \P \bigl(\, d(X_i,X_j) \leq R_n, \ (i,j)\in E'_\sigma, \ T_i \leq \gamma R_n, \, i=1,\dots,p, \, t_{-1}, t_0 \leq \gamma R_n  \bigr).\notag
\end{align}

We basically proceed again as in the derivation of \eqref{e.An} by conditioning on the $T_i$'s, but here, we need to account for extra complication, i.e.,  whether $(-1,0) \in E'_{\sigma}$ or not. If $(-1,0) \in E'_\sigma$, then whether $X_{-1}=x$ and $X_0=y$ are connected is purely deterministic and the randomness arises only in the remaining $p$ edges. In that case, the rightmost term in \eqref{e:Bnpsigma} is bounded by 
\begin{align}
&n^p \ingam dt_1 \cdots \ingam dt_p\, \one \{ t_{-1}, t_0\leq \gamma R_n \} \prod_{(i,j)\in E'_\sigma\setminus \{(-1,0) \}} \hspace{-10pt}\P \bigl( d(X_i,X_j)\leq R_n\, |\, \bt \bigr)\barrhona (\bt) \label{e:Bnpsigma.A}\\
&\sim C^* n^p e^{-\zeta (d-1)p R_n/2+2^{-1} \zeta (d-1) [ (d_{-1}' -1)t_{-1} + (d_0' - 1)t_0 ]}  \one \{ t_{-1}, t_0\leq \gamma R_n \}\prod_{i=1}^p\angamma (d_i') \notag \\
&= O \bigl( \, \rho_n^p e^{\zeta (d-1) (d_{-1}'+d_0') \gamma R_n/2-\zeta(d-1)\gamma R_n} \prod_{i=1}^p\angamma(d'_i) \, \bigr).\notag
\end{align}

On the other hand, if $(-1,0)\notin E'_\sigma$, the randomness arises in all the $p+1$ edges of $E'_\sigma$. Then, 
the rightmost term in \eqref{e:Bnpsigma} is bounded by 
\begin{align}
&n^p \ingam dt_1 \cdots \ingam dt_p\, \one \{ t_{-1}, t_0\leq \gamma R_n \} \prod_{(i,j)\in E'_\sigma} \hspace{-10pt}\P \bigl( d(X_i,X_j)\leq R_n\, |\, \bt \bigr)\barrhona (\bt) \label{e:Bnpsigma.B}\\
&\sim C^* n^p e^{-\zeta (d-1)(p+1) R_n/2 + 2^{-1} \zeta (d-1) (d_{-1}'t_{-1}+d_0' t_0 )}  \one \{ t_{-1}, t_0\leq \gamma R_n \}\prod_{i=1}^p\angamma (d_i') \notag \\
&= O \bigl( \, \rho_n^p e^{-\zeta(d-1)R_n/2+\zeta (d-1) (d_{-1}'+d_0') \gamma R_n/2} \prod_{i=1}^p\angamma(d'_i) \, \bigr).\notag
\end{align}
Combining \eqref{e:Bnpsigma.A} and \eqref{e:Bnpsigma.B}, we conclude that
$$
B_{n,p,\sigma} = O \bigl( \, \rho_n^p e^{\zeta (d-1) (d_{-1}'+d_0') \gamma R_n/2} \prod_{i=1}^p\angamma(d'_i) \, \bigr).
$$
Now proceeding as in the derivation of \eqref{e.c1bound} (see below \eqref{e:c1.bound}), we get \eqref{e.c2bound}.
\end{proof}
\begin{proof}[Proof of Lemma \ref{lem:c3bound}]
From the same reasoning as Lemma \ref{lem:c1c2bound}, it suffices to bound 
\begin{equation*}  
C_{n,p,\sigma} := \int_0^{\gamma R_n}\int_{C_d}\E \Bigl[   \sum_{(X_1,\ldots, X_p) \in \cP^{p}_{n,\neq}} \prod_{i=1}^3g\bigl(x,X_{\sigma((i-1)(k-1)+1)},\ldots, X_{\sigma(i(k-1))}\bigr) \Bigr] \barrhona(t) \pi(\ta) \md \ta \md t
\end{equation*}
for every $p \in \{ k-1,\dots,3(k-1) \}$ and $\sigma \in \Sigma_{3(k-1),p}^*$. 

Let $G_\sigma$ be the same simple graph on $\{0\} \cup [p]$ as that constructed in the proof of Lemma \ref{lem:c1c2bound} $(i)$ (see \eqref{e.graph_count}), for which $x$ is identified as a vertex ``$0$". 
Once again, let $G'_{\sigma}$ be a spanning tree of $G_{\sigma}$, and $d_0',\dots, d_p'$ is the degree sequence and $E'_\sigma$ is the edge set of $G_\sigma'$. It follows from the monotonicity of $\C$ and the Palm theory that
\begin{align}
C_{n,p,\sigma} &\leq \E\Bigl[\, \C\bigl(G_{\sigma}',HG^{(\gamma)}_n(R_n;\alpha,\zeta)\bigr)\Bigr] \label{e:prob.c3} \\
&= n^p \P \bigl(\, d(X_i,X_j) \leq R_n, \ (i,j)\in E'_\sigma, \ T_i \leq \gamma R_n, \, i=0,\dots,p \, \bigr) \notag
\end{align}
For further calculation, we note that $X_0$ in \eqref{e:prob.c3} is random, whereas $X_0=x$ in \eqref{e:Anpsigma.bound} was purely deterministic. Taking into consideration such a difference and using Theorem \ref{t:expectation.Tngamma} with $k=p+1$, we have
$$
C_{n,p,\sigma} \leq C^* \rho_n^p \prod_{i=0}^p \angamma (d_i^\prime) = O \Bigl( \rho_n^{3(k-1)}  \prod_{i=0}^p \angamma (d_i^\prime)\Bigr).
$$
Finally, taking maximum, we can complete the proof.
\end{proof}

\begin{lemma}  \label{lem:W1W2bound}
Let the assumptions of Theorem \ref{t:clt.Tngamma} hold. For $0 < \gamma < 1/2$ and $0 < a < 1$, we have
\begin{align*}
& \int_{[0,\gamma R_n]^3 \times C_d^3} [ \P(D^2_{x_1,x_3}\Tngamma \neq 0)\P(D^2_{x_2,x_3}\Tngamma \neq 0) ]^a \prod_{i=1}^3\barrhona(t_i)\pi(\ta_i) \md t_i \md \ta_i = O\bigl( e^{-\zeta(d-1)(1-2\gamma)R_n}\bigr), \\
& \int_{[0,\gamma R_n]^2 \times C_d^2} [ \P(D^2_{x_1,x_2}\Tngamma \neq 0)]^a \prod_{i=1}^2\barrhona(t_i)\pi(\ta_i) \md t_i \md \ta_i =   O\bigl( e^{-\zeta(d-1)(1-2\gamma)R_n/2}\bigr),
\end{align*}
where we identify $x_i$ with their hyperbolic polar coordinates $(t_i,\ta_i).$ 
\end{lemma}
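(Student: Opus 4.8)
## Proof proposal for Lemma \ref{lem:W1W2bound}

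The plan is to estimate the probability $\P(D^2_{x,y}\Tngamma \neq 0)$ for a single pair of points $x=(t_1,\ta_1)$ and $y=(t_2,\ta_2)$ with $t_1,t_2 \leq \gamma R_n$, and then to feed this bound into the two integrals. The key structural observation is that $D^2_{x,y}\Tngamma \neq 0$ forces the existence of $k-2$ further points $X_1,\dots,X_{k-2}$ of $\cP_n$ such that, together with $x$ and $y$ inserted at two prescribed positions $\ell_1,\ell_2$ of the tree, all the edges of $\Gamma_k$ are realized; in particular $x$ and $y$ lie in a connected sub-structure. From the explicit form \eqref{e:diffustat2} of $D^2_{x,y}\Tngamma$, a union bound over the $O(1)$ choices of $\ell_1<\ell_2$ gives
\[
\P(D^2_{x,y}\Tngamma \neq 0) \leq C^* \sum_{\ell_1<\ell_2} \E\Bigl[ \sum_{(X_1,\dots,X_{k-2})\in\cP_{n,\neq}^{k-2}} g(\dots,x,\dots,y,\dots) \Bigr],
\]
using $\one\{A\neq 0\}\leq \E(A)$ for a nonnegative integer-valued $A$ — here $A = D^2_{x,y}\Tngamma$ restricted to a fixed pair $(\ell_1,\ell_2)$, which is indeed a nonnegative count. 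Each summand on the right is, after applying the Palm theory (Lemma \ref{l:palm1}), exactly $n^{k-2}$ times the probability that a fixed tree on $k$ vertices with $x,y$ pinned is realized among iid points; proceeding precisely as in the derivation of \eqref{e.An} in the proof of Theorem \ref{t:expectation.Tngamma} (note $t_i+t_j\leq R_n-\omega_n$ automatically for $\gamma<1/2$), and keeping track only of the factors that depend on $t_1,t_2$, this yields a bound of the form
\[
\P(D^2_{x,y}\Tngamma \neq 0) \leq C^* n^{k-2} e^{-\zeta(d-1)(k-1)R_n/2}\, e^{\zeta(d-1)(e_1 t_1 + e_2 t_2)/2} \prod_{i=1}^{k-2}\angamma(d'_i),
\]
where $e_1,e_2\geq 1$ are the degrees of the pinned vertices in the relevant tree and the exponent $-(k-1)R_n/2$ comes from the $k-1$ connection events. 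Since $\rho_n = ne^{-\zeta(d-1)R_n/2}$ and $\rho_n\to c\in(0,\infty]$ forces $R_n = O(\log n)$, and since $\angamma(p)\leq C^* e^{\zeta(d-1)(p-2\alpha/\zeta)_+ \gamma R_n/2}$, the whole prefactor is at most $e^{-\zeta(d-1)R_n/2}$ times a quantity that is $o(n^\epsilon)$ for $\gamma$ small (as in Remark \ref{rem:slow.growth}(i)); for the purpose of this lemma one only needs the crude form above with the $t_1,t_2$-dependence isolated.

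Next I would integrate. For the second integral, write $a \cdot (\text{bound})$ raised to the power $a\in(0,1)$ and integrate against $\barrhona(t_1)\barrhona(t_2)\,\pi(\ta_1)\pi(\ta_2)$; the angular integrals contribute $O(1)$, and by Lemma \ref{l:pdf}(i), $\barrhona(t_i) \leq C^* e^{-\alpha(d-1)t_i}$. The $t_i$-integral is then $\int_0^{\gamma R_n} e^{a\zeta(d-1)e_i t_i/2 - \alpha(d-1)t_i}\,dt_i$, which is at worst $C^* e^{a\zeta(d-1)e_i\gamma R_n/2}$; combined with the prefactor $\bigl(n^{k-2}e^{-\zeta(d-1)(k-1)R_n/2}\prod\angamma(d'_i)\bigr)^a$ and using $n\le C^* e^{\zeta(d-1)R_n/2}$ this is $e^{-\zeta(d-1)R_n/2 + O(\gamma R_n)}$, i.e.\ $O(e^{-\zeta(d-1)(1-2\gamma)R_n/2})$ once the admissible range of $e_i$ and the $\angamma$-exponents are absorbed into the $2\gamma$ slack (this is exactly the kind of ``$\gamma$-slack'' bookkeeping already used repeatedly in the paper; shrinking $\gamma$ only helps). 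For the first integral the argument is the same but with three points: $x_3$ appears in both factors $\P(D^2_{x_1,x_3}\Tngamma\neq 0)$ and $\P(D^2_{x_2,x_3}\Tngamma\neq 0)$, so after raising to the power $a$ and integrating out $\ta_i$ and $t_i$ one picks up \emph{two} independent copies of the $n^{k-2}e^{-\zeta(d-1)(k-1)R_n/2}$-type prefactor (one per factor), and $n^{2(k-2)}e^{-\zeta(d-1)(k-1)R_n}\le C^* e^{-\zeta(d-1)R_n}$ up to $o(n^\epsilon)$ and the $\angamma$-exponents; tracking the $\gamma$-dependence as before yields $O(e^{-\zeta(d-1)(1-2\gamma)R_n})$.

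The main obstacle is the bookkeeping in the step that isolates the $t_1,t_2$ (resp.\ $t_3$) dependence of $\P(D^2_{x,y}\Tngamma\neq 0)$: one must verify that, uniformly over the pinned positions $\ell_1,\ell_2$ and over the choice of spanning tree of the relevant counting graph, the exponent multiplying $t_i$ never exceeds $\zeta(d-1)e_i/2$ with $e_i$ the degree of the pinned vertex, and that the remaining $\angamma$-factors and the power of $n$ combine to give a prefactor of order $e^{-\zeta(d-1)R_n/2}$ (resp.\ $e^{-\zeta(d-1)R_n}$) times something subpolynomial in $n$ for $\gamma$ small — this is where one invokes $\rho_n\to c\in(0,\infty]\Rightarrow R_n=O(\log n)$ together with the estimate $\angamma(p)=O(e^{\zeta(d-1)(p-2\alpha/\zeta)_+\gamma R_n/2})$ from \eqref{e:angamma.special}-type reasoning. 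Everything else is a routine repetition of the expectation computation in Theorem \ref{t:expectation.Tngamma}, and the final $e^{-\zeta(d-1)(1-2\gamma)R_n}$ and $e^{-\zeta(d-1)(1-2\gamma)R_n/2}$ emerge by absorbing all the $O(\gamma R_n)$ exponential factors into the stated $2\gamma$ loss.
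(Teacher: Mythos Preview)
Your approach has a genuine gap that cannot be repaired by bookkeeping. The core problem is the Markov step $\P(D^2_{x,y}\Tngamma\neq 0)\le \E[D^2_{x,y}\Tngamma]$ followed by raising to the power $a\in(0,1)$. Even in the most favourable case $\rho_n\to c\in(0,\infty)$, your bound gives
\[
\P(D^2_{x,y}\Tngamma\neq 0)\le C^* e^{-\zeta(d-1)R_n/2}\cdot e^{O(\gamma R_n)},
\]
and after taking the $a$-th power this becomes $e^{-a\zeta(d-1)R_n/2+O(\gamma R_n)}$. Since the lemma must hold for every $a\in(0,1)$ (and in the application $a=1/20$ or $1/10$), the resulting integral is only $O\bigl(e^{-a\zeta(d-1)R_n/2+O(\gamma R_n)}\bigr)$, which is \emph{not} $O\bigl(e^{-\zeta(d-1)(1-2\gamma)R_n/2}\bigr)$: you lose a factor $a$ in the main exponent and no amount of shrinking $\gamma$ recovers it. Worse, when $\rho_n\to\infty$ your inequality ``$n\le C^* e^{\zeta(d-1)R_n/2}$'' is simply false, and then $\E[D^2_{x,y}\Tngamma]$ can exceed $1$, making Markov vacuous.

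The paper avoids this entirely by a \emph{deterministic} geometric argument that is insensitive to $a$. The key observation is that $D^2_{x_1,x_3}\Tngamma\neq 0$ forces $x_1$ and $x_3$ to be joined by a path of length at most $\mathrm{diam}(\Gamma_k)\le k$ through points of $D_\gamma(R_n)$; since each hop between points with $t_i\le\gamma R_n$ constrains the relative angle by at most $(1+o(1))2e^{-\zeta(1-2\gamma)R_n/2}$ (from Lemma~\ref{l:angle}), the triangle inequality on angles yields $\theta_{13}\le (1+o(1))2k\,e^{-\zeta(1-2\gamma)R_n/2}$. Hence
\[
\bigl[\P(D^2_{x_1,x_3}\Tngamma\neq 0)\bigr]^a \le \one\bigl\{\theta_{13}\le 2k\,e^{-\zeta(1-2\gamma)R_n/2}\bigr\}
\]
for \emph{every} $a>0$, because the left side is either $0$ or at most $1$. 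Integrating this indicator against the angular density \eqref{e:relangle} then gives the factor $e^{-\zeta(d-1)(1-2\gamma)R_n/2}$ directly, with no dependence on $\rho_n$ or $a$. You correctly noted that $x$ and $y$ must lie in a connected substructure, but the missing idea is to convert that connectivity into an angular constraint rather than into an expected count.
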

We need the following fact for the proof of the lemma. For $0 < \gamma <1/2$, let $HG(\X)$ be a hyperbolic geometric graph on a point set $\X \subset D_{\gamma}(R_n)$, connecting any two points within a distance $R_n$. Suppose that $y_1,y_2 \in \X$ are connected by a path of length $\ell \geq 1$ in $HG(\X)$, and their hyperbolic distances from the boundary given by $t_i = R_n-d(0,y_i)$, $i=1,2$ satisfy $t_1,t_2 \leq \gamma R_n$. For the relative angle $\ta_{12}$ between $y_1$ and $y_2$, we claim that 
\begin{equation}
\label{e.l-conn-pts}
\ta_{12} \leq \bigl(1 +o(1)\bigr) 2 \ell e^{-\zeta(1-2\gamma)R_n/2}, \ \ \ n\to\infty,
\end{equation}
uniformly for $t_1,t_2 \leq \gamma R_n$. 

The proof of \eqref{e.l-conn-pts} can be done inductively. For $\ell=1$, set $\hat \ta_{12} = \bigl( e^{-2\zeta (R_n-t_1)} + e^{-2\zeta (R_n-t_2)} \bigr)^{1/2}$ as in Lemma \ref{l:angle}. Since $t_1 + t_2 \leq 2\gamma R_n < R_n-\omega_n$, we get 
$$
\hat \ta_{12} =o\bigl(e^{-\zeta(R_n-t_1-t_2)/2}\bigr) \to 0, \ \ \  n\to\infty, 
$$
as in the proof of Lemma \ref{l:int.angle}. 

If $\ta_{12} \leq \hat \ta_{12}$, then $\ta_{12} \leq \bigl(1+o(1)\bigr)e^{-\zeta (1-2\gamma) R_n/2}$ and so \eqref{e.l-conn-pts} holds for $\ell=1$. If $\ta_{12}\gg \hat \ta_{12}$, Lemma \ref{l:angle} yields the following : Uniformly for $t_i \leq \gamma R_n$, $i=1,2$, we have that
\begin{align*}
R_n &\geq d(y_1,y_2) = 2R_n -(t_1+t_2) + \frac{2}{\zeta} \log \sin \Bigl(\frac{\ta_{12}}{2} \Bigr) + o(1) \\
&\geq 2(1-\gamma) R_n +  \frac{2}{\zeta} \log \sin \Bigl(\frac{\ta_{12}}{2} \Bigr) + o(1),  \ \ \ n\to\infty. 
\end{align*}
Equivalently, we have that, uniformly for $t_i \leq \gamma R_n$, $i=1,2$, 
$$
\ta_{12} \leq \bigl( 1 + o(1) \bigr) 2 e^{-\zeta(1-2\gamma)R_n/2}, \ \ \ n\to\infty.
$$
Hence, in either case, the claim for $\ell = 1$ follows. 

Now, suppose that the claim holds for $\ell-1$. Then, if $y_1,y_2$ have a path of length $\ell$, there exists a $y_0$ such that $y_1$ and $y_0$ have a path of length $\ell-1$, and $y_0$ and $y_2$ have a path of length $1$. Denoting the corresponding relative angles as $\theta_{10}$ and $\theta_{02}$, we see that 
$\theta_{12} \leq \theta_{10} + \theta_{02}$; hence, the proof can be completed by the induction hypothesis. 
\begin{proof}[Proof of Lemma \ref{lem:W1W2bound}]
Fix $x_1,x_2,x_3$ such that  $t_i \leq \gamma R_n$ for $i =1,2,3$. From \eqref{e:diffustat2},  $D^2_{x_1,x_3}\Tngamma \neq 0$ implies that there is a path of length at most $diam(\Gamma_k)$ (i.e., diameter of the graph) from $x_1$ to $x_3$ in the hyperbolic random geometric graph on $\bigl( \cP_n \cap D_{\gamma}(R_n) \bigr) \cup \{x_1,x_3\}$ with radius of connectivity $R_n$. Thus, from \eqref{e.l-conn-pts} and $diam(\Gamma_k) \leq k$, we have that 
$$
\ta_{13} \leq \bigl(1 +o(1) \bigr)2ke^{-\zeta(1-2\gamma)R_n/2}, \ \ \ n\to\infty.
$$
Therefore, 
\begin{align*}
&\int_{[0,\gamma R_n]^3 \times C_d^3} [ \P(D^2_{x_1,x_3}\Tngamma \neq 0)\P(D^2_{x_2,x_3}\Tngamma \neq 0) ]^a \prod_{i=1}^3\barrhona(t_i)\pi(\ta_i) \md t_i \md \ta_i \\
&\leq C^* \int_{[0,\gamma R_n]^3 \times C_d^3} \one \bigl\{  \ta_{j3} \leq 2ke^{-\zeta (1-2\gamma)R_n/2}, \, j=1,2 \bigr\} \prod_{i=1}^3\barrhona(t_i)\pi(\ta_i) \md t_i \md \ta_i \\
&= C^* \P \bigl( \Ta_{j3} \leq 2k e^{-\zeta (1-2\gamma)R_n/2}, \, j=1,2, \ T_i \leq \gamma R_n, \ i=1,2,3 \bigr),
\end{align*}
where $\Ta_{j3}$ denotes the relative angle between $X_j$ and $X_3$, $j=1,2$, and $T_i=R_n-d(0,X_i)$ for $i=1,2,3$.

Now, the probability of the last term equals 
\begin{equation}  \label{e:joint.rela.angle}
\int_{[0,\gamma R_n]^3} \prod_{j=1}^2 \P \bigl( \Ta_{j3} \leq 2ke^{-\zeta (1-2\gamma)R_n/2}\, | \, t_1,t_2,t_3 \bigr)\prod_{i=1}^3\barrhona(t_i)\, \md t_i. 
\end{equation}
Using the density \eqref{e:relangle} of a relative angle, it is easy to see that 
$$
\prod_{j=1}^2 \P \bigl( \Ta_{j3} \leq 2ke^{-\zeta (1-2\gamma)R_n/2}\, | \, t_1,t_2,t_3 \bigr) \sim \biggl( \frac{(2k)^{d-1}}{(d-1)\kappa_{d-2}} \biggr)^2 e^{-\zeta (d-1) (1-2\gamma) R_n}, \ \ n\to\infty,
$$
uniformly for $t_i \leq \gamma R_n$, $i=1,2,3$. 
It now follows from Lemma \ref{l:pdf} $(ii)$ that \eqref{e:joint.rela.angle} is asymptotically equal to 
$$
C^* e^{-\zeta(d-1) (1-2\gamma)R_n} \left(  \ingam e^{-\alpha (d-1)t} dt\right)^3 = O\bigl( e^{-\zeta (d-1) (1-2\gamma) R_n} \bigr). 
$$
This proves the first result in the lemma. By completely the same argument, we can get the second relation in the lemma. 
\end{proof}
We now put together all the bounds and prove our main central limit theorem. 
\begin{proof}[Proof of Theorem \ref{t:clt.Tngamma}]
In order to apply Theorem \ref{thm:last2016}, we take
$$ 
F = \Tngamma, \ \ \ \ \  \lambda(\md x) = n \barrhona(t) \pi(\ta)\one \{ t \leq \gamma R_n\}\, dt d\ta, 
$$ 
where we have, once again, represented $x$ in its hyperbolic polar coordinate $(t,\ta)$.   From \eqref{e:variance.Tngamma}, we have
$$
\V(\Tngamma) = \Omega \biggl( n \rho_n^{2(k-1)}\angamma(2d_{(k)}) \prod_{i=1}^{k-1} \angamma(d_{(i)})^2 \biggr).
$$
Relying on the lower bound for variance above, along with the bounds from Lemmas \ref{lem:c1c2bound}, \ref{lem:c3bound}, \ref{lem:W1W2bound}, and the definition of $\rho_n$, we obtain
\begin{align*}
W_1 &\leq C^* n^{-1/2} \frac{(c'_{1, n}c'_{2, n})^{1/5}e^{\gamma \zeta (d-1) R_n}}{\angamma(2d_{(k)}) \prod_{i=1}^{k-1} \angamma(d_{(i)})^2} , \\[5pt]
W_2 &\leq C^* n^{-1/2} \rho_n^{-1} \frac{(c'_{2, n})^{2/5}e^{\gamma \zeta (d-1) R_n}}{\angamma(2d_{(k)}) \prod_{i=1}^{k-1} \angamma(d_{(i)})^2} , \\[5pt]
W_3 &\leq C^* n^{-1/2} \frac{c'_{3,n}}{\angamma(2d_{(k)})^{3/2} \prod_{i=1}^{k-1} \angamma(d_{(i)})^3},
\end{align*}
and from Theorem \ref{thm:last2016}, we know that
\begin{equation*}
d_W\left(\frac{\Tngamma - \E(\Tngamma)}{\sqrt{\V(\Tngamma)}},N\right) \leq W_1 + W_2 + W_3.
\end{equation*}
By the definitions of $\angamma$, $c'_{1,n},c'_{2,n}$, and $c'_{3,n}$ (see Lemmas \ref{lem:c1c2bound} and \ref{lem:c3bound}), along with the claim in Remark \ref{rem:slow.growth}, we have that for any $a < 1/2$, we can choose $\gamma_0$ so small that $W_1 + W_2 + W_3 = O(n^{-a})$ as $n \to \infty$, for all $0 < \gamma < \gamma_0$. This proves the Wasserstein bound in  \eqref{e:dW.Tngamma}.

To show the Kolmogorov bound in \eqref{e:dW.Tngamma}, again using the bounds in Theorem \ref{thm:last2016}, along with Lemmas \ref{lem:c1c2bound}, \ref{lem:c3bound}, and \ref{lem:W1W2bound}, and the variance lower bound above, we derive that
\begin{align*}
W_4 &\leq  C^* n^{-1/2} \biggl[ \, \frac{(c'_{1,n})^{3/5}}{\angamma(2d_{(k)})^{3/2} \prod_{i=1}^{k-1} \angamma(d_{(i)})^3} +  \frac{(c'_{1,n})^{4/5}}{\angamma(2d_{(k)})^2 \prod_{i=1}^{k-1} \angamma(d_{(i)})^4} \, \biggr] , \\[5pt]
W_5 &\leq  C^* n^{-1/2} \frac{(c'_{1,n})^{2/5}}{\angamma(2d_{(k)}) \prod_{i=1}^{k-1} \angamma(d_{(i)})^2} , \\[5pt]
W_6 &\leq  C^* (n\rho_n)^{-1/2} \frac{e^{\gamma \zeta (d-1) R_n/2}}{\angamma(2d_{(k)}) \prod_{i=1}^{k-1} \angamma(d_{(i)})^2} \bigl((c'_{1,n}c'_{2,n})^{1/5} + \rho_n^{-1} (c'_{2,n})^{2/5}\bigr).
\end{align*}
From Theorem \ref{thm:last2016}, we have
\begin{equation*}
d_K\left(\frac{\Tngamma - \E(\Tngamma)}{\sqrt{\V(\Tngamma)}},N\right) \leq W_1 + W_2 + W_3 + W_4 + W_5 + W_6,
\end{equation*}
and hence, for any $a < 1/2$, we can choose $\gamma_0$ small enough such that the Kolmogorov bound in \eqref{e:dW.Tngamma} holds for all $0< \gamma < \gamma_0$. 

In order to show \eqref{e:clt.tn}, let us assume $\alpha / \zeta > d_{(k)}$. First, choose $0<\gamma<1/2$ such that 
$$
d_W\left(\frac{\Tngamma - \E(\Tngamma)}{\sqrt{\V(\Tngamma)}},N\right) \to 0, \ \ \text{as } n\to\infty.
$$ 
Setting $\Ungamma = S_n - \Tngamma$, we write
\[ \frac{S_n - \E(S_n)}{\sqrt{\V(S_n)}} = \sqrt{\frac{\V(\Tngamma)}{\V(S_n)}} \times \frac{\Tngamma - \E(\Tngamma)}{\sqrt{\V(\Tngamma)}}  + \frac{\Ungamma - \E(\Ungamma)}{\sqrt{\V(S_n)}}. \]
From \eqref{e:variance.Sn}, we have that $\V(\Tngamma) \sim  \V(S_n)$ as $n \to \infty$. Since the central limit theorem holds for $\Tngamma$, the first term converges in distribution to $N$ as $n \to \infty.$  Now, from \eqref{e:variance.Ungamma}, we know that $\V(\Ungamma) / \V(S_n) \to 0$ as $n \to \infty$, and hence, by Chebyshev's inequality, the second term converges to $0$ in probability. Thus, applying Slutsky's theorem, we obtain the central limit theorem for $S_n$ as required. 
\end{proof}

\section{{\bf Appendix}}
\label{sec:Appendix}

\subsection{Palm theory for Poisson point processes}
\label{sec:Palm}~\\

This result is known as the \textit{Palm theory} of Poisson point processes (see Section 1.7 in \citep{penrose:2003}), which is applied a number of times throughout the proof. 
\begin{lemma}
\label{l:palm1}
Let $X_1,X_2,\dots$ be $\bbr^d$-valued iid random variables with density $f$, and $\Pn = \{ X_1,\dots,X_{N_n} \}$ be the Poisson point process on $\bbr^d$, where $N_n$ is a Poisson random variable with mean $n$ and is independent of $(X_i)$.  Let $h, h_i : (\bbr^d)^k \to \bbr, i=1,2,\dots$ be bounded measurable functions vanishing on diagonals of $(\bbr^d)^k$, that is, $h(x_1,\dots,x_k) = h_i (x_1,\dots,x_k)=0$ whenever at least two of the $x_j$'s are equal. Then, 
\begin{equation} \label{e:palm1}
\E \Bigl[ \, \sum_{\X \in \mathcal P_{n,\neq}^k} h(\X) \Bigr] = n^k\, \E \bigl( h(X_1,\ldots,X_k) \bigr)
\end{equation}
where $\mathcal P_{n,\neq}^k$ is defined in \eqref{e:distinctpts}. 

Moreover, let $q\geq 2$ and $p \in \{ k, k+1,\dots, qk \}$ and $\Sigma_{qk, p}$ be a collection of surjective maps from $[qk]$ to $[p]$. Then,
\begin{align}
&\E \Bigl[ \ \sum_{\X_1 \in \mathcal P_{n,\neq}^k} \cdots \sum_{\X_q \in \mathcal P_{n,\neq}^k} \prod_{j=1}^q h_j(\X_j)\, \one \bigl\{ \, |   \cup_{i=1}^q\X_i | = p \bigr\} \Bigr] = \frac{n^p}{p!}\,  \sum_{\sigma \in \Sigma^*_{qk,p}} \E \Bigl[\  \prod_{j=1}^q h_j(X_{\sigma ( (j-1)k+1 )}, \dots, X_{\sigma (jk)})\, \Bigr], \label{e:palm2}
\end{align}
where $\Sigma^*_{qk,p}$ is a subset of $\Sigma_{qk,p}$ such that $\sigma((j-1)k + 1),\ldots,\sigma(jk)$ are distinct for all $j = 1,\ldots,q$.
\end{lemma}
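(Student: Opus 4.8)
The plan is to condition on the total number of points $N_n$, reduce each expectation to a purely combinatorial sum over tuples of indices in $[N_n]$, exploit the exchangeability of the iid sequence $(X_i)$, and finally average over $N_n$ using the factorial moments of the Poisson law, namely $\E\bigl[N_n(N_n-1)\cdots(N_n-k+1)\bigr] = n^k$ for $k\geq 1$.

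For \eqref{e:palm1} I would argue as follows. Conditionally on $N_n = m$, the inner sum is $\sum h(X_{i_1},\dots,X_{i_k})$ over ordered $k$-tuples $(i_1,\dots,i_k)$ of distinct elements of $[m]$, of which there are $(m)_k := m(m-1)\cdots(m-k+1)$ (this equals $0$ when $m<k$, consistent with $\mathcal P_{n,\neq}^k = \emptyset$). Since $h$ is bounded and the $X_i$ are iid, every such tuple contributes $\E\bigl(h(X_1,\dots,X_k)\bigr)$, so the conditional expectation is $(m)_k\,\E\bigl(h(X_1,\dots,X_k)\bigr)$. Boundedness of $h$ together with finiteness of all moments of $N_n$ justifies taking the expectation over $N_n$ inside, which gives $\E[(N_n)_k]\,\E\bigl(h(X_1,\dots,X_k)\bigr) = n^k\,\E\bigl(h(X_1,\dots,X_k)\bigr)$, as claimed.

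For \eqref{e:palm2}, condition again on $N_n=m$. A nonzero term on the left-hand side corresponds to an array of indices $a = (a_{j,\ell})_{1\le j\le q,\,1\le \ell\le k}\in[m]^{qk}$ such that (a) $a_{j,1},\dots,a_{j,k}$ are distinct for each $j$ and (b) $|\{a_{j,\ell}\}| = p$. Identifying the position $(j,\ell)$ with $(j-1)k+\ell$, such an array is determined by its equality pattern, which I would encode by a surjection $\sigma\colon[qk]\to[p]$ together with an injection $\iota\colon[p]\hookrightarrow[m]$ assigning an actual index to each block, via $a_{j,\ell} = \iota\bigl(\sigma((j-1)k+\ell)\bigr)$; condition (a) then amounts to $\sigma\in\Sigma^*_{qk,p}$, and (b) becomes automatic. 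The key point is that $S_p$ acts freely on such pairs by $(\sigma,\iota)\mapsto(\tau\circ\sigma,\,\iota\circ\tau^{-1})$ without altering $a$, and since $\sigma$ is surjective each valid array $a$ is the image of exactly one $S_p$-orbit, of size $p!$. Hence, conditionally on $N_n=m$, the left-hand side equals
\[
\frac{1}{p!}\sum_{\sigma\in\Sigma^*_{qk,p}}\ \sum_{\iota\colon[p]\hookrightarrow[m]}\E\Bigl[\,\prod_{j=1}^q h_j\bigl(X_{\iota(\sigma((j-1)k+1))},\dots,X_{\iota(\sigma(jk))}\bigr)\Bigr].
\]
Since $\iota$ is injective, $(X_{\iota(1)},\dots,X_{\iota(p)})\eid(X_1,\dots,X_p)$, so the inner expectation equals $\E\bigl[\prod_j h_j(X_{\sigma((j-1)k+1)},\dots,X_{\sigma(jk)})\bigr]$ independently of $\iota$, and there are $(m)_p$ injections. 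Averaging over $N_n$ and using $\E[(N_n)_p] = n^p$ then yields \eqref{e:palm2}; one may equally well sum over all of $\Sigma_{qk,p}$ rather than $\Sigma^*_{qk,p}$, since the additional terms vanish by the diagonal hypothesis on the $h_j$.

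I expect the main obstacle to be the combinatorial bookkeeping in the second part: making precise the correspondence between valid index arrays $a$ and $S_p$-orbits of pairs $(\sigma,\iota)$, so as to justify the factor $1/p!$, and verifying that the per-$j$ distinctness requirement is captured exactly by restricting to $\Sigma^*_{qk,p}$. The remaining ingredients — interchanging the finite sums with expectation (legitimate since the $h_j$ are bounded and $N_n$ has finite moments of every order) and evaluating the Poisson factorial moments — are routine.
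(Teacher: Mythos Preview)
Your proposal is correct and follows essentially the same route as the paper's proof: condition on $N_n$, encode the multi-sum via surjections $\sigma\in\Sigma_{qk,p}$ together with a choice of $p$ distinct indices from $[m]$, invoke exchangeability of the iid sequence, and close with the Poisson factorial-moment identity $\E[(N_n)_p]=n^p$. The only cosmetic differences are that the paper treats \eqref{e:palm1} as the special case $q=1$ of \eqref{e:palm2} rather than arguing it separately, and it states the combinatorial identity with the $1/p!$ factor directly, whereas you spell out the underlying $S_p$-orbit argument that justifies it.
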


\begin{proof}
Since \eqref{e:palm1} is a special case of \eqref{e:palm2}, we only prove the latter. For $p \in \{ k,\dots, qk \}$, we have
\begin{align*}
&\sum_{\X_1 \in \mathcal P_{n,\neq}^k} \cdots \sum_{\X_q \in \mathcal P_{n,\neq}^k} \prod_{j=1}^q h_j(\X_j)\, \one \bigl\{ \, |   \cup_{i=1}^q\X_i | = p \bigr\} \\
&= \frac{1}{p!}\, \sum_{\sigma \in \Sigma_{qk,p}} \sum_{(X_1,\dots,X_p)\in \mathcal P_{n,\neq}^p}\prod_{j=1}^q h_j(X_{\sigma ( (j-1)k+1 )}, \dots, X_{\sigma (jk)}). 
\end{align*}
Since $h_j$'s vanish on the diagonals of $(\bbr^d)^k$, one can replace $\Sigma_{qk,p}$  with $\Sigma_{qk,p}^*$. 

Conditioning on $N_n$, we have 
\begin{align*}
&\E \biggl[ \ \sum_{(X_1,\dots,X_p)\in \mathcal P_{n,\neq}^p}\prod_{j=1}^q h_j(X_{\sigma ( (j-1)k+1 )}, \dots, X_{\sigma (jk)}) \biggr] \\
& =  \sum_{m=p}^\infty \E \biggl[ \  \sum_{(X_1,\dots,X_p)\in \mathcal I_{m,\neq}^p}\prod_{j=1}^q h_j(X_{\sigma ( (j-1)k+1 )}, \dots, X_{\sigma (jk)})  \biggr]\, \frac{e^{-n}n^m}{m!}, 
\end{align*}
where $\mathcal I_m = \{ X_1,\dots,X_m \}$ and 
\begin{align*}
\mathcal I_{m,\neq}^p &= \bigl\{  (X_{i_1},\dots,X_{i_p}) \in \mathcal I_m^p: i_j \in \{1,\dots,m\}, \  i_j \neq i_\ell \ \text{for } j \neq \ell \bigr\}. 
\end{align*}
Since 
\begin{align*}
&\E \biggl[ \  \sum_{(X_1,\dots,X_p)\in \mathcal I_{m,\neq}^p}\prod_{j=1}^q h_j(X_{\sigma ( (j-1)k+1 )}, \dots, X_{\sigma (jk)})  \biggr]  = \frac{m!}{(m-p)!} \E \biggl[ \  \prod_{j=1}^q h_j(X_{\sigma ( (j-1)k+1 )}, \dots, X_{\sigma (jk)})  \biggr],
\end{align*}
the result follows by a simple calculation. 
\end{proof}

\subsection{Comparison to subgraph counts of Euclidean random geometric graphs}
\label{sec:comparison}~\\

In this section, we shall briefly sketch analogous asymptotic results for subgraph counts of random geometric graphs when the underlying metric is Euclidean. For simplicity, we shall restrict ourselves to the case $\alpha = \zeta = 1$ and $R_n = 2(d-1)^{-1}\log(n/\nu)$ for some $\nu > 0$ as in Section \ref{sec:specialcase}. In other words, we are considering the uniform distribution on the Poincar\'e ball, and further, by Corollary \ref{cor:specialcase}, we have that $\E(E_n) = \Theta(n)$, where $E_n$ denotes the number of edges in  $HG_n(R_n;1,1)$. Further, from the metric equivalence of the hyperbolic metric with the Euclidean metric on a compact ball of the Poincar\'e disk, the below asymptotics also give asymptotic growth rates for $HG_n(R;\alpha,\zeta)$ for any $\alpha, \zeta > 0$. 

There are two possible ways in which one can consider Euclidean analogues of our results in Corollary \ref{cor:specialcase}.
\begin{enumerate}
\item Consider Poisson($n$) points distributed uniformly in a sequence of growing Euclidean balls of radius $r_n$ ($r_n \to \infty$) and connect any two points within a distance $r_n$ - ({\em dense regime}). We shall call this graph $EG_{1,n}$. 
\item Consider Poisson($n$) points distributed uniformly in a sequence of Euclidean balls of radius $r_n$ and connect points within a distance $s_n$ such that the expected number of edges grows linearly in $n$ - ({\em thermodynamic regime}).  We shall call this graph $EG_{2,n}$.
\end{enumerate}
\begin{figure}[!htbp]
\centering
\caption{Simulations of $EG_{1,100}$ with $\pi r^2_{100} = 100$ and $EG_{2,500}$ with $\pi r^2_{500} = 500, s_{500} =1$ for $d = 2$}
\includegraphics[width=2.5in,height=2.4in]{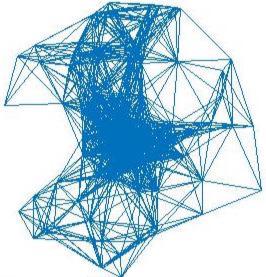} \hspace*{1.5cm}
\includegraphics[width=2.5in,height=2.4in]{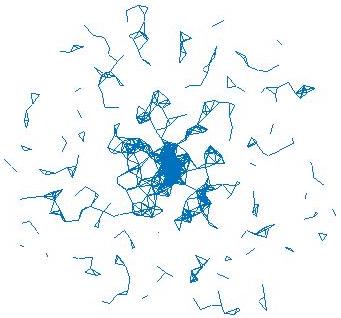} 
\label{fig:ergg}
\end{figure}
See Figure \ref{fig:ergg} for particular simulations of these two Euclidean graphs. Unlike hyperbolic random geometric graphs, these two regimes are distinct for Euclidean random geometric graphs, clarifying the choice of terminology for regimes (dense and thermodynamic). We shall only give a sketch of the calculations but refer the reader to \citep[Chapter 3]{penrose:2003} for details.

Fix $d \geq 2$ and let us denote the collection of Poisson($n$) points distributed uniformly in $B_{r_n}(0) \subset \bbr^d$ (i.e., the $d$-dimensional ball of radius $r_n$ centred at origin) as $\X_n$. Then, $EG_n(r_n,s_n)$ denotes the graph with vertex set $\X_n$ and edges between $X_i,X_j \in \X_n$ such that $|X_i - X_j| \leq s_n$, where $|\cdot|$ is the Euclidean metric. Under this notation, $EG_{1,n} = EG_n(r_n,r_n)$ and $EG_{2,n} = EG_n(r_n,s_n)$ for a suitable choice of $s_n$ satisfying a condition about linear growth of expected edges. Let $\Gamma$ be a connected graph on $k$ vertices,  and by $J_{i,n}(\Gamma), i=1,2$, we denote the number of copies of $\Gamma$ in $EG_{i,n}, i =1,2$, which can be defined similarly to sub-tree counts in \eqref{e:tngamma}. Now, by using the Palm formula for Poisson point processes (see Lemma \ref{l:palm1}), we have that
\[ \E(J_{1,n}(\Gamma)) = n^k \frac{C_1}{r_n^{dk}} \int_{ B_{r_n}(0)^k} \prod_{(i,j) \in \Gamma}\one\bigl\{|x_i-x_j| \leq r_n \bigr\}\md x_1 \ldots \md x_k = \Theta(n^k).\]
Since the order of $\E(J_{1,n}(\Gamma))$ is the same as that of the complete subgraph on $\X_n$, we call it a {\em dense regime}. The point we wish to observe is that the choice of $r_n$ and the degree sequence of the subgraph $\Gamma$ are irrelevant to the growth of $\E(J_{1,n}(\Gamma))$. Alternatively, only the number of vertices of $\Gamma$ determines the asymptotics. This is quite unlike the asymptotics for hyperbolic random geometric graphs in Corollary \ref{cor:specialcase}.

As in the above case, assuming $s_n = o(r_n)$, we can derive that $\E(J_{2,n}(\Gamma)) = \Theta\bigl(n^k (\frac{s_n}{r_n})^{d(k-1)}\bigr)$. Thus, the expected number of edges in $EG_{2,n}$ (i.e., $\E(J_{2,n}(K_2))$, where $K_2$ is the connected graph on two vertices) is $\Theta\bigl(n^2(\frac{s_n}{r_n})^d\bigr)$, and so, if we choose $s_n = n^{-1/d}r_n$ with $r_n \to r \in (0,\infty]$, we get that $\E(J_{2,n}(K_2)) = \Theta(n)$. This is called the thermodynamic regime (see \citep[Chapter 3]{penrose:2003}), since the expected average degree (or empirical count of neighbours) is asymptotically constant. This is true for the hyperbolic random geometric graph for the regime of Corollary \ref{cor:specialcase}. In contrast to the hyperbolic random geometric graph, we see from the above calculation that the asymptotics of $\E(J_{2,n}(\Gamma))$ is again independent of the degree sequence of the subgraph $\Gamma$ and the choice of $r_n$. 

In conclusion, either of the Euclidean analogues to the hyperbolic random geometric graph are markedly different in the sense that the degree sequence of the subgraph count does not affect the asymptotic first order growth. Though we do not discuss second order or more finer results, one can find them in \citep[Chapter 3]{penrose:2003} for Euclidean random geometric graphs or  in \citep{penrose2013,bobrowski:mukherjee:2015} for random geometric graphs on compact manifolds, and the broad message remains unchanged. 

\section*{Acknowledgements} The work benefitted from the visit of both the authors' to Technion, Israel and the authors are thankful to their host Robert Adler for the same. DY is also thankful to Department of Statistics at Purdue University for hosting him.  DY also wishes to thank Subhojoy Gupta for some helpful discussions on hyperbolic geometry. 

\bibliographystyle{plain}
\bibliography{HGG}


\end{document}